\pgfplotsset{compat=newest}
\pgfplotsset{plot coordinates/math parser=false}
\newlength\figureheight
\newlength\figurewidth
\newcommand{\R}{\mathbb{R}}
\def\1{\raisebox{2pt}{\rm{$\chi$}}}
\newcommand{\Lip}{\operatorname{Lip}}
\theoremstyle{plain}
\newtheorem{definition}{Definition}[section]
\newtheorem{proposition}{Proposition}[section]
\newtheorem{property}{Property}[section]
\newtheorem{theorem}{Theorem}[section]
\newtheorem{corollary}{Corollary}[section]
\newtheorem{lemma}{Lemma}[section]
\newtheorem{remark}{Remark}[section]
\theoremstyle{definition}
\newtheorem{example}{Example}
\theoremstyle{remark}
\numberwithin{equation}{section}
\begin{document}

\title[The inverse problem for Hamilton-Jacobi equations]{The inverse problem for Hamilton-Jacobi equations and semiconcave envelopes}

\begin{abstract}
We study the inverse problem, or inverse design problem, for a time-evolution Hamilton-Jacobi equation.
More precisely, given a target function $u_T$ and a time horizon $T>0$, we aim to construct all the initial conditions for which the viscosity solution coincides with $u_T$ at time $T$.
As it is common in this kind of nonlinear equations, the target might not be reachable.
We first study the existence of at least one initial condition leading the system to the given target.
The natural candidate, which indeed allows determining the reachability of $u_T$, 
is the one obtained by reversing the direction of time in the equation, considering $u_T$ as terminal condition.
In this case, we use the notion of backward viscosity solution, that provides existence and uniqueness for the terminal-value problem.
We also give an equivalent reachability condition based on a differential inequality, that 
relates the reachability of the target with its semiconcavity properties.
Then, for the case when $u_T$ is reachable, we construct the set of
all initial conditions for which the solution coincides with $u_T$ at time $T$.
Note that in general, such initial conditions are not unique. 
Finally, for the case when the target $u_T$ is not necessarily reachable,
we study the projection of $u_T$ on the set of reachable targets,
obtained by solving the problem backward and then forward in time.
This projection is then identified with the solution of a fully nonlinear obstacle problem,
and can be interpreted as the semiconcave envelope of $u_T$,
i.e. the smallest reachable target bounded from below by $u_T$.
\end{abstract}

\author{Carlos Esteve}
\thanks{AMS 2020 MSC: 35F21, 35F25, 35J70, 49L25 \\
\textit{Keywords: Hamilton-Jacobi equation, inverse design problem, semiconcave envelopes, obstacle problems} \\
\textbf{Funding:} This project has received funding from the European Research Council
(ERC) under the European Union’s Horizon 2020 research and
innovation programme (grant agreement No 694126-DYCON). \\
The second author has received funding from
Transregio 154 Project *Mathematical Modelling, Simulation and Optimization using the Example of Gas Networks* of the German  DFG}
\address {Carlos Esteve \newline \indent
{Departamento de Matem\'aticas, \newline \indent
Universidad Aut\'onoma de Madrid},
\newline \indent
{28049 Madrid, Spain}
\newline \indent \hspace{2.5cm} \text{and} \newline \indent
{Chair of Computational Mathematics, Fundaci\'on Deusto}
\newline \indent
{Av. de las Universidades, 24}
\newline \indent
{48007 Bilbao, Basque Country, Spain}
}
\email{\texttt{carlos.esteve@uam.es}}

\author{Enrique Zuazua}
\address{Enrique Zuazua \newline \indent
{Chair in Applied Analysis, Alexander von Humboldt-Professorship
\newline \indent
Department of Mathematics, \newline \indent
Friedrich-Alexander-Universit\"at Erlangen-N\"urnberg}
\newline \indent
{91058 Erlangen, Germany}
\newline \indent \hspace{2.5cm} \text{and} \newline \indent
{Chair of Computational Mathematics, Fundación Deusto}
\newline \indent
{Av. de las Universidades, 24}
\newline \indent
{48007 Bilbao, Basque Country, Spain}
\newline \indent \hspace{2.5cm} \text{and} \newline \indent
{Departamento de Matemáticas, \newline \indent
Universidad Autónoma de Madrid,}
\newline \indent
{28049 Madrid, Spain}
}
\email{\texttt{enrique.zuazua@fau.de}}

\date{\today}

\maketitle

\section{Introduction}

We consider the initial-value problem for a Hamilton-Jacobi equation of the form
\begin{equation}\label{HamJac eq}
\left\{ \begin{array}{ll}
\partial_t u + H(D_x u) = 0, & \text{in} \ [0,T]\times \R^n, \\
\noalign{\vskip 1.5mm}
u(0,x) = u_0(x), & \text{in} \ \R^n,
\end{array}\right.
\end{equation}
where $u_0\in\Lip(\R^n)$ and the Hamiltonian $H: \R^n\rightarrow\R$ is assumed to satisfy the following hypotheses:
\begin{equation}\label{CondH}
H\in C^2(\R^n), \quad H_{pp}(p) >0, \ \forall p\in\R^n, \quad \text{and} \quad 
\lim_{|p|\to\infty} \dfrac{H(p)}{|p|} = +\infty.
\end{equation}
Here, the unknown $u$ is a function $[0,T]\times\R^n\longrightarrow\R$,
the notation $\partial_t u$ stands for the derivative of $u$ with respect to the first variable
and $D_x u$, for the vector of partial derivatives with respect to the second group of variables. 
The inequality $H_{pp}(p)>0$ in \eqref{CondH} means that the Hessian matrix of $H$ at $p$ is positive definite.
The study of equations such as \eqref{HamJac eq}
arises in the context of optimal control theory and calculus of variations,
where the value function satisfies, in a weak sense, a Hamilton-Jacobi equation like \eqref{HamJac eq}
(see \cite{bardi2008optimal,barles1994solutions,barles2013introduction,cannarsa2004semiconcave,evans2010partial,fleming2006controlled} 
and the references therein).
In this context, Hamilton-Jacobi equations have applications in a wide range of fields such as
economics, physics, mathematical finance, traffic flow and geometrical optics. 

It is well known that, due to the presence of a nonlinear term in the equation, 
one cannot in general expect the existence of a classical $C^1$ solution for problem \eqref{HamJac eq},
even if the initial datum $u_0$ is assumed to be very smooth.
On the other hand, continuous solutions satisfying the equation almost everywhere might not be unique.
In the early 80's, Crandall and Lions solved this problem in \cite{crandall1983viscosity} 
by introducing the notion of \emph{viscosity solution} (Definition \ref{Def visc sol HJ}), see also \cite{barles1994solutions,cannarsa2004semiconcave,crandall1992user,katzourakis2014introduction}.

For any initial condition $u_0\in\Lip(\R^n)$, existence and uniqueness of a viscosity solution
$u\in \Lip ([0,T]\times\R^n)$ was established in \cite{crandall1983viscosity}.
This solution can be obtained, using the method of the \emph{vanishing viscosity} (see \cite{crandall1992user,crandall1983viscosity,lions1982generalized}), 
as the limit when $\varepsilon\to 0^+$ of the unique solution of the parabolic problem
\begin{equation*}
\left\{\begin{array}{ll}
\partial_t u - \varepsilon\, \Delta_x u + H(D_x u) = 0, & \text{in} \ [0,T]\times \R^n, \\
\noalign{\vspace{1.5mm}}
u(0,x) = u_0(x), & \text{in} \ \R^n,
\end{array}\right.
\end{equation*}
for which, by the standard theory of nonlinear parabolic equations, existence and uniqueness of a classical solution holds as long as $\varepsilon>0$.

In view of the existence and uniqueness of a viscosity solution for problem \eqref{HamJac eq},
we can define, for a fixed $T>0$, the following nonlinear operator, which associates to any initial condition $u_0$, the function
$u(T,\cdot)$, where $u$ is the viscosity solution of \eqref{HamJac eq}:
\begin{equation*}
\begin{array}{cccl}
S^+_T : & \Lip(\R^n) & \longrightarrow & \Lip(\R^n) \\
 & u_0 & \longmapsto & S^+_T u_0 := u(T,\cdot)
\end{array}
\end{equation*}

Our goal in this work is to study the inverse problem associated to \eqref{HamJac eq}.
More precisely, for a given target function $u_T$ and a time horizon $T>0$,
we want to construct all the initial conditions $u_0$ such that the viscosity solution of \eqref{HamJac eq} coincides with $u_T$ at time $T$.
This type of problems, also known in the literature as data assimilation problems,
have relevant importance in any kind of evolution models.
For instance in meteorology \cite{ghil1991data,miyakoda1978initialization},
where the climate prediction must take into account not only the observations at the present time, but also at past times.
For related results in the context of Hamilton-Jacobi equations, we refer to \cite{claudel2011convex,colombo2019initial,misztela2020initial} and the references therein.

The first thing one notices when addressing this problem is that not all the Lipschitz targets are reachable.
Indeed, as it is well known, the viscosity solution to \eqref{HamJac eq} is always a semiconcave function (Definition \ref{Def semiconcave semiconvex}). 
Therefore, an obvious necessary (but not sufficient) condition for the reachability of $u_T$ is that it must be a semiconcave function.
We then split the problem in the following three steps:
\begin{enumerate}
\item First, we study the reachability of the target, i.e. the existence of at least one $u_0$ satisfying $S_T^+u_0 =u_T$.
The natural candidate is the one obtained by reversing the direction of time in the equation \eqref{HamJac eq},
considering the target $u_T$ as terminal condition (see the definition of \emph{backward viscosity solution} in Definition \ref{Def Backward soluiton}).
Indeed, it turns out that the initial datum recovered by this method, allows one to determine whether the target is reachable or not (see Theorem \ref{Thm 1st reach cond}).
We also obtain a reachability criterion based on a differential inequality (see Theorem \ref{Thm 2nd reach cond}),
that links the reachability of the target with its semiconcavity properties.
\item Secondly, if the target is reachable, we construct all the initial conditions $u_0$
such that $S_T^+ u_0 =u_T$ (see Theorem \ref{Thm IniData Identif}).
As we will see, such $u_0$ is not in general unique.
The impossibility of uniquely determining the initial condition from the solution at time $T$ 
is a main feature in first-order nonlinear evolution equations like \eqref{HamJac eq}, 
and can be interpreted as a loss of the initial information due to the nonlinear effects and the loss of regularity.
\item Finally, if the target $u_T$ is not reachable, we project it on the set of reachable targets
by solving the problem \eqref{HamJac eq} backward in time and then forward.
As we will see, this projection has some interesting properties and seems to be the most natural one.
For instance, it is the smallest reachable target which is bounded from below by $u_T$
and can be characterized as the viscosity solution of a fully nonlinear obstacle problem (see Theorem \ref{Thm semiconcave envelope}).
\end{enumerate}

The idea of iterating forward and backward resolutions of the model in order to reconstruct
previous states has already been exploited in different types of evolution equations,
and seems to be an effective method to deal with data assimilation problems like the one treated in this paper.
See for example the works \cite{auroux2005back,auroux2008nudging} where a Back and Forth Nudging algorithm is proposed.

The paper is structured as follows:
in Section \ref{Sec Results},
we present and discuss our main results concerning each one of the three points described above.
Section \ref{Sec Examples} is devoted to some examples that illustrate these results.
In Section \ref{Sec Backward solutions}, we introduce the notion of backward viscosity solution,
present some of its well-known properties, 
and then we prove the reachability criterion of Theorem \ref{Thm 1st reach cond}.
In Section \ref{Sec IniDataConstruction}, we give the proof of the results about the construction of initial conditions
for a reachable target, Theorems \ref{Thm IniData Identif} and \ref{Thm X_I charac}.
Section \ref{Sec semiconcave} is devoted to the study of the composition operator $S_T^+\circ S_T^-$,
that can be viewed as a projection of $u_T$ on the set of reachable targets.
In this section we prove Theorem \ref{Thm semiconcave envelope}, that identifies the image of $S_T^+\circ S_T^-$
with the viscosity solution of a fully nonlinear obstacle problem.
Then, we discuss the connection of this obstacle problem with the concave envelope of a function
and give a geometrical interpretation of $S_T^+(S_T^- u_T)$ as the semiconcave envelope of $u_T$.
We end the Section \ref{Sec semiconcave} with the proof of Theorem \ref{Thm 2nd reach cond},
that gives a criterion for the reachability of a target in terms of a differential inequality.
Finally, in Section \ref{Sec Conclusions} we discuss the contributions of this work and present some possible extensions and future perspectives. 

\section{Main results}\label{Sec Results}

\subsection{Reachability criteria}
For a given target $u_T\in \Lip (\R^n)$,
our first goal is to determine whether or not there exists at least one initial condition $u_0$ such that $S_T^+ u_0=u_T$. That is, we want to give a necessary and sufficient condition for the set
\begin{equation}\label{Initial conditions}
I_T (u_T) := \left\{ u_0\in \Lip(\R^n) \, ; \ S_T^+ u_0 = u_T\right\}
\end{equation}
to be nonempty.
In Theorem \ref{Thm 1st reach cond} below, for any $u_T\in \Lip(\R^n)$, we give a reachability condition 
based on the so-called \emph{backward viscosity solution} (see Definition \ref{Def Backward soluiton}).
This notion of solution was already used in \cite{barron1999regularity} in order to study the relation between the regularity of solutions and the time-reversibility of problem \eqref{HamJac eq}.

It is well known (see Section \ref{Sec Backward solutions} for more details) that in the class of backward viscosity solutions,
existence and uniqueness  holds for the terminal-value problem
\begin{equation}\label{Backward HamJac eq}
\left\{ \begin{array}{ll}
\partial_t w + H(D_x w) = 0, & \text{in} \ [0,T]\times \R^n, \\
\noalign{\vskip 1.5mm}
w(T,x) = u_T(x), & \text{in} \ \R^n.
\end{array}\right.
\end{equation}
Actually, analogously to the (forward) viscosity solutions, the backward viscosity solution can be obtained as the limit when $\varepsilon \to 0^+$ of the solution to the problem
\begin{equation*}
\left\{\begin{array}{ll}
\partial_t w + \varepsilon\, \Delta_x w + H(D_x w) = 0, & \text{in} \ [0,T]\times \R^n, \\
\noalign{\vspace{1.5mm}}
w(T,x) = u_T(x), & \text{in} \ \R^n.
\end{array}\right.
\end{equation*}
Hence, we can define the nonlinear operator
\begin{equation*}
\begin{array}{cccl}
S^-_T : & \Lip(\R^n) & \longrightarrow & \Lip(\R^n) \\
 & u_T & \longmapsto & S^-_T u_T := w(0,\cdot)
\end{array}
\end{equation*}
which associates to any terminal condition $u_T$, the function $w(0,\cdot)$,
i.e. the unique backward viscosity solution of \eqref{Backward HamJac eq} at time $0$.

Here we state the first reachability criterion, which identifies the reachable targets with the fix-points of the composition operator $S_T^+\circ S_T^-$.

\goodbreak

\begin{theorem}\label{Thm 1st reach cond}
Let $H$ satisfy \eqref{CondH}, $u_T\in \Lip(\R^n)$ and $T>0$. Then, the set $I_T(u_T)$ defined in \eqref{Initial conditions} is nonempty 
if and only if  $S_T^+ \left( S_T^- u_T\right) = u_T.$ 
\end{theorem}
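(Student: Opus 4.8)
The plan is to reduce the whole statement to two one-sided relations between the forward and backward solution operators, which I will treat as the structural properties of backward viscosity solutions recalled in Section \ref{Sec Backward solutions}. Concretely, I would use: (a) $S_T^+$ is order preserving, i.e. $\phi\le\psi$ pointwise implies $S_T^+\phi\le S_T^+\psi$ (the comparison principle for \eqref{HamJac eq}); (b) $S_T^+\circ S_T^-\ge\mathrm{Id}$, meaning $S_T^+(S_T^-\psi)\ge\psi$ for every $\psi\in\Lip(\R^n)$; and (c) $S_T^-\circ S_T^+\le\mathrm{Id}$, meaning $S_T^-(S_T^+\phi)\le\phi$ for every $\phi\in\Lip(\R^n)$. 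Relations (b) and (c) encode the fact that composing the forward evolution with its backward counterpart cannot restore the information lost through the nonlinearity; at the level of the Lax--Oleinik representations (inf-convolution for $S_T^+$, sup-convolution for $S_T^-$) they follow immediately by testing the inner $\inf$ (resp.\ $\sup$) against the single competitor equal to the base point.

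The sufficiency is the trivial direction. If $S_T^+(S_T^-u_T)=u_T$, then $u_0:=S_T^-u_T$ is an element of $\Lip(\R^n)$ with $S_T^+u_0=u_T$, hence $u_0\in I_T(u_T)$ and the set in \eqref{Initial conditions} is nonempty.

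For the necessity, assume $I_T(u_T)\ne\emptyset$ and fix $u_0$ with $S_T^+u_0=u_T$. Applying (c) to $\phi=u_0$ gives $S_T^-u_T=S_T^-(S_T^+u_0)\le u_0$, and the monotonicity (a) of $S_T^+$ then yields
\[
S_T^+(S_T^-u_T)\le S_T^+u_0=u_T.
\]
On the other hand, (b) applied to $\psi=u_T$ gives the reverse bound $S_T^+(S_T^-u_T)\ge u_T$. Combining the two inequalities produces $S_T^+(S_T^-u_T)=u_T$, which is exactly the asserted fix-point identity. Equivalently, (a), (b) and (c) together give the operator identity $S_T^+\circ S_T^-\circ S_T^+=S_T^+$, from which both implications follow at once.

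A remark on where the real difficulty lies: once (a), (b) and (c) are in hand the argument is purely algebraic, so the main obstacle is not in this theorem but in establishing those three properties for the notion of backward viscosity solution — especially the one-sided bounds (b) and (c), which quantify the irreversibility of \eqref{HamJac eq}. I expect these to be precisely the well-known facts gathered in Section \ref{Sec Backward solutions}; should one prefer a self-contained argument, the cleanest route is to read (b) and (c) off the variational formulas for $S_T^+$ and $S_T^-$, where each reduces to comparing the value of an optimization problem against one admissible choice.
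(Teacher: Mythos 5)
Your proposal is correct and takes essentially the same approach as the paper: your properties (a), (b), (c) are precisely the comparison principle, the inequality $u_T\le u_T^\ast$ of Proposition \ref{Prop forward stability}, and the inequality $u_0\ge\tilde u_0$ of Proposition \ref{Prop backward stability}, each obtained in the paper by exactly the single-competitor bounds on the formulas \eqref{Hopf formula} and \eqref{Backward Hopf formula} that you describe. The paper packages the necessity direction as the operator identity $S_T^+\circ S_T^-\circ S_T^+ = S_T^+$ (Proposition \ref{Prop backward stability}), which is the same identity you note follows from (a), (b), (c), so the two arguments coincide step by step.
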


\goodbreak

Reversing the time in the equation in order to find initial conditions conducting to a given target 
is a natural approach in all kinds of evolution equations.
However, in many cases, the obtained initial condition does not lead the system back to the target.  
In the case of the problem \eqref{HamJac eq}, Theorem \ref{Thm 1st reach cond} ensures that the target is reachable if and only if this technique of reversing the time gives the desired initial condition.

As a drawback of this result, we need to solve first the problem \eqref{Backward HamJac eq} 
and then the problem \eqref{HamJac eq} in order to determine if a target is reachable or not.
Next, for the one-dimensional case, and for the case of a quadratic Hamiltonian in any space-dimension, i.e. 
\begin{equation}\label{quadratic Hamiltonian}
H(p) = \dfrac{\langle A\, p, p\rangle}{2} \quad \text{where $A$ is a definite positive $n\times n$ matrix},
\end{equation} 
we give a reachability criterion based on a differential inequality.

\goodbreak

\begin{theorem}\label{Thm 2nd reach cond}
Let $u_T\in \Lip(\R^n)$ and $T>0$. 
\begin{enumerate}
\item If $H$ satisfies \eqref{CondH} and the space-dimension is $1$,  
the set $I_T(u_T)$ defined in \eqref{Initial conditions} is nonempty 
if and only if  $u_T$ satisfies the inequality 
$$
\partial_{xx} u_T - \left( T\, H_{pp}(\partial_x u_T)\right)^{-1} \leq 0, \qquad \text{in} \ \R
$$
in the viscosity sense (see Definition \ref{Def visc sol diff ineq}).
\item If $H$ is given by \eqref{quadratic Hamiltonian}, 
the set $I_T(u_T)$ defined in \eqref{Initial conditions} is nonempty 
if and only if  $u_T$ satisfies the inequality 
$$
\lambda_n \left[ D^2 u_T - \dfrac{A^{-1}}{T} \right] \leq 0, \qquad \text{in} \ \R^n
$$
in the viscosity sense (see Definition \ref{Def visc sol diff ineq}).
\end{enumerate}
\end{theorem}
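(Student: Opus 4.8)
The plan is to combine the fixed-point characterization of reachability from Theorem \ref{Thm 1st reach cond} with the explicit Hopf--Lax representations of the two solution operators. Writing $L=H^*$ for the Legendre transform and $\phi_T(w):=T\,L(w/T)$, the forward and backward operators are the inf- and sup-convolutions
\[
S_T^+ g(x)=\inf_{y}\bigl[g(y)+\phi_T(x-y)\bigr],\qquad
S_T^- u_T(x)=\sup_{z}\bigl[u_T(z)-\phi_T(z-x)\bigr].
\]
Since $u_T\in\Lip(\R^n)$ is globally Lipschitz while $\phi_T$ grows superlinearly (by the coercivity in \eqref{CondH}), the supremum defining $S_T^-u_T$ is attained and $S_T^-u_T\in\Lip(\R^n)$. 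The first elementary observation is that $S_T^+S_T^-u_T\ge u_T$ always: testing the inner supremum at $z=x$ makes the two kernel terms cancel. By Theorem \ref{Thm 1st reach cond}, reachability is therefore equivalent to the reverse inequality $S_T^+S_T^-u_T\le u_T$, and the whole statement reduces to showing that this reverse inequality is equivalent to the differential inequality, in both the general one-dimensional case and the quadratic case.

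For the necessity (``only if''), suppose $u_T=S_T^+u_0$ is reachable. Then $u_T$ is an infimum of the translated kernels $\phi_T(\cdot-y)+u_0(y)$. In the quadratic case each such kernel is the paraboloid with constant Hessian $A^{-1}/T$, so the infimum inherits the one-sided bound $D^2u_T\le A^{-1}/T$, i.e. $\lambda_n[D^2u_T-A^{-1}/T]\le 0$, in the viscosity sense; this is the standard semiconcavity estimate for the Lax--Oleinik solution. In the one-dimensional case the modulus is state-dependent: at a point of twice-differentiability where the infimum is attained at $y^*$, with $w^*=x-y^*$, one has $\partial_x u_T=\phi_T'(w^*)=L'(w^*/T)$ and $\partial_{xx}u_T\le\phi_T''(w^*)=T^{-1}L''(w^*/T)$. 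The Legendre duality identity $L''(w^*/T)=1/H_{pp}(\partial_x u_T)$ then turns this into $\partial_{xx}u_T-\bigl(T\,H_{pp}(\partial_x u_T)\bigr)^{-1}\le0$. I would promote this pointwise computation to the viscosity sense (Definition \ref{Def visc sol diff ineq}) by working with smooth functions touching $u_T$ from above and transferring the second-order information through the kernel.

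For the sufficiency (``if''), I would show that the differential inequality forces $u_T$ to be representable as an infimum of translated kernels, and hence $S_T^+S_T^-u_T\le u_T$. In the quadratic case this is transparent: the inequality $\lambda_n[D^2u_T-A^{-1}/T]\le0$ says precisely that $x\mapsto u_T(x)-\tfrac12\langle (A^{-1}/T)x,x\rangle$ is concave, hence an infimum of affine functions; completing the square exhibits $u_T$ as $\inf_y[\phi_T(\cdot-y)+h(y)]=S_T^+h$ for a suitable $h$. Once $u_T=S_T^+h$, the definition of $S_T^-$ as a sup-convolution gives directly $S_T^-u_T\le h$, and applying the monotone operator $S_T^+$ yields $S_T^+S_T^-u_T\le S_T^+h=u_T$. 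Together with $S_T^+S_T^-u_T\ge u_T$ this gives equality, so $S_T^-u_T\in I_T(u_T)$ by Theorem \ref{Thm 1st reach cond}. In the one-dimensional case the same scheme applies, the representability now coming from the fact that the state-dependent inequality lets one touch $u_T$ from above at each point by a translate of $\phi_T$ with matching first derivative.

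The main obstacle I expect is the rigorous handling of the one-dimensional, gradient-dependent inequality in the viscosity sense. Unlike the quadratic case, where everything collapses to the concavity of a single explicit function, here the modulus $\bigl(T\,H_{pp}(\partial_x u_T)\bigr)^{-1}$ varies with the gradient, so neither necessity nor sufficiency can be read off from a fixed paraboloid; one must set up the two-way equivalence between the viscosity inequality and the local touching-by-kernel property, and check that it survives at points where $u_T$ fails to be twice differentiable. The remaining technical points---finiteness and Lipschitz regularity of $S_T^-u_T$, and the comparison and monotonicity identities for the convolution operators---are comparatively routine and can alternatively be packaged through the obstacle-problem characterization of $S_T^+S_T^-u_T$ in Theorem \ref{Thm semiconcave envelope}.
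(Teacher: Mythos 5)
Your reduction via Theorem \ref{Thm 1st reach cond} to the single inequality $S_T^+(S_T^- u_T)\le u_T$, and the subsequent direct work with the Hopf--Lax formulas, is sound, and it is a genuinely different route from the paper's: the paper deduces the theorem in a few lines from Theorem \ref{Thm semiconcave envelope}, observing that reachability is equivalent to $u_T$ solving the obstacle problem \eqref{obstacle problem general}, that every $u_T$ is automatically a viscosity subsolution of it, and that the supersolution property is precisely the stated differential inequality. Your necessity argument is correct in both cases, with one small slip: Definition \ref{Def visc sol diff ineq} tests the inequality at local minima of $u_T-\phi$, i.e.\ with smooth functions touching $u_T$ from \emph{below}, not from above; the promotion then works by sandwiching $\phi\le u_T\le u_0(y^\ast)+T\,L\left(\frac{\cdot-y^\ast}{T}\right)$ at a Hopf--Lax minimizer $y^\ast$ and comparing first and second derivatives at the touching point. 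Your quadratic sufficiency is complete and arguably more elementary than the paper's: Theorem \ref{Thm Oberman convex} turns the inequality into concavity of $u_T-\langle A^{-1}x,x\rangle/(2T)$, supporting affine functions plus completing the square yield a kernel majorant touching $u_T$ at each point, and monotonicity of the inf-convolution closes the loop.

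The genuine gap is the sufficiency direction of part (i). The one sentence you devote to it---that ``the state-dependent inequality lets one touch $u_T$ from above at each point by a translate of $\phi_T$ with matching first derivative''---is the entire content of the theorem in the one-dimensional case, and you leave it unproved (indeed you flag it yourself as the main obstacle). The difficulty is a local-to-global one. For smooth $u_T$ the step is easy: setting $y_0=x_0-T\,H_p(\partial_x u_T(x_0))$, the differential inequality says that $x\mapsto H_p(\partial_x u_T(x))-(x-y_0)/T$ is nonincreasing and vanishes at $x_0$, so $u_T-T\,L\left(\frac{\cdot-y_0}{T}\right)$ is nondecreasing before $x_0$ and nonincreasing after it, hence globally maximized there. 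But when $u_T$ is merely Lipschitz and the inequality holds only in the viscosity sense, this monotonicity argument has no direct meaning, and promoting it is nontrivial; the paper faces exactly this local-to-global issue in Case 1 of the proof of Theorem \ref{Thm semiconcave envelope} and resolves it with a dedicated argument combining the semiconcavity of forward solutions (Proposition \ref{Prop semiconcave}), the fact that a point with nonempty sub- and superdifferential is a point of differentiability, the uniqueness of the Hopf--Lax minimizer (Proposition \ref{Prop unique min Hopf}), and the one-dimensional fact that between a strict local maximum and a point of equal or greater value there is an interior local minimum. None of this machinery appears in your proposal. Note finally that the gap could be closed with tools you already cite: invoking Theorem \ref{Thm semiconcave envelope} in full (not merely to ``package routine technical points''), together with uniqueness for \eqref{obstacle problem 1D}, gives both implications of part (i) at once---which is exactly the paper's proof.
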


\goodbreak

Here, $D^2 u_T$ denotes the Hessian matrix of $u_T$, and the expression  $\lambda_n[X]$ denotes the largest eigenvalue of the $n\times n$ symmetric matrix $X$.
Analogously, we will use $\lambda_1[X]$ to denote the smallest eigenvalue of $X$.

\begin{remark}\label{Rmk 1D equiv}
In the multidimensional case,
Theorem \ref{Thm 2nd reach cond} only applies to quadratic Hamiltonians.
This is due to the fact that
for this case, the Hessian matrix of $H$ is constant over $\R^n$.
In the one-dimensional case, the result can be generalized to any strictly convex $H$,
however, the arguments that we use in the proof do not apply to higher dimensions,
and a similar necessary and sufficient reachability condition do not seem to be straightforward for the case of 
a general convex Hamiltonian in any space-dimension (see Remark \ref{Rmk no general H in multiD}). 

Note in addition that, in the one-dimensional case, the transformation
\begin{equation}\label{transform 1D}
v(t,x) \longmapsto f'(v(t,x))
\end{equation}
allows to reduce the study of any scalar conservation law of the form
$$
\partial_t v + \partial_x (f(v)) = 0,
$$
to the case of Burgers equation (see for example \cite{liard2019inverse})
$$
\partial_t w + \partial_x \left( \dfrac{w^2}{2} \right) = 0.
$$
Then, using the relation between Hamilton-Jacobi equations and scalar conservations laws (see for example \cite{colombo2019initial}),
in one-space dimension we can reduce the study of \eqref{HamJac eq} to the case $H(p) = \dfrac{p^2}{2}$.
\end{remark}

Observe that the reachability criterion of Theorem \ref{Thm 2nd reach cond} does not involve the operators $S_T^+$ and $S_T^-$.
Furthermore, this result relates the reachability of a target $u_T$ with its semiconcavity properties.
We recall that a continuous function $f:\R^n\to \R$ is concave if and only if 
it is a viscosity solution of $\lambda_n [D^2 f]\leq 0$ (see for example the work of Oberman \cite{oberman2007convex}).
In view of this, from Theorem \ref{Thm 2nd reach cond} and 
the properties of semiconcave functions in Proposition \ref{Prop semiconcave characterization} below, 
we can deduce the following result, that relates the reachability of a function $u_T$ with its semiconcavity constant.

\begin{corollary}\label{Cor reach cond}
Let $H$ be given by \eqref{quadratic Hamiltonian}, $u_T\in \Lip(\R^n)$ and $T>0$.
\begin{enumerate}
\item If $I_T(u_T)\neq \emptyset$, then $u_T$ is semiconcave with linear modulus and constant 
$\dfrac{1}{T\lambda_1(A)}$.
\item If $u_T$ is semiconcave with linear modulus and constant $\dfrac{1}{T\lambda_n (A)}$,
then $I_T(u_T)\neq \emptyset$.
\end{enumerate}
\end{corollary}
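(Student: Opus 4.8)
The plan is to reduce both implications to a single pointwise matrix inequality on the Hessians of upper test functions, after which the corollary becomes elementary linear algebra. The two ingredients I would combine are Theorem \ref{Thm 2nd reach cond}(ii), which characterizes $I_T(u_T)\neq\emptyset$ by the viscosity inequality $\lambda_n[D^2 u_T - A^{-1}/T]\leq 0$, and Proposition \ref{Prop semiconcave characterization}, which—in view of the concavity characterization $\lambda_n[D^2 f]\leq 0$ recalled just before the statement—I would use in the form: $u_T$ is semiconcave with linear modulus and constant $C$ if and only if $\lambda_n[D^2 u_T - C\,I]\leq 0$ in the viscosity sense. Unfolding both conditions through Definition \ref{Def visc sol diff ineq}, each becomes the same type of assertion: for every $x_0$ and every $C^2$ function $\phi$ touching $u_T$ from above at $x_0$, the Hessian $D^2\phi(x_0)$ is dominated, in the usual partial order on symmetric matrices, by a fixed constant matrix.

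The linear-algebraic heart of the argument is the spectral location of $A^{-1}/T$. Since $A$ is symmetric positive definite with smallest eigenvalue $\lambda_1(A)$ and largest eigenvalue $\lambda_n(A)$, the matrix $A^{-1}/T$ has eigenvalues lying between its smallest, $1/(T\lambda_n(A))$, and its largest, $1/(T\lambda_1(A))$. Consequently one has the two matrix inequalities
\begin{equation*}
\frac{1}{T\lambda_n(A)}\, I \;\leq\; \frac{A^{-1}}{T} \;\leq\; \frac{1}{T\lambda_1(A)}\, I.
\end{equation*}
Together with the monotonicity of the largest-eigenvalue map $X\mapsto\lambda_n[X]$ with respect to this partial order, these bounds are exactly what lets me pass between $A^{-1}/T$ and the scalar matrices $C\,I$ appearing in the semiconcavity condition.

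For part (i), I would start from $I_T(u_T)\neq\emptyset$, invoke Theorem \ref{Thm 2nd reach cond}(ii) to obtain $D^2\phi(x_0)\leq A^{-1}/T$ at every point of upper contact, and then use the right-hand bound above to deduce $D^2\phi(x_0)\leq \tfrac{1}{T\lambda_1(A)}\,I$; by Proposition \ref{Prop semiconcave characterization} this is precisely semiconcavity with constant $1/(T\lambda_1(A))$. For part (ii) the direction reverses: semiconcavity with constant $1/(T\lambda_n(A))$ gives $D^2\phi(x_0)\leq \tfrac{1}{T\lambda_n(A)}\,I$, and the left-hand bound upgrades this to $D^2\phi(x_0)\leq A^{-1}/T$, so that Theorem \ref{Thm 2nd reach cond}(ii) yields $I_T(u_T)\neq\emptyset$. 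I expect the only delicate point to be the bookkeeping at the level of viscosity test functions, namely verifying that dominating $D^2\phi(x_0)$ by a fixed matrix at every point of upper contact is genuinely equivalent to the associated viscosity inequality. Because the comparison matrices here are constant—independent of both $x$ and $Du_T$—this transition is immediate and no true PDE difficulty intervenes; the entire content of the corollary is carried by the spectral bounds on $A^{-1}$.
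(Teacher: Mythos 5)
Your argument is correct and, while it rests on the same external ingredients as the paper (Theorem \ref{Thm 2nd reach cond}, Proposition \ref{Prop semiconcave characterization}, and Oberman's characterization of concavity, Theorem \ref{Thm Oberman convex}), it organizes the linear algebra differently and more cleanly. The paper proves (i) by deducing from the viscosity inequality that $x\mapsto u_T(x)-\langle A^{-1}x,x\rangle/(2T)$ is concave and then invoking the decomposition in Proposition \ref{Prop semiconcave characterization}; for (ii) it runs this in reverse and then needs two applications of the Weyl-type estimates of Property \ref{Prop eigenvalues} to pass from the scalar matrix $\lambda_1[A^{-1}]I_n/T$ to $A^{-1}/T$ inside $\lambda_n[\cdot]$. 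You replace all of this bookkeeping by the single observation that $\lambda_n[X]\leq 0$ is equivalent to $X\leq 0$ in the Loewner order, so that both the reachability inequality and the semiconcavity condition unfold to ``$D^2\phi(x_0)\leq M$ at every contact point'' with \emph{constant} comparison matrices $M$, after which the two-sided bound
\begin{equation*}
\frac{1}{T\lambda_n(A)}\,I\;\leq\;\frac{A^{-1}}{T}\;\leq\;\frac{1}{T\lambda_1(A)}\,I
\end{equation*}
and transitivity of the order give both implications symmetrically. This buys you a proof in which parts (i) and (ii) are mirror images, with Property \ref{Prop eigenvalues} never needed; what the paper's route buys is that it never has to unfold the viscosity definitions at all, working instead at the level of concave functions and envelopes.

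One slip you should correct: the contact convention. In Definition \ref{Def visc sol diff ineq} the test functions $\phi$ are those for which $x_0$ is a local \emph{minimum} of $u_T-\phi$, i.e.\ (after normalizing $\phi(x_0)=u_T(x_0)$) functions touching $u_T$ \emph{from below}, not from above as you write. The distinction is not cosmetic: at a downward kink such as that of $|x|$ at the origin, no smooth function touches from above, so with your stated convention the inequality would hold vacuously there and would fail to rule out such kinks --- yet $|x|$ is not semiconcave for any constant. Since both conditions you compare (the inequality of Theorem \ref{Thm 2nd reach cond} and the semiconcavity condition, obtained from Proposition \ref{Prop semiconcave characterization}, Theorem \ref{Thm Oberman convex}, and the quadratic shift of test functions) use the same from-below convention, your comparison argument survives verbatim once the wording is fixed; but as literally stated, the unfolded conditions are not the ones in the paper.
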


See a detailed proof of this corollary in Section \ref{Sec semiconcave}.
For the precise definition of semiconcave function with linear modulus, see Definition \ref{Def semiconcave semiconvex}.

\begin{remark}\label{Rmk reach cond}
\begin{enumerate}
\item Observe that, in the particular case of a Hamiltonian given by \eqref{quadratic Hamiltonian} with $A=c\, I_n$ and $c>0$, i.e.
$$H(p) = c\dfrac{|p|^2}{2},$$
we have  $\lambda_1(A)=\lambda_n(A)=c$. 
Then, Corollary \ref{Cor reach cond} implies that
$I_T(u_T)\neq \emptyset$ if and only if $u_T$ is semiconcave with linear modulus and constant $\dfrac{1}{c\, T}$.

\item It can be easily checked that, if a function $u_T$ satisfies the inequality of Theorem \ref{Thm 2nd reach cond} for some $T>0$, 
then the same inequality holds for any $T'\in ]0, T]$.
This implies that the set of reachable targets becomes smaller as we increase the time horizon $T$.
In the limit case, if we let $T$ go to $\infty$, 
we observe that only the concave functions are reachable for all $T>0$. 
\end{enumerate}
\end{remark}

\subsection{Initial data construction}
Here, for the case when the target $u_T$ is reachable,
our goal is to construct all the initial conditions $u_0$ in $I_T(u_T)$.
Our construction relies on the fact that, in view of Theorem \ref{Thm 1st reach cond},
$I_T(u_T)\neq \emptyset$ implies that $S_T^- u_T\in I_T(u_T)$.

\goodbreak

\begin{theorem}\label{Thm IniData Identif}
Let $H$ satisfy \eqref{CondH} and $T>0$.
Let $u_T\in \Lip(\R^n)$ be such that $I_T(u_T)\neq \emptyset$ and set the function $\tilde{u}_0 := S_T^- u_T$.
Then, for any $u_0\in \Lip (\R^n)$, the two following statements are equivalent:
\begin{enumerate}
\item\label{Thm IniData Identif u_0 in I_T} $u_0\in I_T(u_T)$;
\item\label{Thm IniData Identif u0 geq uast} $u_0(x)\geq \tilde{u}_0 (x), \ \forall x\in \R^n \quad \text{and} \quad u_0(x) = \tilde{u}_0(x), \ \forall x\in X_T(u_T),$
\end{enumerate}
where $X_T(u_T)$ is the subset of $\R^n$ given by 
$$
X_T(u_T) : = \left\{ z- T\, H_p (\nabla u_T(z)); \ \forall z\in\R^n \ \text{such that} \ 
u_T(\cdot) \ \text{is differentiable at} \ z\right\}.
$$
\end{theorem}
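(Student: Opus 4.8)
The plan is to reduce everything to the Lax--Oleinik (Hopf--Lax) representations of the two semigroups. Writing $L := H^{*}$ for the Legendre transform of $H$ (finite, $C^2$ and superlinear thanks to \eqref{CondH}), I would use that the forward solution is given by
$$
S_T^+ u_0(z) = \min_{y\in\R^n}\left\{ u_0(y) + T\, L\!\left(\frac{z-y}{T}\right)\right\},
$$
while the backward solution admits the dual representation
$$
S_T^- u_T(x) = \max_{z\in\R^n}\left\{ u_T(z) - T\, L\!\left(\frac{z-x}{T}\right)\right\}.
$$
Both formulas, together with the Legendre identity $q = H_p(p)\Leftrightarrow p = L_q(q)$, are recalled in Section \ref{Sec Backward solutions}; note that $L$ is strictly convex (hence $L_q$ is injective) because $H_{pp}>0$. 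This identity is what makes the set $X_T(u_T)$ appear: if $u_T$ is differentiable at $z$, then at any minimizer $y^{*}$ in the forward formula for $S_T^+ u_0(z)=u_T(z)$ the smooth competitor touches $u_T$ from above, so $\nabla u_T(z)=L_q((z-y^{*})/T)$; injectivity of $L_q$ then forces $y^{*}=z-T\,H_p(\nabla u_T(z))$, independently of which $u_0\in I_T(u_T)$ we chose. Thus the generic point of $X_T(u_T)$ is exactly the unique foot of the optimal backward characteristic.

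For the implication $(i)\Rightarrow(ii)$, I would first note that the min-formula gives $u_T(z)\le u_0(x)+T L((z-x)/T)$ for all $x,z\in\R^n$; rearranging and maximizing over $z$ yields $u_0(x)\ge S_T^- u_T(x)=\tilde u_0(x)$, which is the inequality in $(ii)$. For the equality on $X_T(u_T)$, fix $x=z_0-T H_p(\nabla u_T(z_0))$ with $u_T$ differentiable at $z_0$. By the previous paragraph the unique minimizer in $S_T^+u_0(z_0)=u_T(z_0)$ is this $x$, so $u_0(x)=u_T(z_0)-T L((z_0-x)/T)$; combining this with the bound $\tilde u_0(x)\ge u_T(z_0)-T L((z_0-x)/T)$ (from the max-formula) and the already-proven inequality $u_0(x)\ge\tilde u_0(x)$ forces $u_0(x)=\tilde u_0(x)$.

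For the converse $(ii)\Rightarrow(i)$, monotonicity of the min-formula together with Theorem \ref{Thm 1st reach cond} (reachability gives $S_T^+\tilde u_0=u_T$) yields, from $u_0\ge\tilde u_0$, the inequality $S_T^+u_0\ge S_T^+\tilde u_0=u_T$. For the reverse inequality I would test the min-formula at a differentiability point $z$ of $u_T$ with the competitor $y=z-T H_p(\nabla u_T(z))\in X_T(u_T)$, on which $u_0=\tilde u_0$ by hypothesis, obtaining $S_T^+u_0(z)\le \tilde u_0(y)+T L((z-y)/T)=u_T(z)$; since $u_T$ is differentiable almost everywhere and both sides are continuous, $S_T^+u_0\le u_T$ holds on all of $\R^n$. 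Hence $S_T^+u_0=u_T$, i.e. $u_0\in I_T(u_T)$.

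The main obstacle is the careful handling of the optimal backward characteristics: one must justify rigorously that at every differentiability point $z$ of the reachable target the Lax--Oleinik minimizer is unique and given precisely by $z-T H_p(\nabla u_T(z))$, and that the value transported back along it is $\tilde u_0$. This uniqueness-of-minimizer statement is what couples the two semigroups and pins the initial datum down exactly on $X_T(u_T)$; everything else is monotonicity, the duality of the two representation formulas, and a density argument to pass from differentiability points to all of $\R^n$.
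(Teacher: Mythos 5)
Your proof is correct and follows essentially the same route as the paper: both directions rest on the uniqueness of the Hopf--Lax minimizer at differentiability points of $u_T$ (which the paper cites as Proposition \ref{Prop unique min Hopf} and you re-derive via the touching-from-above argument and injectivity of $L_q$), combined with the comparison principle, the dual formula \eqref{Backward Hopf formula}, and Rademacher's theorem to pass from differentiability points to all of $\R^n$. The only cosmetic difference is in the equality step of (i)$\Rightarrow$(ii), where you use the max-formula bound $\tilde u_0(x)\geq u_T(z_0)-T\,L\left(\tfrac{z_0-x}{T}\right)$ instead of invoking that $x_0$ is also the unique minimizer for $\tilde u_0$, which is equivalent.
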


\goodbreak

See the Examples \ref{Example 3} and \ref{Example 3.5} for an illustration of this result.
In view of Theorem \ref{Thm IniData Identif}, when it is nonempty, the set of initial conditions $I_T(u_T)$ defined in \eqref{Initial conditions}
can be given in the following way:
$$
I_T(u_T) = \left\{ \tilde{u}_0 + \varphi \, ; \, \varphi\in\Lip(\R^n)\ 
\text{such that} \ \varphi\geq 0\ \text{and}\ \text{supp} (\varphi) \subset \R^n\setminus X_T(u_T)\right\}.
$$
All the functions in $I_T(u_T)$ coincide with $\tilde{u}_0$ in the set $X_T(u_T)\subset\R^n$,
while in its complement, they are bigger or equal than $\tilde{u}_0$.
We can also write
$$
I_T(u_T) = \tilde{u}_0 + \mathcal{L}(\R^n \setminus X_T(u_T)),
$$
where, for a subset $A$ of $\R^n$, $\mathcal{L}(A)\subset \Lip (\R^n)$ represents the convex cone defined as
$$
\mathcal{L}(A):= \left\{ \varphi \in \Lip (\R^n) \ \text{such that} \ \varphi\geq 0\ \text{and}\ \text{supp} (\varphi) \subset A\right\}
$$

\begin{remark}
\begin{enumerate}
\item We observe that in order to construct all the elements in $I_T(u_T)$,
we need two ingredients: 
the function $\tilde{u}_0$, that can be obtained 
as the backward viscosity solution of \eqref{Backward HamJac eq} using 
the formula \eqref{Backward Hopf formula};
and  the set $X_T(u_T)$, which can be deduced from the points of differentiability of $u_T$.
In Theorem \ref{Thm X_I charac}, for the case of a quadratic Hamiltonian of the form \eqref{quadratic Hamiltonian}, we give a different characterization of the set $X_T(u_T)$ which does not involve the differentiability points of $u_T$, so that the set of initial conditions $I_T(u_T)$ can be constructed without knowing the set of points where $u_T$ is differentiable.

\item It is important to note that, unlike other models, as for example the heat equation
$$
\partial_t u - \Delta u = 0, \qquad \text{in} \ [0,T]\times \R^n,
$$
for which backward uniqueness holds, 
for the problem \eqref{HamJac eq}, a target $u_T$ can be reached by considering different initial conditions.
Indeed, this is the case whenever $I_T(u_T)\neq \emptyset$ and the set $X_T(u_T)$, introduced in Theorem \ref{Thm IniData Identif},
is a proper subset of $\R^n$.
See Example \ref{Example 3} for an illustration of this phenomenon.

\item Similar results on initial data reconstruction were obtained recently by Colombo and Perrollaz \cite{colombo2019initial}
and by Liard and Zuazua \cite{liard2019inverse} for scalar conservation laws and for Hamilton-Jacobi equations in dimension 1.
In fact, exploiting the relation between Hamilton-Jacobi equations and hyperbolic systems of conservation laws,
our results might be adapted to generalize, to the $n$-dimensional case, the results given in \cite{colombo2019initial,liard2019inverse}. 
\end{enumerate}
\end{remark}

The following theorem gives a different characterization of the set $X_T(u_T)$
introduced in Theorem \ref{Thm IniData Identif}, for the case when $H$ is given by  \eqref{quadratic Hamiltonian}.
This result identifies $X_T(u_T)$ with the set of points for which the functions in $I_T(u_T)$ admit a touching paraboloid from below.

\goodbreak

\begin{theorem}\label{Thm X_I charac}
Let $H$ be given by \eqref{quadratic Hamiltonian} and $T>0$.
Let $u_T\in \Lip(\R^n)$ be such that $I_T(u_T)\neq \emptyset$ and take any function $u_0\in I_T(u_T)$.
Then, for any $x_0\in\R^n$, the following two statements are equivalent:
\begin{enumerate}
\item\label{Thm X_I charac i} $x_0\in X_T(u_T)$;
\item\label{Thm X_I charac ii} There exist $b\in \R^n$ and $c\in \R$ such that 
$$
u_0 (x_0) = -\dfrac{ \langle A^{-1}x_0, x_0\rangle}{2T} +  b\cdot x_0 + c \qquad \text{and}
$$
$$
u_0 (x) > -\dfrac{ \langle A^{-1}x,  x \rangle}{2T} + b \cdot x + c, \qquad \forall x\in \R^n\setminus \{x_0\}.
$$
\end{enumerate}
\end{theorem}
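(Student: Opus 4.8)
The plan is to reduce the whole statement to the Hopf--Lax representation of $u_T = S_T^+ u_0$ and to read condition (ii) as an optimality condition for the associated minimization problem. For the quadratic Hamiltonian \eqref{quadratic Hamiltonian} the Legendre transform of $H$ is $L(q) = \frac{1}{2}\langle A^{-1}q, q\rangle$, so, since $u_0\in I_T(u_T)$, the forward solution is given by
$$
u_T(z) = S_T^+ u_0(z) = \inf_{y\in\R^n}\left\{ u_0(y) + \frac{\langle A^{-1}(z-y), z-y\rangle}{2T}\right\}, \qquad z\in\R^n .
$$
The elementary but decisive observation is that, writing $\phi_z(y) := -\frac{\langle A^{-1}(z-y), z-y\rangle}{2T}$, expansion gives $\phi_z(y) = -\frac{\langle A^{-1}y, y\rangle}{2T} + \frac{(A^{-1}z)\cdot y}{T} - \frac{\langle A^{-1}z, z\rangle}{2T}$, which is precisely a paraboloid of the prescribed curvature $-A^{-1}/T$ with linear coefficient $A^{-1}z/T$. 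Hence the functional being minimized above is $u_0(y)-\phi_z(y)$, and through the bijection $z = TAb$ the paraboloids $P(y):=-\frac{\langle A^{-1}y,y\rangle}{2T}+b\cdot y+c$ touching $u_0$ from below correspond exactly to the minimizers of the Hopf--Lax functional, since $P$ and $\phi_z$ differ only by an additive constant.

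For (ii)$\Rightarrow$(i) I would set $z := TAb$. Because $P$ and $\phi_z$ agree up to a constant, the hypothesis that $u_0\geq P$ with equality only at $x_0$ says exactly that $y\mapsto u_0(y)-\phi_z(y)$ attains its infimum uniquely at $y=x_0$; that is, $x_0$ is the \emph{unique} minimizer in the Hopf--Lax formula for $u_T(z)$. I would then invoke the standard equivalence, for the Hopf--Lax formula with strictly convex superlinear $L$, between uniqueness of the minimizer and differentiability of $u_T$ (cf.\ the semiconcavity theory behind Proposition \ref{Prop semiconcave characterization}): uniqueness forces $u_T$ to be differentiable at $z$, with $\nabla u_T(z) = DL\big(\tfrac{z-x_0}{T}\big) = \frac{A^{-1}(z-x_0)}{T}$. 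Solving for $x_0$ gives $x_0 = z - TA\nabla u_T(z) = z - T H_p(\nabla u_T(z))$, i.e.\ $x_0\in X_T(u_T)$.

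The direction (i)$\Rightarrow$(ii) runs the same chain in reverse. If $x_0 = z - TA\nabla u_T(z)$ for a differentiability point $z$ of $u_T$, the same equivalence yields that the Hopf--Lax minimizer at $z$ is unique, and the first-order (envelope) relation $\nabla u_T(z)=\frac{A^{-1}(z-y^\ast)}{T}$ identifies it as $y^\ast = z - TA\nabla u_T(z) = x_0$. Uniqueness of the minimizer is, after restoring the additive constant, exactly the strict inequality in (ii): taking $b = A^{-1}z/T$ and choosing $c$ so that $P(x_0)=u_0(x_0)$ produces a paraboloid with $P(x_0)=u_0(x_0)$ and $u_0(y)>P(y)$ for all $y\neq x_0$.

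The one place demanding care rather than bookkeeping is the equivalence \emph{unique Hopf--Lax minimizer $\iff$ differentiability of $u_T$ at $z$}, together with the identification of that minimizer as $z - TA\nabla u_T(z)$. In one direction, if $y^\ast$ is a minimizer then $x\mapsto u_0(y^\ast)+\frac{\langle A^{-1}(x-y^\ast),x-y^\ast\rangle}{2T}$ is a smooth function lying above $u_T$ and equal to it at $z$, so its gradient $\frac{A^{-1}(z-y^\ast)}{T}$ lies in the superdifferential $D^+u_T(z)$; the converse uses that $u_T$ is semiconcave, so $D^+u_T(z)$ is the convex hull of the reachable gradients, and the map $y^\ast\mapsto \frac{A^{-1}(z-y^\ast)}{T}$ is injective, whence $D^+u_T(z)$ is a singleton precisely when the minimizer is unique. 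I would isolate this correspondence as the technical core, the remaining steps being the algebraic matching of $\phi_z$ with the paraboloid $P$ carried out above.
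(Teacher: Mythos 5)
Your proof is correct and follows essentially the same route as the paper: both read condition \eqref{Thm X_I charac ii} as saying that $x_0$ is the unique minimizer in the Hopf--Lax formula \eqref{Hopf formula} at the point $z=TAb$, and then invoke the equivalence between uniqueness of that minimizer and differentiability of $u_T$ (Proposition \ref{Prop unique min Hopf}), the paper merely organizing this through the general Proposition \ref{Prop X_I charac general} (stated via $p_0\in D^-u_0(x_0)$ for arbitrary convex $H$) before specializing algebraically to the quadratic case. Your closing sketch of the minimizer--differentiability equivalence is not needed as part of the argument, since the paper, like you, takes that fact as known from the semiconcavity theory in \cite{cannarsa2004semiconcave}.
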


\goodbreak

Note that condition \eqref{Thm X_I charac ii} in this theorem is independent of the choice of $u_0\in I_T(u_T)$.
Although the initial condition $\tilde{u}_0 = S_T^- u_T$,
obtained by pulling back the target with the operator $S_T^-$, seems to be a natural choice to identify the points in $X_T(u_T)$, where in view of Theorem \ref{Thm IniData Identif}, all the initial conditions in $I_T(u_T)$ coincide,
we point out that any element in $I_T(u_T)$ would suffice to carry out this
construction.
Another important advantage of this theorem is that, unlike in Theorem \ref{Thm IniData Identif},
the set $X_T(u_T)$ is characterized independently of the points of differentiability of $u_T$.

\subsection{Projection on the set of reachable targets and semiconcave envelopes}
In this subsection, we treat the case when the target $u_T$ is not necessarily reachable.
Recall that if, for instance, $u_T$ is not a semiconcave function, then $I_T(u_T)=\emptyset$
(see Proposition \ref{Prop semiconcave}).
Let us introduce the following composition operator
\begin{equation*}
\begin{array}{cccc}
S_T^+\circ S^-_T : & \Lip(\R^n) & \longrightarrow & \Lip(\R^n) \\
 & u_T & \longmapsto & S_T^+(S^-_T u_T).
\end{array}
\end{equation*}
This operator can be viewed as a projection of $\Lip (\R^n)$ onto the set of reachable targets.

Throughout the paper, for any $u_T\in\Lip(\R^n)$, we will denote
\begin{equation}\label{projection on reachability}
u_T^\ast : = S_T^+(S_T^- u_T),
\end{equation}
the projection of $u_T$ on the set of reachable targets.
Observe that for any $u_T\in\Lip(\R^n)$, the set $I_T(u_T^\ast)$ is nonempty. 
Indeed, by definition, the initial condition $u_0 := S_T^- u_T$ belongs to $I_T(u_T^\ast)$.

As well as Theorem \ref{Thm 2nd reach cond}, our main result in this subsection applies to the one-dimensional case
and to the case of a quadratic Hamiltonian $H$ in any space dimension.
We prove that for any $u_T\in\Lip(\R^n)$, the function $u_T^\ast$ defined in \eqref{projection on reachability} is the viscosity solution of the following fully nonlinear obstacle problem:
\begin{equation}\label{obstacle problem general}
\min \left\{ v - u_T, \ -\lambda_n \left[ D^2 v - \dfrac{[H_{pp}(D v)]^{-1}}{T}\right] \right\} = 0.
\end{equation}
See Definition \ref{Def vis sol obstacle} for the precise definition of viscosity solution to this equation.
We recall that for a $n\times n$ symmetric matrix $X$, 
$\lambda_1[X]$ and $\lambda_n[X]$ 
denote respectively the smallest and the largest eigenvalues of $X$.

Note that in the one-dimensional case,
the equation \eqref{obstacle problem general} can be simply written as
\begin{equation}\label{obstacle problem 1D}
\min \left\{ v - u_T, \ - \partial_{xx} v + \left( T\, H_{pp} (\partial_x v) \right)^{-1} \right\} = 0,
\end{equation}
while in the case of a quadratic Hamiltonian given by \eqref{quadratic Hamiltonian} in any space dimension, 
equation \eqref{obstacle problem general} can be written as
\begin{equation}\label{obstacle problem}
\min \left\{ v - u_T, \ -\lambda_n \left[ D^2 v - \dfrac{A^{-1}}{T} \right] \right\} = 0.
\end{equation}

Analogously to the obstacle problem for the convex envelope, introduced by Oberman in \cite{oberman2007convex}, 
for a given function $f:\R^n \rightarrow \R$, 
the \emph{concave envelope} of $f$ in $\R^n$ is the unique
viscosity solution of the obstacle problem
\begin{equation}\label{concave envelope eq}
\min \{ v - f, \ -\lambda_n[D^2v]\} =0.
\end{equation}
We recall that the concave envelope of $f$ is the smallest concave function which is bounded from below by $f$, i.e.
\begin{equation}\label{convex envelope def}
f^\ast (x):= \inf \{ v(x)\, ; \, v \ \text{concave}, \ v(y)\geq f, \ \text{for all} \ y\in \R^n\}.
\end{equation}
See Figure \ref{FigConcaveEnvelope} for an illustration of a function and its concave envelope.

\begin{remark}\label{Rmk concave Perron}
\begin{enumerate}
\item As we shall prove in Lemma \ref{Lemma semiconcave concave}, for any $T>0$, $u_T\in\Lip (\R^n)$ and any positive definite matrix $A$,
a function $v$ is a viscosity solution of \eqref{obstacle problem}
if and only if the function
$$
w(x):= v(x) - \dfrac{\langle A^{-1}x,x\rangle}{2T} 
$$ 
is a viscosity solution of \eqref{concave envelope eq} with
$$
f(x):= u_T (x) - \dfrac{\langle A^{-1}x, x\rangle}{2T}.
$$
In other words, $v$ is a viscosity solution of \eqref{obstacle problem} if and only if
$w$ is the concave envelope of the function $f$ above defined.
It gives an alternative way to obtain the viscosity solution of \eqref{obstacle problem}
in terms of the concave envelope of the function $f$.

\item In the one-dimensional case, the study of problem \eqref{HamJac eq} for any strictly convex $H$
can be reduced, after a transformation (see Remark \ref{Rmk 1D equiv}), to the case $H(p)= p^2/2$.
Then, equation \eqref{obstacle problem 1D} can also be reduced to equation \eqref{obstacle problem} in dimension one with $A=1$.

\item Since the concave envelope of a function is unique,
we deduce uniqueness of a viscosity solution for problems \eqref{obstacle problem 1D} and \eqref{obstacle problem}.
The existence of a viscosity solution 
can be obtained from Theorem \ref{Thm semiconcave envelope} by applying the operator $S_T^+\circ S_T^-$
to the function $u_T$.
We note that existence and uniqueness can also be deduced directly by means of the Perron's method (see \cite{crandall1992user}).
\end{enumerate}
\end{remark}

In analogy with the notion of concave envelope, for any $T>0$ and any positive definite $n\times n$ matrix $A$, 
we will refer to the viscosity solution of \eqref{obstacle problem}
as the $\frac{A^{-1}}{T}-$semiconcave envelope of $u_T$ in $\R^n$.
Note that being a viscosity solution of \eqref{obstacle problem}
implies in particular semiconcavity with linear modulus and constant 
$\dfrac{\lambda_n(A^{-1})}{T} = \dfrac{1}{T\, \lambda_1(A)}$.
See Subsection \ref{Sec semiconcave 1.2} for a justification of this fact.

Here we state the main result of this subsection, which ensures that the function $u_T^\ast$ defined in \eqref{projection on reachability} is the $\frac{A^{-1}}{T}-$semiconcave envelope of $u_T$ in $\R^n$.

\goodbreak

\begin{theorem}\label{Thm semiconcave envelope}
Let $u_T\in \Lip(\R^n)$ and $T>0$.
Then,  
\begin{enumerate}
\item If $H$ satisfies \eqref{CondH} and the space-dimension is $1$, 
the function $u_T^\ast := S_T^+(S_T^- u_T)$ is the unique viscosity solution of \eqref{obstacle problem 1D}.

\item If $H$ is given by \eqref{quadratic Hamiltonian}, 
the function $u_T^\ast := S_T^+(S_T^- u_T)$ is the unique viscosity solution of \eqref{obstacle problem}.
\end{enumerate}
\end{theorem}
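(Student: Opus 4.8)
The plan is to prove (ii) first and to deduce (i) from it: in one space dimension the transformation \eqref{transform 1D} recalled in Remark \ref{Rmk 1D equiv} reduces a general strictly convex $H$ to $H(p)=p^2/2$, for which \eqref{obstacle problem 1D} is exactly \eqref{obstacle problem} with $n=1$ and $A=1$. So everything rests on identifying $u_T^\ast=S_T^+(S_T^-u_T)$ with the viscosity solution of \eqref{obstacle problem} when $H$ is the quadratic Hamiltonian \eqref{quadratic Hamiltonian}.

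For the quadratic case I would work with the explicit representations of the two semigroups. Setting $Q(z):=\frac{1}{2T}\langle A^{-1}z,z\rangle$, so that $T$ times the Legendre transform of $H$ evaluated at $(x-y)/T$ equals $Q(x-y)$, the Hopf--Lax formula gives $S_T^+u_0(x)=\min_y\{u_0(y)+Q(x-y)\}$ and the backward formula \eqref{Backward Hopf formula} gives $S_T^-u_T(y)=\max_{x'}\{u_T(x')-Q(x'-y)\}$. Substituting the second into the first and expanding $Q$, the quadratic-in-$y$ terms cancel and only a term linear in $y$ survives; this is the decisive algebraic fact, and it is special to the quadratic Hamiltonian. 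After the linear change of variable $p=A^{-1}y/T$ the inner maximum over $x'$ becomes the convex conjugate of $-f$, where $f:=u_T-Q$, and the outer minimum over $y$ produces minus its biconjugate, so that a short Fenchel-duality computation yields $u_T^\ast-Q=-(-f)^{\ast\ast}$, i.e. $u_T^\ast-Q$ is precisely the concave envelope of $f=u_T-Q$. The hypotheses needed for the biconjugate to coincide with the envelope, namely that $-f=Q-u_T$ be coercive and $f$ be bounded above, are immediate since $u_T$ is Lipschitz while $Q$ is a positive definite quadratic. Once this identity holds, Lemma \ref{Lemma semiconcave concave} (recalled in Remark \ref{Rmk concave Perron}) gives at once that $u_T^\ast$ is a viscosity solution of \eqref{obstacle problem}, and uniqueness follows from uniqueness of the concave envelope.

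I expect the complementarity (subsolution) condition to be the delicate point, and the reason to route the argument through the envelope identity above is precisely to avoid establishing it by hand. If one instead verifies the definition of viscosity solution of \eqref{obstacle problem} directly, the two easy halves are: the obstacle inequality $u_T^\ast\ge u_T$, obtained by testing the supremum defining $S_T^-u_T(y)$ at $x'=x$, which gives $S_T^-u_T(y)+Q(x-y)\ge u_T(x)$ for all $y$; and the supersolution inequality $\lambda_n[D^2u_T^\ast-A^{-1}/T]\le 0$ for the second argument of the minimum, which is exactly the reachability condition of Theorem \ref{Thm 2nd reach cond} applied to the reachable target $u_T^\ast$ (recall $S_T^-u_T\in I_T(u_T^\ast)$ by definition of $u_T^\ast$ in \eqref{projection on reachability}). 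The remaining subsolution condition requires that at every point $x_0$ with $u_T^\ast(x_0)>u_T(x_0)$ the semiconcavity bound be saturated in some direction, i.e. $\lambda_n[D^2u_T^\ast-A^{-1}/T]\ge 0$ in the viscosity sense. The optimization structure points the way: if $y_0$ minimizes in $u_T^\ast(x_0)=\min_y\{S_T^-u_T(y)+Q(x_0-y)\}$ and $x_1$ maximizes in $S_T^-u_T(y_0)$, a direct computation gives $u_T^\ast(x_1)=u_T(x_1)$, so the obstacle is active at $x_1$, and $u_T^\ast-Q$ turns out to be affine on the segment joining $x_1$ and $x_0$, which furnishes the flat direction at $x_0$. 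Since converting this flatness into the viscosity inequality is exactly what the concave-envelope characterization encodes, the cleanest and most robust proof remains the representation of the previous paragraph together with Lemma \ref{Lemma semiconcave concave}.
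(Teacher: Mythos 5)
Your treatment of the quadratic case (ii) is correct, and it takes a genuinely different route from the paper. The paper verifies directly that $u_T^\ast$ is a viscosity super- and subsolution of \eqref{obstacle problem}: the supersolution half comes from touching $u_T^\ast$ from above by $x\mapsto S_T^-u_T(y^\ast)+T\,L\left(\frac{x-y^\ast}{T}\right)$, and the delicate subsolution (complementarity) half requires showing that a certain perturbed function has a \emph{unique global} maximizer, which in the quadratic case the paper obtains from concavity via Oberman's theorem. You instead exploit the cancellation of the terms quadratic in $y$ to show, by a Fenchel biconjugate computation, that $u_T^\ast-Q$ is exactly the concave envelope of $f=u_T-Q$, where $Q(x):=\frac{1}{2T}\langle A^{-1}x,x\rangle$ (your coercivity remark does suffice: the convex envelope of $Q-u_T$ is finite everywhere, hence continuous and closed, hence equal to the biconjugate), and you then conclude with Lemma \ref{Lemma semiconcave concave} and uniqueness of the viscosity solution of the concave-envelope equation. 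In effect you prove Corollary \ref{Cor concave envelope} first and deduce the theorem from it, reversing the paper's logical order; this yields a shorter argument that bypasses the complementarity verification entirely, at the price of invoking Oberman's characterization of the concave envelope on all of $\R^n$ --- a fact the paper itself uses (Remark \ref{Rmk concave Perron}), so this is fair game, and there is no circularity since Lemma \ref{Lemma semiconcave concave} is proved independently of Theorem \ref{Thm semiconcave envelope}.

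The gap is in case (i). You dispose of dimension one with a general strictly convex $H$ by appealing to the transformation \eqref{transform 1D} of Remark \ref{Rmk 1D equiv}, but that transformation acts at the level of the conservation law satisfied by $\partial_x u$: it maps entropy solutions to entropy solutions of Burgers, and it does not transport the objects your statement is about. The correspondence $u\mapsto\tilde u$ with $\partial_x\tilde u=H'(\partial_x u)$ is nonlocal in the function values (an antiderivative), additive constants and their evolution in time do not match, and --- crucially --- the pointwise obstacle constraint $v\geq u_T$ in \eqref{obstacle problem 1D} has no counterpart after the transformation: $\tilde v-\tilde u_T$ is an integral of $H'(\partial_x v)-H'(\partial_x u_T)$ and carries no sign information about $v-u_T$. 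The paper's Remark \ref{Rmk concave Perron} alludes to such a reduction, but its proof does not rely on it: the one-dimensional case is handled by a dedicated argument (Case 1 of the paper's proof), using that between two local maxima of a continuous function on $\R$ there is a local minimum, together with the semiconcavity of $u_T^\ast$ and Proposition \ref{Prop unique min Hopf}, to establish the unique-global-maximizer property that fails to follow from concavity when $H_{pp}$ is not constant. Since your own mechanism (the cancellation of the quadratic terms in $y$) is, as you note yourself, special to quadratic Hamiltonians, nothing in your proposal covers case (i); you would need either to carry out the reduction rigorously at the Hamilton--Jacobi level (which breaks down precisely at the obstacle constraint) or to supply a direct one-dimensional argument along the paper's lines.
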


\goodbreak

See an illustration of this result in Examples \ref{Ex4} and \ref{Ex5}.
As a consequence of this theorem and Lemma \ref{Lemma semiconcave concave}, we deduce the following corollary
which, for the case of a quadratic Hamiltonian, gives an alternative way to obtain $u_T^\ast$ in terms of the concave envelope of a certain function.
This allows one to compute the projection of $u_T$ on the set of reachable targets without applying the operators
$S_T^-$ and $S_T^+$.

\begin{corollary}\label{Cor concave envelope}
Let $H$ be given by \eqref{quadratic Hamiltonian}, $u_T\in \Lip(\R^n)$ and $T>0$.
Let $f^\ast$ be the concave envelope of
$$
f(x):= u_T (x) - \dfrac{\langle A^{-1}x, x\rangle}{2T}. 
$$
Then, the function $u_T^\ast$ defined in \eqref{projection on reachability} satisfies
$$
u_T^\ast (x) = f^\ast (x) + \dfrac{\langle A^{-1}x,x\rangle}{2T}, \qquad \text{for all} \ x\in\R^n.
$$
\end{corollary}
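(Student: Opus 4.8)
The plan is to obtain the corollary directly by composing part~(ii) of Theorem~\ref{Thm semiconcave envelope} with the equivalence recorded in Lemma~\ref{Lemma semiconcave concave} (see Remark~\ref{Rmk concave Perron}). These two results are tailored for exactly this purpose: the former identifies $u_T^\ast$ with the viscosity solution of the obstacle problem \eqref{obstacle problem}, while the latter translates \eqref{obstacle problem} into the concave-envelope obstacle problem \eqref{concave envelope eq} by subtracting the quadratic $q(x):=\langle A^{-1}x,x\rangle/(2T)$. The whole argument is therefore a change of unknown by this fixed quadratic shift.

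First I would invoke Theorem~\ref{Thm semiconcave envelope}(ii): since $H$ is the quadratic Hamiltonian \eqref{quadratic Hamiltonian}, the function $u_T^\ast=S_T^+(S_T^- u_T)$ is the unique viscosity solution of
$$
\min\left\{ v - u_T,\ -\lambda_n\!\left[ D^2 v - \frac{A^{-1}}{T}\right]\right\} = 0.
$$
Then I would apply Lemma~\ref{Lemma semiconcave concave} with $v=u_T^\ast$. The lemma asserts that $v$ solves \eqref{obstacle problem} if and only if $w:=v-q$ solves the concave-envelope problem \eqref{concave envelope eq} with obstacle $f=u_T-q$, which is precisely the function $f$ in the statement. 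Consequently
$$
w(x):=u_T^\ast(x)-\frac{\langle A^{-1}x,x\rangle}{2T}
$$
is a viscosity solution of \eqref{concave envelope eq}. Since the viscosity solution of \eqref{concave envelope eq} is exactly the concave envelope $f^\ast$ of $f$, and this solution is unique (as recalled after \eqref{convex envelope def}), I conclude $w\equiv f^\ast$, which rearranges to the asserted identity $u_T^\ast(x)=f^\ast(x)+\langle A^{-1}x,x\rangle/(2T)$.

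Because both ingredients are already established, there is no genuinely hard step; the entire content is the bookkeeping of the quadratic shift, and in particular checking that the obstacle produced by Lemma~\ref{Lemma semiconcave concave} coincides with the $f$ defined in the statement, which it does verbatim. The one point deserving a line of care is the well-definedness of $f^\ast$: although $f$ is not globally Lipschitz (the subtracted quadratic is only locally so), $f$ is continuous and, since $u_T$ grows at most linearly while $\langle A^{-1}x,x\rangle/(2T)$ grows quadratically with $A^{-1}$ positive definite, one has $f(x)\to-\infty$ as $|x|\to\infty$. Hence $f$ is bounded above and admits a finite concave envelope, so the uniqueness statement for \eqref{concave envelope eq} genuinely applies and the identification $w\equiv f^\ast$ is legitimate.
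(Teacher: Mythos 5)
Your proposal is correct and follows exactly the paper's route: the corollary is deduced by combining Theorem~\ref{Thm semiconcave envelope}(ii) with Lemma~\ref{Lemma semiconcave concave} and the uniqueness of the viscosity solution of \eqref{concave envelope eq} as the concave envelope $f^\ast$. Your added remark on the finiteness of $f^\ast$ (since $u_T$ is Lipschitz and the subtracted quadratic dominates, $f$ is bounded above) is a small, sound refinement that the paper leaves implicit.
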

 
For the particular case 
$$H(p)=c\frac{|p|^2}{2}, \qquad \text{with} \ c>0,$$ 
the function $u_T^\ast := S_T^+(S_T^- u_T)$ is the viscosity solution of
\begin{equation*}
\min \left\{ v - u_T, \ -\lambda_n \left[ D^2 v\right] + \dfrac{1}{c\, T} \right\} = 0.
\end{equation*}
This can be deduced from Theorem \ref{Thm semiconcave envelope} and Property \ref{Prop eigenvalues}, together with the fact that
$\lambda_1(c^{-1}I_n)=\lambda_n(c^{-1}I_n)=c^{-1}$.
In this case, $u_T^\ast$ can be identified with the $\frac{1}{c\, T}-$semiconcave envelope of $u_T$ in $\R^n$,
that is, 
the smallest semiconcave function with linear modulus and constant $\frac{1}{c\, T}$  which is bounded from below by $u_T$.

\section{Examples}\label{Sec Examples}

\begin{example}\label{Example 3}
Here we give a particular example of application of Theorem \ref{Thm IniData Identif}.
We consider the one-dimensional case and the Hamiltonian $H(p) = |p|^2/2$.
As time horizon and reachable target we choose $T=0.5$ and
\begin{equation}\label{Ex 3}
u_T(x):= S^+_T u_1(x), \quad \text{where} \quad
u_1(x):=\left\{
\begin{array}{ll}
1-|x+1| & \text{if} \ -2 <x\leq 0 \\
1-|x-1| & \text{if} \ 0 < x<2 \\
0 & \text{else.}
\end{array}
\right.
\end{equation}
Note that $I_T(u_T)\neq \emptyset$, indeed, $u_1\in I_T(u_T)$.
In Figure \ref{Fig3a}, we can see the function $u_T$.
We observe that $u_T$ is differentiable at all points except for $-1$ and $1$.
Computing the lateral derivatives at these two points, the set $X_T(u_T)$ can be easily determined:
$$
X_T(u_T) = \R\setminus \left([-1.5,-0.5]\cup[0.5,1.5] \right).
$$

The function $\tilde{u}_0 := S_T^- u_T$ is represented Figure \ref{Fig3b}.
The restriction of $\tilde{u}_0$ to the set $X_T(u_T)$ is marked by a red line. 
In view of Theorem \ref{Thm IniData Identif}, the functions in $I_T(u_T)$ are those which coincide
with this function on the red line, while they are bigger or equal than it on the black line.
In the same plot, we can also see the function $u_1$, represented by a dotted line,
as another element in $I_T(u_T)$ different to $\tilde{u}_0$.

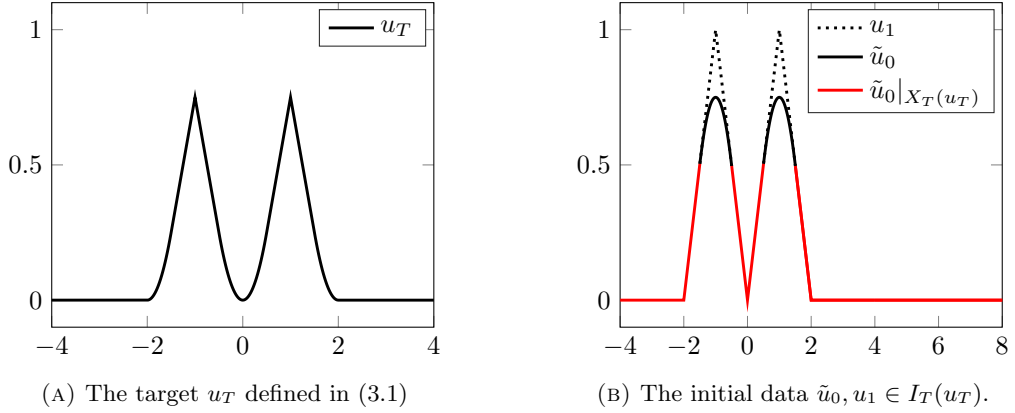
\begin{figure}[h]
\centering
\begin{subfigure}{.5\textwidth}
  \centering
  \begin{tikzpicture}

\begin{axis}[%
width=2in,
height=1.7in,
at={(0.758in,0.481in)},
scale only axis,
xmin=-4,
xmax=4,
ymin=-0.1,
ymax=1.1,
axis background/.style={fill=white},
legend style={legend cell align=left, align=left, draw=white!15!black}
]
\addplot [color=black, line width=1.1pt]
  table[row sep=crcr]{%
-4	0\\
-1.984	0.000256000000000256\\
-1.96	0.00159999999999982\\
-1.936	0.00409599999999966\\
-1.912	0.00774399999999975\\
-1.888	0.0125440000000001\\
-1.864	0.0184959999999998\\
-1.84	0.0255999999999998\\
-1.816	0.0338560000000001\\
-1.792	0.0432639999999997\\
-1.768	0.0538240000000005\\
-1.744	0.0655359999999998\\
-1.72	0.0784000000000002\\
-1.696	0.0924160000000001\\
-1.672	0.107584\\
-1.648	0.123904\\
-1.624	0.141376\\
-1.6	0.16\\
-1.576	0.179776\\
-1.552	0.200704\\
-1.528	0.222784\\
-1.504	0.246016\\
-1	0.750016\\
-0.48	0.2304\\
-0.456	0.207936\\
-0.432	0.186624\\
-0.408	0.166464\\
-0.384	0.147456\\
-0.36	0.1296\\
-0.336	0.112896\\
-0.312	0.0973439999999997\\
-0.288	0.0829439999999995\\
-0.264	0.0696959999999995\\
-0.24	0.0575999999999999\\
-0.216	0.0466559999999996\\
-0.192	0.0368639999999996\\
-0.168	0.0282239999999998\\
-0.144	0.0207360000000003\\
-0.12	0.0144000000000002\\
-0.0960000000000001	0.00921600000000034\\
-0.0720000000000001	0.00518399999999986\\
-0.048	0.00230399999999964\\
-0.024	0.000575999999999688\\
0	0\\
0.024	0.000575999999999688\\
0.048	0.00230399999999964\\
0.0720000000000001	0.00518399999999986\\
0.0960000000000001	0.00921600000000034\\
0.12	0.0144000000000002\\
0.144	0.0207360000000003\\
0.168	0.0282239999999998\\
0.192	0.0368639999999996\\
0.216	0.0466559999999996\\
0.24	0.0575999999999999\\
0.264	0.0696959999999995\\
0.288	0.0829439999999995\\
0.312	0.0973439999999997\\
0.336	0.112896\\
0.36	0.1296\\
0.384	0.147456\\
0.408	0.166464\\
0.432	0.186624\\
0.456	0.207936\\
0.48	0.2304\\
0.504	0.254016\\
1	0.750016\\
1.52	0.2304\\
1.544	0.207936\\
1.568	0.186624\\
1.592	0.166464\\
1.616	0.147456\\
1.64	0.1296\\
1.664	0.112896\\
1.688	0.0973439999999997\\
1.712	0.0829440000000004\\
1.736	0.0696960000000004\\
1.76	0.0575999999999999\\
1.784	0.0466560000000005\\
1.808	0.0368640000000005\\
1.832	0.0282239999999998\\
1.856	0.0207360000000003\\
1.88	0.0144000000000002\\
1.904	0.00921600000000034\\
1.928	0.00518399999999986\\
1.952	0.00230399999999964\\
1.976	0.000575999999999688\\
2	0\\
4	0\\
};
\addlegendentry{$u_T$}

\end{axis}
\end{tikzpicture}%
  \caption{The target $u_T$ defined in \eqref{Ex 3}}
  \label{Fig3a}
\end{subfigure}%
\begin{subfigure}{.5\textwidth}
  \centering
  \begin{tikzpicture}

\begin{axis}[%
width=2in,
height=1.7in,
at={(0.758in,0.481in)},
scale only axis,
xmin=-4,
xmax=8,
ymin=-0.1,
ymax=1.1,
axis background/.style={fill=white},
legend style={legend cell align=left, align=left, draw=white!15!black}
]
\addplot [color=black, dotted, line width=1.1pt]
  table[row sep=crcr]{%
-1.496	0.504\\
-1	1\\
-0.496	0.496\\
};
\addlegendentry{$u_1$}

\addplot [color=black, line width=1.1pt]
  table[row sep=crcr]{%
-1.496	0.504\\
-1.472	0.527232\\
-1.448	0.549312\\
-1.424	0.57024\\
-1.4	0.590016\\
-1.376	0.60864\\
-1.352	0.626112\\
-1.328	0.642432\\
-1.304	0.6576\\
-1.28	0.671616\\
-1.256	0.68448\\
-1.232	0.696192\\
-1.208	0.706752\\
-1.184	0.71616\\
-1.16	0.724416\\
-1.136	0.73152\\
-1.112	0.737472\\
-1.088	0.742272\\
-1.064	0.74592\\
-1.04	0.748416\\
-1.016	0.74976\\
-0.992	0.749952\\
-0.968	0.748992\\
-0.944	0.74688\\
-0.92	0.743616\\
-0.896	0.7392\\
-0.872	0.733632\\
-0.848	0.726912\\
-0.824	0.71904\\
-0.8	0.710016\\
-0.776	0.69984\\
-0.752	0.688512\\
-0.728	0.676032\\
-0.704	0.6624\\
-0.68	0.647616\\
-0.656	0.63168\\
-0.632	0.614592\\
-0.608	0.596352\\
-0.584	0.57696\\
-0.56	0.556416\\
-0.536	0.53472\\
-0.512	0.511872\\
-0.496	0.496\\
};
\addlegendentry{$\tilde{u}_0$}

\addplot [color=red, line width=1.1pt]
  table[row sep=crcr]{%
-4	0\\
-2	0\\
-1.496	0.504\\
};
\addlegendentry{$\tilde{u}_0|_{X_T(u_T)}$}

\addplot [color=black, dotted, line width=1.1pt, forget plot]
  table[row sep=crcr]{%
0.504	0.504\\
1	1\\
1.504	0.496\\
};
\addplot [color=black, line width=1.1pt, forget plot]
  table[row sep=crcr]{%
0.504	0.504\\
0.528	0.527232\\
0.552	0.549312\\
0.576	0.57024\\
0.6	0.590016\\
0.624	0.60864\\
0.648	0.626112\\
0.672	0.642432\\
0.696	0.6576\\
0.72	0.671616\\
0.744	0.68448\\
0.768	0.696192\\
0.792	0.706752\\
0.816	0.71616\\
0.84	0.724416\\
0.864	0.73152\\
0.888	0.737472\\
0.912	0.742272\\
0.936	0.74592\\
0.96	0.748416\\
0.984	0.74976\\
1.008	0.749952\\
1.032	0.748992\\
1.056	0.74688\\
1.08	0.743616\\
1.104	0.7392\\
1.128	0.733632\\
1.152	0.726912\\
1.176	0.71904\\
1.2	0.710016\\
1.224	0.69984\\
1.248	0.688512\\
1.272	0.676032\\
1.296	0.6624\\
1.32	0.647616\\
1.344	0.63168\\
1.368	0.614592\\
1.392	0.596352\\
1.416	0.57696\\
1.44	0.556416\\
1.464	0.53472\\
1.488	0.511872\\
1.504	0.496\\
};
\addplot [color=red, line width=1.1pt, forget plot]
  table[row sep=crcr]{%
-0.496	0.496\\
0	0\\
0.504	0.504\\
};
\addplot [color=red, line width=1.1pt, forget plot]
  table[row sep=crcr]{%
1.504	0.496\\
2	0\\
4	0\\
};
\addplot [color=red, line width=1.1pt, forget plot]
  table[row sep=crcr]{%
4	0\\
8	0\\
};
\addplot [color=red, line width=1.1pt, forget plot]
  table[row sep=crcr]{%
1.504	0.496\\
2	0\\
4	0\\
};
\addplot [color=red, line width=1.1pt, forget plot]
  table[row sep=crcr]{%
4	0\\
8	0\\
};
\end{axis}

\end{tikzpicture}%
  \caption{The initial data $\tilde{u}_0,u_1\in I_T(u_T)$.}
  \label{Fig3b}
\end{subfigure}
\caption{The initial data $\tilde{u}_0$ and $u_1$ satisfy $S_T^+ \tilde{u}_0 = S_T^+ u_1 =u_T$.}
\end{figure}

\end{example}

\begin{example}\label{Example 3.5}
In this example, we give an illustration of the result of Theorem \ref{Thm IniData Identif} for the two-dimensional case. 
We consider the Hamiltonian
$H(p) = |p|^2/2,$
as time horizon we have chosen $T=0.5$, and as target, the function
\begin{equation}\label{Ex 3.5}
u_T:= S_T^+ u_2, \quad \text{where} \quad  
u_2 (x,y):= \left\{ \begin{array}{ll}
|(x,y) - (-2,0)|-1, & \text{if} \ |(x,y) - (-2,0)| < 1 \\
1 -  |(x,y) - (2,0)|, & \text{if} \ |(x,y) - (2,0)| < 1 \\
0,   &   \text{else.}
\end{array}\right.
\end{equation}

Note that $u_T$ is reachable, indeed, $u_2\in I_T(u_T)$.
We have computed numerically the target $u_T = S_T^+ u_2$, the function
$\tilde{u}_0 = S_T^- u_T$ and the set of points $X_T(u_T)$ defined in Theorem \ref{Thm IniData Identif}.
In Figure \ref{Fig3.5a}, we can see that the function $u_T$ has a bump and a well. 
This function is differentiable at all points except for the top of the bump and the circumference around the well. This is due to the semiconcavity of $u_T$ (see Corollary \ref{Cor reach cond} and Remark \ref{Rmk reach cond}).
We have computed, numerically, the projection on $\R^2$ of all the points where $u_T$ is differentiable, by the map
$$
z\longmapsto z-T\, H_p(\nabla u_T(z)).
$$
We have then obtained the set $X_T(u_T)$, represented by the coloured region in Figure \ref{Fig3.5c}.
Finally, we have computed the function $\tilde{u}_0 = S_T^- u_T$ by using formula \eqref{Backward Hopf formula}.
It is represented in Figure \ref{Fig3.5b}.

Now, we can apply Theorem \ref{Thm IniData Identif} in order to construct 
the set of all the initial conditions in $I_T(u_T)$.
The functions in $I_T(u_T)$ are those which coincide with $\tilde{u}_0$ on the set $X_T(u_T)$ (coloured region in Figure \ref{Fig3.5c}), while on its complement (white region) they are bigger or equal than $\tilde{u}_0$.

\begin{figure}[h]
\centering
\begin{subfigure}{.5\textwidth}
  \centering
  \includegraphics[width=4.5cm, height=2.8cm]{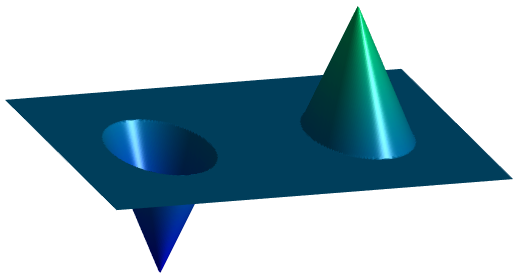}
  \caption{The function $u_2$ defined \\
  in \eqref{Ex 3.5}.}
  \label{Fig3.5d}
\end{subfigure}%
\begin{subfigure}{.5\textwidth}
  \centering
  \includegraphics[width=4.5cm, height=2.8cm]{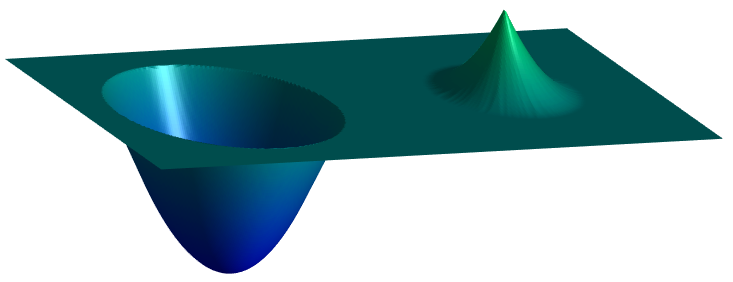}
  \caption{The target $u_T$ defined \\
 as $u_T := S_T^+ u_2$.}
  \label{Fig3.5a}
\end{subfigure}
\begin{subfigure}{.5\textwidth}
  \centering
  \includegraphics[width=4.7cm, height=2.9cm]{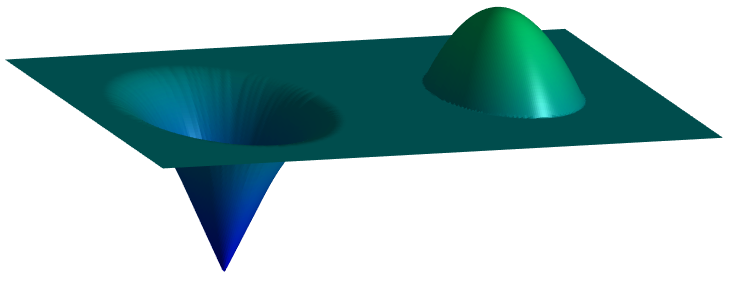}
  \caption{The initial datum $\tilde{u}_0$.}
  \label{Fig3.5b}
\end{subfigure}%
\begin{subfigure}{.5\textwidth}
  \centering
  \includegraphics[width=4.7cm, height=2.9cm]{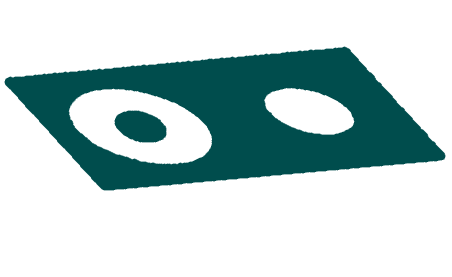}
  \caption{The set $X_T(u_T)$ in dark green.}
  \label{Fig3.5c}
\end{subfigure}
\caption{The initial conditions in $I_T(u_T)$ are those functions which coincide with $\tilde{u}_0$ on the blue region while on its complement they are bigger or equal than it.}
\end{figure}
\end{example}

\begin{example}\label{Ex4}
Here, we illustrate the result of Theorem \ref{Thm semiconcave envelope} for the one-dimensional case and the Hamiltonian $H(p) = |p|^2$.
We have computed the $\frac{1}{T}-$semiconcave envelopes of the functions $u_3$ and $u_4$, defined by
\begin{equation}\label{Example4}
u_3(x) := \left\{ \begin{array}{ll}
|x+1| - 1 & \text{if} \ -2 <x\leq 0 \\
|x-1| - 1 & \text{if} \ 0 < x<2 \\
0 & \text{else.}
\end{array}\right.
\end{equation}

\begin{equation}\label{Example4.5}
u_4(x) := \left\{ \begin{array}{ll}
1-2|x+1| & \text{if} \ -1.5 <x\leq 0 \\
1-2|x-1| & \text{if} \ 0 < x<1.5 \\
0 & \text{else.}
\end{array}\right.
\end{equation} 
with $T=1$ and $T=0.5$ respectively. 

In Figure \ref{Fig4a}, we can see the function $u_3^\ast : = S_T^+(S_T^- u_3)$,
with $T=1$.
In Figure \ref{Fig4b}, we can see the function $u_4^\ast : = S_T^+(S_T^- u_4)$,
with $T=0.5$.
In both plots, the functions $u_3$ and $u_4$ are represented by a dotted line.

We recall that $u_3^\ast$ and $u_4^\ast$ are the projection of $u_3$ and $u_4$ on the set of reachable targets,
and in view of Theorem \ref{Thm semiconcave envelope}, 
they are the viscosity solution of the obstacle problem \eqref{obstacle problem} with $u_T = u_3$ (resp. $u_4$).
The function $u_3^\ast$ (resp. $u_4^\ast$) has been obtained by solving numerically the problem \eqref{Backward HamJac eq} with $u_T = u_3$ (resp. $u_T = u_4$), using formula \eqref{Backward Hopf formula}, and then the problem \eqref{HamJac eq} with $u_0 = S_T^- u_3$ (resp. $u_0 = S_T^- u_4$), using formula \eqref{Hopf formula}.

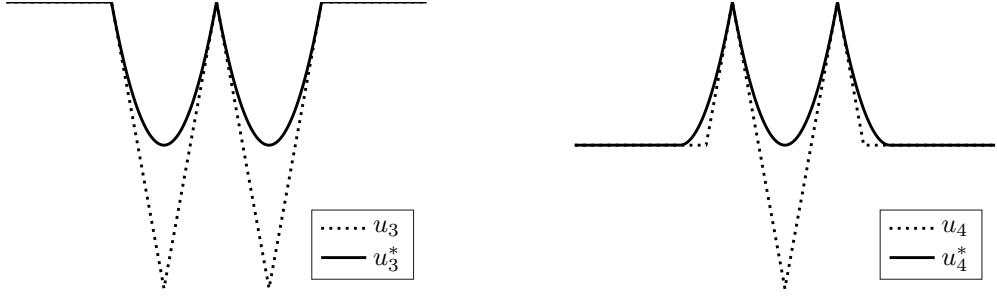
\begin{figure}[h]
\centering
\begin{subfigure}{.5\textwidth}
  \centering
  \begin{tikzpicture}

\begin{axis}[%
width=2.2in,
height=1.5in,
at={(0.758in,0.481in)},
scale only axis,
xmin=-4,
xmax=4,
ymin=-1,
ymax=0,
axis line style={draw=none},
ticks=none,
legend style={at={(0.97,0.03)}, anchor=south east, legend cell align=left, align=left, draw=white!15!black}
]
\addplot [color=black, dotted, line width=1.1pt]
  table[row sep=crcr]{%
-4	0\\
-2	0\\
-1	-1\\
0	0\\
1	-1\\
2	0\\
4	0\\
};
\addlegendentry{$u_3$}

\addplot [color=black, line width=1.1pt]
  table[row sep=crcr]{%
-4	0\\
-2	0\\
-1.968	-0.0314880000000004\\
-1.936	-0.0619519999999998\\
-1.904	-0.0913919999999999\\
-1.872	-0.119808\\
-1.84	-0.1472\\
-1.808	-0.173568\\
-1.776	-0.198912\\
-1.744	-0.223232\\
-1.712	-0.246528000000001\\
-1.68	-0.2688\\
-1.648	-0.290048\\
-1.616	-0.310272\\
-1.584	-0.329472\\
-1.552	-0.347648\\
-1.52	-0.3648\\
-1.488	-0.380928\\
-1.456	-0.396032\\
-1.424	-0.410112\\
-1.392	-0.423168\\
-1.36	-0.4352\\
-1.328	-0.446208\\
-1.296	-0.456192\\
-1.264	-0.465152\\
-1.232	-0.473088\\
-1.2	-0.48\\
-1.168	-0.485888\\
-1.136	-0.490752\\
-1.104	-0.494592\\
-1.08	-0.4968\\
-1.056	-0.498432\\
-1.032	-0.499488\\
-1.008	-0.499968\\
-0.984	-0.499872\\
-0.96	-0.4992\\
-0.936	-0.497952\\
-0.912	-0.496128\\
-0.888	-0.493728\\
-0.864	-0.490752\\
-0.832	-0.485888\\
-0.8	-0.48\\
-0.768	-0.473088\\
-0.736	-0.465152\\
-0.704	-0.456192\\
-0.672	-0.446207999999999\\
-0.640000000000001	-0.4352\\
-0.608000000000001	-0.423168\\
-0.576000000000001	-0.410112\\
-0.544	-0.396032\\
-0.512	-0.380928\\
-0.48	-0.3648\\
-0.448	-0.347648\\
-0.416	-0.329472\\
-0.384	-0.310272\\
-0.352	-0.290048\\
-0.32	-0.2688\\
-0.288	-0.246528\\
-0.256	-0.223231999999999\\
-0.224	-0.198912\\
-0.192	-0.173568\\
-0.16	-0.1472\\
-0.128	-0.119808\\
-0.0960000000000001	-0.0913919999999999\\
-0.0640000000000001	-0.0619519999999998\\
-0.032	-0.0314880000000004\\
0	0\\
0.032	-0.0314880000000004\\
0.0640000000000001	-0.0619519999999998\\
0.0960000000000001	-0.0913919999999999\\
0.128	-0.119808\\
0.16	-0.1472\\
0.192	-0.173568\\
0.224	-0.198912\\
0.256	-0.223231999999999\\
0.288	-0.246528\\
0.32	-0.2688\\
0.352	-0.290048\\
0.384	-0.310272\\
0.416	-0.329472\\
0.448	-0.347648\\
0.48	-0.3648\\
0.512	-0.380928\\
0.544	-0.396032\\
0.576000000000001	-0.410112\\
0.608000000000001	-0.423168\\
0.640000000000001	-0.4352\\
0.672	-0.446207999999999\\
0.704	-0.456192\\
0.736	-0.465152\\
0.768	-0.473088\\
0.8	-0.48\\
0.832	-0.485888\\
0.864	-0.490752\\
0.896	-0.494592\\
0.92	-0.4968\\
0.944	-0.498432\\
0.968	-0.499488\\
0.992	-0.499968\\
1.016	-0.499872\\
1.04	-0.4992\\
1.064	-0.497952\\
1.088	-0.496128\\
1.112	-0.493728\\
1.136	-0.490752\\
1.168	-0.485888\\
1.2	-0.48\\
1.232	-0.473088\\
1.264	-0.465152\\
1.296	-0.456192\\
1.328	-0.446208\\
1.36	-0.4352\\
1.392	-0.423168\\
1.424	-0.410112\\
1.456	-0.396032\\
1.488	-0.380928\\
1.52	-0.3648\\
1.552	-0.347648\\
1.584	-0.329472\\
1.616	-0.310272\\
1.648	-0.290048\\
1.68	-0.2688\\
1.712	-0.246528000000001\\
1.744	-0.223232\\
1.776	-0.198912\\
1.808	-0.173568\\
1.84	-0.1472\\
1.872	-0.119808\\
1.904	-0.0913919999999999\\
1.936	-0.0619519999999998\\
1.968	-0.0314880000000004\\
2	0\\
4	0\\
};
\addlegendentry{$u_3^\ast$}

\end{axis}
\end{tikzpicture}%
  \caption{The $1-$semiconcave envelope of the function \\
   $u_3$ defined in \eqref{Example4}.}
  \label{Fig4a}
\end{subfigure}%
\begin{subfigure}{.5\textwidth}
  \centering
  \begin{tikzpicture}

\begin{axis}[%
width=2.2in,
height=1.5in,
at={(0.758in,0.481in)},
scale only axis,
xmin=-4,
xmax=4,
ymin=-1,
ymax=1,
axis line style={draw=none},
ticks=none,
legend style={at={(0.97,0.03)}, anchor=south east, legend cell align=left, align=left, draw=white!15!black}
]
\addplot [color=black, dotted, line width=1.1pt]
  table[row sep=crcr]{%
-4	-0\\
-1.504	-0\\
-1.496	0.00800000000000001\\
-1	1\\
0	-1\\
1	1\\
1.496	0.00800000000000001\\
1.504	-0\\
4	-0\\
};
\addlegendentry{$u_4$}

\addplot [color=black, line width=1.1pt]
  table[row sep=crcr]{%
-4	0\\
-1.976	0.000575999999999688\\
-1.952	0.00230399999999964\\
-1.928	0.00518399999999986\\
-1.896	0.0108160000000002\\
-1.864	0.0184959999999998\\
-1.832	0.0282239999999998\\
-1.8	0.04\\
-1.768	0.0538240000000005\\
-1.736	0.0696960000000004\\
-1.704	0.0876160000000006\\
-1.672	0.107584\\
-1.64	0.1296\\
-1.608	0.153664\\
-1.576	0.179776\\
-1.544	0.207936\\
-1.512	0.238144\\
-1.48	0.2704\\
-1.448	0.304704\\
-1.416	0.341056\\
-1.384	0.379456\\
-1.352	0.419904\\
-1.32	0.462400000000001\\
-1.28	0.518400000000001\\
-1.24	0.5776\\
-1.2	0.640000000000001\\
-1.16	0.7056\\
-1.12	0.7744\\
-1.08	0.8464\\
-1.04	0.9216\\
-1	1\\
-0.96	0.9216\\
-0.92	0.8464\\
-0.88	0.7744\\
-0.84	0.7056\\
-0.8	0.64\\
-0.76	0.577599999999999\\
-0.72	0.5184\\
-0.68	0.4624\\
-0.640000000000001	0.4096\\
-0.608000000000001	0.369664\\
-0.576000000000001	0.331776000000001\\
-0.544	0.295936\\
-0.512	0.262144\\
-0.48	0.2304\\
-0.448	0.200704\\
-0.416	0.173056\\
-0.384	0.147456\\
-0.352	0.123904\\
-0.32	0.1024\\
-0.288	0.0829439999999995\\
-0.256	0.0655359999999998\\
-0.224	0.0501759999999996\\
-0.192	0.0368639999999996\\
-0.16	0.0255999999999998\\
-0.128	0.0163840000000004\\
-0.0960000000000001	0.00921600000000034\\
-0.0640000000000001	0.00409599999999966\\
-0.04	0.00159999999999982\\
-0.016	0.000256000000000256\\
0.00800000000000001	6.4000000000064e-05\\
0.032	0.00102400000000014\\
0.056	0.00313599999999958\\
0.0800000000000001	0.00640000000000018\\
0.112	0.0125440000000001\\
0.144	0.0207360000000003\\
0.176	0.0309759999999999\\
0.208	0.0432639999999997\\
0.24	0.0575999999999999\\
0.272	0.0739840000000003\\
0.304	0.0924160000000001\\
0.336	0.112896\\
0.368	0.135424\\
0.4	0.16\\
0.432	0.186624\\
0.464	0.215296\\
0.496	0.246016\\
0.528	0.278784\\
0.56	0.3136\\
0.592000000000001	0.350464000000001\\
0.624000000000001	0.389376\\
0.656000000000001	0.430336\\
0.688	0.473344\\
0.728	0.529984\\
0.768	0.589824\\
0.808	0.652864\\
0.848	0.719104\\
0.888	0.788544\\
0.928	0.861184\\
0.968	0.937024\\
1	1\\
1.04	0.9216\\
1.08	0.8464\\
1.12	0.7744\\
1.16	0.7056\\
1.2	0.640000000000001\\
1.24	0.5776\\
1.28	0.518400000000001\\
1.32	0.462400000000001\\
1.36	0.4096\\
1.392	0.369664\\
1.424	0.331776000000001\\
1.456	0.295936\\
1.488	0.262144\\
1.52	0.2304\\
1.552	0.200704\\
1.584	0.173056\\
1.616	0.147456\\
1.648	0.123904\\
1.68	0.1024\\
1.712	0.0829440000000004\\
1.744	0.0655359999999998\\
1.776	0.0501760000000004\\
1.808	0.0368640000000005\\
1.84	0.0255999999999998\\
1.872	0.0163840000000004\\
1.904	0.00921600000000034\\
1.936	0.00409599999999966\\
1.96	0.00159999999999982\\
1.984	0.000256000000000256\\
2.024	0\\
4	0\\
};
\addlegendentry{$u_4^\ast$}

\end{axis}
\end{tikzpicture}%
  \caption{the $2-$semiconcave envelope of the function \\
   $u_4$ defined in \eqref{Example4.5}.}
  \label{Fig4b}
\end{subfigure}
\caption{The function $u_3^\ast$  (resp. $u_4^\ast$) is the smallest reachable target, for $T=1$ (resp. $T=0.5$), bounded from below by $u_3$ (resp. $u_4$).}
\end{figure}

\end{example}

\begin{example}\label{Ex5}
We consider now the two-dimensional case with the Hamiltonian
$$
H(p) := \dfrac{\langle A\, p,p\rangle}{2}, \qquad \text{with} \ 
A=\left(\begin{array}{cc}
2 & 1 \\ 1 & 1
\end{array}\right).
$$
We have computed numerically the image by $S_T^+\circ S_T^-$, with $T=1$, of the function
$$
u_5 (x,y):= \left\{ \begin{array}{ll}
1- |(x,y) - (-1,0)|, & \text{if} \ |(x,y) - (-1,0)| < 1 \\
0.5(1 -  |(x,y) - (1,0)|), & \text{if} \ |(x,y) - (1,0)| < 1 \\
0,   &   \text{else.}
\end{array}\right.
$$
and with $T=0.5$ for the function
$$
u_6 (x,y) = - u_5(x,y).
$$

In Figure \ref{Fig5}, we can see the function $u_5$ at the left and 
its $A^{-1}-$semiconcave envelope $u_5^\ast := S_1^+(S_1^- u_5)$ at the right.
In Figure \ref{Fig6}, we can see the function $u_6$ at the left and 
its $2A^{-1}-$semiconcave envelope $u_6^\ast := S_{0.5}^+(S_{0.5}^- u_6)$ at the right.

\begin{figure}[h]
  \centering
\includegraphics[width=12cm, height=4cm]{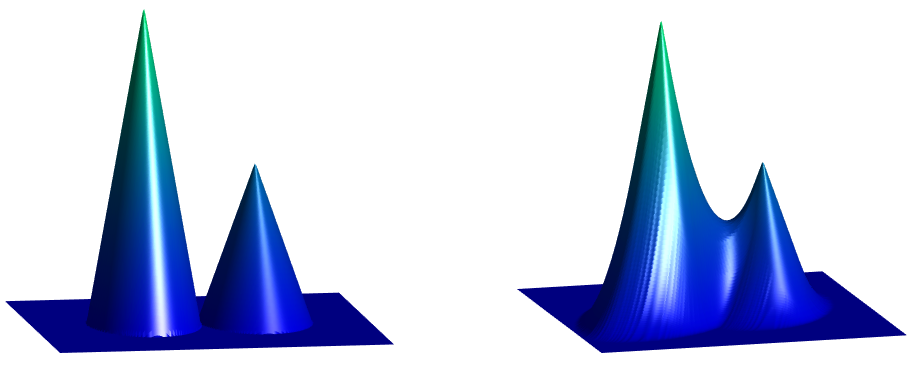}
  \caption{At the left, we see the function $u_5$ from Example \ref{Ex5}. 
  At the right, we see the function $u_5^\ast = S_T^+(S_T^- u_5)$ with $T=1$.}
\label{Fig5}
\end{figure}

\begin{figure}[h]
  \centering
\includegraphics[width=12cm, height=4cm]{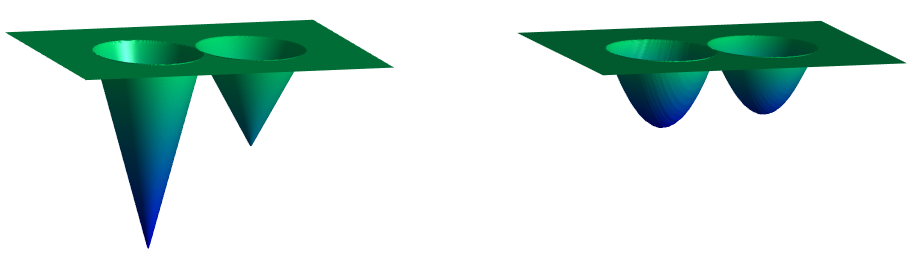}
  \caption{At the left, we see the function $u_6$ from Example \ref{Ex5}. 
  At the right, we see the function $u_6^\ast = S_T^+(S_T^- u_6)$ with $T=0.5$.}
\label{Fig6}
\end{figure}
\end{example}

\section{Forward and backward viscosity solutions}\label{Sec Backward solutions}

We start this section by recalling the definition of viscosity solution for the equation
\begin{equation}\label{HJ eq}
\partial_t u + H(D_x u)=0, \qquad \text{in} \quad [0,T]\times\R^n,
\end{equation}
where $H$ is a continuous function $\R^n\to \R$.

This notion of solution was introduced by Carndall and Lions in \cite{crandall1983viscosity}
and solved the problem of the lack of uniqueness for generalized solutions to the initial-value problem \eqref{HamJac eq}, 
that satisfy the equation \eqref{HJ eq} almost everywhere along with the initial condition $u(0,\cdot)= u_0$.

\begin{definition}\label{Def visc sol HJ}
A uniformly continuous function $u:[0,T]\times\R^n\to \R$
is called a viscosity solution of \eqref{HJ eq} if the following two statements hold:
\begin{enumerate}
\item $u$ is a viscosity subsolution of \eqref{HamJac eq}:
for each $\varphi\in C^\infty ([0,T]\times\R^n)$,
$$
\partial_t \varphi(t_0,x_0) + H(\nabla_x \varphi(t_0,x_0)) \leq 0
$$
whenever $(t_0,x_0)$ is a local maximum of $u-\varphi$.

\item $u$ is a viscosity supersolution of \eqref{HamJac eq}:
for each $\varphi\in C^\infty ([0,T]\times\R^n)$,
$$
\partial_t \varphi(t_0,x_0) + H(\nabla_x \varphi(t_0,x_0)) \geq 0
$$
whenever $(t_0,x_0)$ is a local minimum of $u-\varphi$.
\end{enumerate}
\end{definition}

Throughout the paper we will sometimes refer to viscosity solutions as \emph{forward viscosity solutions},
in contrast with the notion of backward viscosity solution, stated in Definition \ref{Def Backward soluiton}.
In \cite{crandall1983viscosity} (see also \cite{barles1994solutions,cannarsa2004semiconcave,lions1982generalized}),
it is proved the existence and uniqueness of a viscosity solution for the problem \eqref{HamJac eq} for any initial condition $u_0\in\Lip(\R^n)$.
This solution can be obtained by means of the Hopf-Lax formula (see 
\cite{alvarez1999hopf,bardi1984hopf,barles1987uniqueness,lions1982generalized}).
Therefore, the operator $S_T^+$ defined in the introduction can be written as
\begin{equation}\label{Hopf formula}
S_T^+ u_0 (x) = \min_{y\in\R^n} \left[ u_0(y) + T\, L\left(\dfrac{x-y}{T}\right)\right],
\end{equation}
where, the function $L:\R^n\to \R$ is the Legendre transform of $H$, defined as
\begin{equation}\label{Legendre transform}
L(q) := H^\ast (q) = \max_{p\in\R^n} [ q\cdot p - H(p)]. 
\end{equation}
This function corresponds to the Lagrangian in the optimal control problem associated to \eqref{HamJac eq}.
We recall that, under the assumptions \eqref{CondH} on $H$, 
the function $L = H^\ast$ is a convex $C^2$ function satisfying
$$
\lim_{|q|\to\infty} \dfrac{L(q)}{|q|} = +\infty.
$$
See for example Section A.2 in \cite{cannarsa2004semiconcave}. 
We then deduce that the minimum in the Hopf-Lax formula \eqref{Hopf formula} is always attained.

Observe that in the case of a quadratic Hamiltonian of the form \eqref{quadratic Hamiltonian}, an elementary computation gives the Lengendre transform of $H$ as
$$
L(q) = \dfrac{\langle A^{-1}q, q\rangle}{2}.
$$

As announced in the introduction, a key point in our study is the possibility of reversing the direction of time in problem \eqref{HamJac eq}.
This can be done with the notion of \emph{backward viscosity solution} 
(see for example \cite{barron1999regularity}).

\begin{definition}\label{Def Backward soluiton}
A function $w:[0,T]\times\R^n\longrightarrow \R$ is a backward viscosity solution of \eqref{HJ eq} if the function $v$ 
obtained from $w$  by ``reversing the time'', i.e. $v (t,x):=w(T-t,x)$, is a viscosity solution of
$$
\partial_t v- H(D_x v) = 0, \qquad \text{in} \ [0,T]\times\R^n.
$$
\end{definition}

It is clear that a function $w\in C^1([0,T]\times\R^n)$ is a backward viscosity solution if and only if it is a (forward) viscosity solution. 
However, when one deals with non-smooth solutions, both notions of solution are no longer equivalent.
Indeed, viscosity solutions are characterized to be semiconcave, while backward viscosity solutions are semiconvex.
See Proposition \ref{Prop semiconcave} and the Example \ref{Example 1} for an illustration of this phenomenon.
The following characterization of backward viscosity solutions follows immediately from the definition.

\begin{proposition}\label{Prop def backward sol}
A uniformly continuous function $w:[0,T]\times\R^n\to \R$
is a backward viscosity solution to \eqref{HJ eq} if and only if the following two properties hold
\begin{enumerate}
\item for each $\phi\in C^\infty ([0,T]\times\R^n)$,
$$
\partial_t\phi(t_0,x_0) + H(\nabla_x \phi(t_0,x_0)) \geq 0
$$
whenever $(t_0,x_0)$ is a local maximum of $w-\phi$.

\item for each $\phi\in C^\infty ([0,T]\times\R^n)$,
$$
\partial_t\phi(t_0,x_0) + H(\nabla_x \phi(t_0,x_0)) \leq 0
$$
whenever $(t_0,x_0)$ is a local minimum of $w-\phi$.
\end{enumerate}
\end{proposition}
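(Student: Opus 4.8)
The plan is to reduce the statement directly to the two definitions in play, namely the definition of backward viscosity solution (Definition~\ref{Def Backward soluiton}) and of forward viscosity solution (Definition~\ref{Def visc sol HJ}), using only the \emph{time reversal} $s\mapsto T-s$; no new analysis is required, which is why the proposition is advertised as following immediately from the definition. By Definition~\ref{Def Backward soluiton}, $w$ is a backward viscosity solution of \eqref{HJ eq} if and only if $v(s,x):=w(T-s,x)$ is a forward viscosity solution of $\partial_s v - H(D_x v)=0$, which is an equation of the form \eqref{HJ eq} with Hamiltonian $\tilde H:=-H$. So I would first write out what Definition~\ref{Def visc sol HJ} says for $v$ and $\tilde H$: for every $\psi\in C^\infty([0,T]\times\R^n)$, at a local maximum $(s_0,x_0)$ of $v-\psi$ one has $\partial_s\psi(s_0,x_0)-H(\nabla_x\psi(s_0,x_0))\leq 0$ (subsolution), while at a local minimum one has $\partial_s\psi(s_0,x_0)-H(\nabla_x\psi(s_0,x_0))\geq 0$ (supersolution).

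The second step is the change of variables at the level of test functions. Given $\phi\in C^\infty([0,T]\times\R^n)$, I set $\psi(s,x):=\phi(T-s,x)$, which is again smooth, and the assignment $\phi\mapsto\psi$ is an involution of $C^\infty([0,T]\times\R^n)$ onto itself; hence quantifying over all $\psi$ is equivalent to quantifying over all $\phi$. Since $(v-\psi)(s,x)=(w-\phi)(T-s,x)$, the point $(s_0,x_0)$ is a local maximum (resp. minimum) of $v-\psi$ if and only if $(t_0,x_0):=(T-s_0,x_0)$ is a local maximum (resp. minimum) of $w-\phi$. The chain rule then yields $\partial_s\psi(s_0,x_0)=-\partial_t\phi(t_0,x_0)$ and $\nabla_x\psi(s_0,x_0)=\nabla_x\phi(t_0,x_0)$.

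Finally I would substitute these identities into the two inequalities from the first step. At a local maximum of $v-\psi$, the subsolution inequality $\partial_s\psi-H(\nabla_x\psi)\leq 0$ becomes $-\partial_t\phi(t_0,x_0)-H(\nabla_x\phi(t_0,x_0))\leq 0$, i.e. $\partial_t\phi(t_0,x_0)+H(\nabla_x\phi(t_0,x_0))\geq 0$ at the corresponding local maximum $(t_0,x_0)$ of $w-\phi$, which is exactly condition (i). Symmetrically, at a local minimum the supersolution inequality turns into $\partial_t\phi(t_0,x_0)+H(\nabla_x\phi(t_0,x_0))\leq 0$, which is condition (ii); reading the equivalences in the reverse direction gives the converse implication. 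The only point requiring any care, and the sole place a mistake could enter, is the sign bookkeeping: the two sign flips, one from $\tilde H=-H$ and one from $\partial_s\psi=-\partial_t\phi$, are precisely what convert the upper-bound subsolution inequality for $v$ into the reversed lower-bound inequality (i) for $w$, and dually for supersolutions. Once these signs are tracked correctly, the equivalence is immediate.
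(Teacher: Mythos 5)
Your proposal is correct and is exactly the argument the paper has in mind: the paper states that the proposition ``follows immediately from the definition,'' and your careful unwinding of the time reversal $v(s,x)=w(T-s,x)$, the involution $\phi\mapsto\psi(s,x)=\phi(T-s,x)$ on test functions, and the two sign flips (one from $\tilde H=-H$, one from $\partial_s\psi=-\partial_t\phi$) is precisely the omitted verification. The sign bookkeeping checks out in both directions, so nothing is missing.
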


Observe that, if we reverse the inequalities in this proposition,
we obtain exactly the definition of (forward) viscosity solution (Definition \ref{Def visc sol HJ}).
We then deduce that any backward solution $w$ satisfies \eqref{HJ eq} at any point where it is differentiable.
Using the analogous arguments as for the existence and uniqueness of 
(forward) viscosity solutions of \eqref{HamJac eq},
one can prove (see for example \cite{bardi2008optimal,barron1999regularity}) 
existence and uniqueness of a backward viscosity solution for the terminal-value problem
\begin{equation*}
\left\{ \begin{array}{ll}
\partial_t w + H(D_x w) = 0, & \text{in} \ [0,T]\times \R^n, \\
\noalign{\vskip 1.5mm}
w(T,x) = u_T(x), & \text{in} \ \R^n.
\end{array}\right.
\end{equation*}

Hence, for any $T>0$, the nonlinear operator $S_T^-$ defined in Section \ref{Sec Results} is well defined.
Recall that this operator associates, to any $u_T\in\Lip(\R^n)$ the backward viscosity solution of \eqref{Backward HamJac eq} at time $0$.
In addition, $S_T^- u_T$ can be given by the following representation formula, 
which is the analogous to the Hopf-Lax formula \eqref{Hopf formula} for the operator $S_T^+$ (see \cite{barron1999regularity}):
\begin{equation}\label{Backward Hopf formula}
S_T^- u_T (x) = \max_{y\in\R^n} \left[ u_T(y) - T\, L\left(\dfrac{y-x}{T}\right)\right].
\end{equation}

\subsection{Semiconcave and semiconvex functions}
Here we recall the following important property of forward and backward viscosity solutions:

\begin{proposition}\label{Prop semiconcave}
Let $H$ satisfy \eqref{CondH}, $T>0$, and $u_0,u_T\in \Lip(\R^n)$.
Then, for any bounded set $\mathcal{K}\subset \R^n$, 
\begin{enumerate}
\item the function $S^+_T u_0$ defined in \eqref{Hopf formula} is semiconcave in $\mathcal{K}$ with linear modulus;
\item the function $S^-_T u_T$ defined in \eqref{Backward Hopf formula} is semiconvex in $\mathcal{K}$ with linear modulus.
\end{enumerate}
\end{proposition}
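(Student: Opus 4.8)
The plan is to prove both statements through the standard second-difference characterization: a function $f$ is semiconcave with linear modulus and constant $C$ on $\mathcal{K}$ precisely when $f(x+h)+f(x-h)-2f(x)\le C|h|^2$ for all admissible $x,h$, with the reverse inequality characterizing semiconvexity. The structural input I would rely on is that, by \eqref{CondH} and \eqref{Legendre transform}, the Lagrangian $L=H^\ast$ is a convex $C^2$ function with superlinear growth; in particular $D^2 L$ is bounded on every bounded subset of $\R^n$, even though it may blow up at infinity.

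The first step is a uniform bound on the minimizers in the Hopf--Lax formula \eqref{Hopf formula}. Fix $x$ and let $y$ attain the minimum, so that $S_T^+u_0(x)=u_0(y)+T\,L\!\left((x-y)/T\right)$. Comparing with the competitor $y=x$ and using $u_0\in\Lip(\R^n)$ gives $L(q)\le \Lip(u_0)\,|q|+L(0)$ with $q:=(x-y)/T$. Superlinearity of $L$ then forces $|q|\le R$ for a constant $R$ depending only on $\Lip(u_0)$ and $H$, and crucially independent of $x$. I therefore set $M:=\sup_{|q|\le R+1}\lambda_n[D^2L(q)]<\infty$, which is finite since $L\in C^2$.

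Next I run the second-difference estimate. Using the optimal $y$ for $x$ as a (generally non-optimal) competitor for both $x+h$ and $x-h$ in \eqref{Hopf formula}, the $u_0(y)$ terms cancel and one obtains
\begin{equation*}
S_T^+u_0(x+h)+S_T^+u_0(x-h)-2S_T^+u_0(x)\le T\left[L\!\left(q+\tfrac{h}{T}\right)+L\!\left(q-\tfrac{h}{T}\right)-2L(q)\right].
\end{equation*}
A Taylor expansion with integral remainder (in which the first-order terms in $DL(q)$ cancel) bounds the bracket by $M\,|h|^2/T^2$ whenever $|h|\le T$, so the left-hand side is at most $(M/T)\,|h|^2$. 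This proves (i) with constant $M/T$, in fact uniformly in $x$.

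Finally, for (ii) I argue symmetrically on the backward formula \eqref{Backward Hopf formula}. Taking a maximizer $y$ for $x$, the same superlinearity argument bounds $q:=(y-x)/T$ by a constant depending on $\Lip(u_T)$, and using $y$ as a competitor for $x\pm h$ yields the reverse inequality $S_T^-u_T(x+h)+S_T^-u_T(x-h)-2S_T^-u_T(x)\ge -(M/T)\,|h|^2$, whence semiconvexity. The only genuine obstacle is the localization step of the second paragraph: one must rule out that the optimal points $q$ escape to infinity, where $D^2L$ need not be bounded. This is exactly where superlinearity of $L$ together with the Lipschitz bound on the data enters, and it is also what makes the constant independent of the bounded set $\mathcal{K}$.
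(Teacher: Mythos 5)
Your strategy is the right one, and it is essentially the argument behind the result in the reference that the paper cites for this proposition (Chapter 1 of Cannarsa--Sinestrari); the paper itself gives no proof, only the citation. The uniform bound on Hopf--Lax minimizers via superlinearity of $L$, followed by the second-difference estimate in which the optimal $y$ for $x$ is reused as a competitor at $x\pm h$, is exactly the standard route, and those steps are correct as you write them (the attainment of the min and max in \eqref{Hopf formula} and \eqref{Backward Hopf formula}, which you use implicitly, is justified in the paper).

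There is, however, one gap you should close. Your second-difference bound is only proved for $|h|\le T$, because that restriction is what keeps the points $q\pm s h/T$ inside the ball of radius $R+1$ on which you defined $M$. But Definition \ref{Def semiconcave semiconvex} requires the inequality for every $x,h$ with $[x-h,x+h]\subset\mathcal{K}$, and a bounded set $\mathcal{K}$ may well have diameter larger than $2T$; for such $h$ your argument as written says nothing. Moreover, your closing claim that the constant is independent of the bounded set $\mathcal{K}$ rests precisely on this missing range of $h$. Two standard repairs are available. (a) Since $x,x\pm h\in\mathcal{K}$ forces $|h|/T\le \operatorname{diam}(\mathcal{K})/(2T)$, define $M$ instead as the supremum of $\lambda_n[D^2L]$ over the ball of radius $R+\operatorname{diam}(\mathcal{K})/(2T)$; this yields the proposition exactly as stated, with a semiconcavity constant depending on $\mathcal{K}$, which is all that is claimed. (b) Alternatively, keep your $\mathcal{K}$-independent constant $C=M/T$ and observe that your estimate for $|h|\le T$ says that $g(x):=S_T^+u_0(x)-\tfrac{C}{2}|x|^2$ is continuous and satisfies $g(x+h)+g(x-h)-2g(x)\le 0$ for all $x$ and all $|h|\le T$; along every line this makes $g$ locally concave, hence concave on all of $\R^n$, and then Proposition \ref{Prop semiconcave characterization} gives the second-difference inequality for all $h$. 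The symmetric statement handles the semiconvexity of $S_T^-u_T$. Either patch completes the proof.
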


See for example Chapter 1 in \cite{cannarsa2004semiconcave} for a proof of this property.
Let us recall the definition of semiconcavity and semiconvexity with linear modulus.

\begin{definition}\label{Def semiconcave semiconvex}
\begin{enumerate}
\item We say that a function $f:\mathcal{K}\subset\R^n\rightarrow\R$ is semiconcave with linear modulus if it is continuous and 
there exists $C\geq 0$ such that
$$
f(x+h) + f(x-h) - 2f(x) \leq C|h|^2, \qquad \text{for all $x,h\in\R^n$, such that} \ [x-h,x+h]\subset \mathcal{K}.
$$
The constant $C$ above is called a semiconcavity constant for $f$ in $\mathcal{K}$.

\item We say that $f$ is semiconvex if the function $g=-f$ is semiconcave.
\end{enumerate}
\end{definition}

Let us illustrate this property with the following example in dimension 1.

\begin{example}\label{Example 1}
We consider the one-dimensional case and the Hamiltonian $H(p) = |p|^2/2$.
We apply the operators $S^+_T$ and $S_T^-$ to the functions
$u_3$ and $u_4$ from Example \ref{Ex4}, with $T=1$ and $=0.5$ respectively.

In Figure \ref{Fig1}, we can see the function $u_3$ represented in both plots by a dotted line.
In the plot at the left, we can see the function $S_T^+u_3$. 
We observe that it is a semiconcave function (roughly speaking, the second derivative is ``bounded from above'').
However, in the plot at the right, we see that the function $S_T^-u_3$ is a semiconvex function (the second derivative is ``bounded from below'').
The same behaviour is observed in Figure \ref{Fig2} for the function $u_4$ from Example \ref{Ex4}.

\begin{figure}[h]
\centering
\begin{subfigure}{.5\textwidth}
  \centering
  \begin{tikzpicture}

\begin{axis}[%
width=2.8in,
height=1.2in,
at={(0.758in,0.481in)},
scale only axis,
xmin=-4,
xmax=7,
ymin=-1,
ymax=0,
axis line style={draw=none},
ticks=none,
legend style={at={(0.97,0.03)}, anchor=south east, legend cell align=left, align=left, draw=white!15!black}
]
\addplot [color=black, dotted, line width=1.1pt]
  table[row sep=crcr]{%
-4	0\\
-2	0\\
-1	-1\\
0	0\\
1	-1\\
2	0\\
4	0\\
};
\addlegendentry{$u_3$}

\addplot [color=black, line width=1.1pt]
  table[row sep=crcr]{%
-4	0\\
-2.504	0\\
-2.496	-0.00400000000000045\\
-1.968	-0.531488\\
-1.936	-0.561952\\
-1.904	-0.591392\\
-1.872	-0.619808\\
-1.84	-0.6472\\
-1.808	-0.673568\\
-1.776	-0.698912\\
-1.744	-0.723232\\
-1.712	-0.746528000000001\\
-1.68	-0.7688\\
-1.648	-0.790048\\
-1.616	-0.810272\\
-1.584	-0.829472\\
-1.552	-0.847648\\
-1.52	-0.8648\\
-1.488	-0.880928\\
-1.456	-0.896032\\
-1.424	-0.910112\\
-1.392	-0.923168\\
-1.36	-0.9352\\
-1.328	-0.946208\\
-1.296	-0.956192\\
-1.264	-0.965152\\
-1.232	-0.973088\\
-1.2	-0.98\\
-1.168	-0.985888\\
-1.136	-0.990752\\
-1.104	-0.994592\\
-1.08	-0.9968\\
-1.056	-0.998432\\
-1.032	-0.999488\\
-1.008	-0.999968\\
-0.984	-0.999872\\
-0.96	-0.9992\\
-0.936	-0.997952\\
-0.912	-0.996128\\
-0.888	-0.993728\\
-0.864	-0.990752\\
-0.832	-0.985888\\
-0.8	-0.98\\
-0.768	-0.973088\\
-0.736	-0.965152\\
-0.704	-0.956192\\
-0.672	-0.946208\\
-0.640000000000001	-0.9352\\
-0.608000000000001	-0.923168\\
-0.576000000000001	-0.910112\\
-0.544	-0.896032\\
-0.512	-0.880928\\
-0.48	-0.8648\\
-0.448	-0.847648\\
-0.416	-0.829472\\
-0.384	-0.810272\\
-0.352	-0.790048\\
-0.32	-0.7688\\
-0.288	-0.746528\\
-0.256	-0.723232\\
-0.224	-0.698912\\
-0.192	-0.673568\\
-0.16	-0.6472\\
-0.128	-0.619808\\
-0.0960000000000001	-0.591392\\
-0.0640000000000001	-0.561952\\
-0.032	-0.531488\\
0	-0.5\\
0.032	-0.531488\\
0.0640000000000001	-0.561952\\
0.0960000000000001	-0.591392\\
0.128	-0.619808\\
0.16	-0.6472\\
0.192	-0.673568\\
0.224	-0.698912\\
0.256	-0.723232\\
0.288	-0.746528\\
0.32	-0.7688\\
0.352	-0.790048\\
0.384	-0.810272\\
0.416	-0.829472\\
0.448	-0.847648\\
0.48	-0.8648\\
0.512	-0.880928\\
0.544	-0.896032\\
0.576000000000001	-0.910112\\
0.608000000000001	-0.923168\\
0.640000000000001	-0.9352\\
0.672	-0.946208\\
0.704	-0.956192\\
0.736	-0.965152\\
0.768	-0.973088\\
0.8	-0.98\\
0.832	-0.985888\\
0.864	-0.990752\\
0.896	-0.994592\\
0.92	-0.9968\\
0.944	-0.998432\\
0.968	-0.999488\\
0.992	-0.999968\\
1.016	-0.999872\\
1.04	-0.9992\\
1.064	-0.997952\\
1.088	-0.996128\\
1.112	-0.993728\\
1.136	-0.990752\\
1.168	-0.985888\\
1.2	-0.98\\
1.232	-0.973088\\
1.264	-0.965152\\
1.296	-0.956192\\
1.328	-0.946208\\
1.36	-0.9352\\
1.392	-0.923168\\
1.424	-0.910112\\
1.456	-0.896032\\
1.488	-0.880928\\
1.52	-0.8648\\
1.552	-0.847648\\
1.584	-0.829472\\
1.616	-0.810272\\
1.648	-0.790048\\
1.68	-0.7688\\
1.712	-0.746528000000001\\
1.744	-0.723232\\
1.776	-0.698912\\
1.808	-0.673568\\
1.84	-0.6472\\
1.872	-0.619808\\
1.904	-0.591392\\
1.936	-0.561952\\
1.968	-0.531488\\
2	-0.5\\
2.496	-0.00400000000000045\\
2.504	0\\
4	0\\
};
\addlegendentry{$S_T^+u_3$}

\end{axis}

\end{tikzpicture}%
  \caption{The function $S_T^+ u_3$ with $T=1$.}
  \label{Fig1a}
\end{subfigure}%
\begin{subfigure}{.5\textwidth}
  \centering
  \begin{tikzpicture}

\begin{axis}[%
width=2.8in,
height=1.2in,
at={(0.758in,0.481in)},
scale only axis,
xmin=-4,
xmax=7,
ymin=-1,
ymax=0,
axis line style={draw=none},
ticks=none,
legend style={at={(0.97,0.03)}, anchor=south east, legend cell align=left, align=left, draw=white!15!black}
]
\addplot [color=black, dotted, line width=1.1pt]
  table[row sep=crcr]{%
-4	0\\
-2	0\\
-1	-1\\
0	0\\
1	-1\\
2	0\\
4	0\\
};
\addlegendentry{$u_3$}

\addplot [color=black, line width=1.1pt]
  table[row sep=crcr]{%
-4	0\\
-1.976	-0.000288000000000288\\
-1.952	-0.00115200000000026\\
-1.928	-0.00259199999999993\\
-1.904	-0.00460800000000017\\
-1.88	-0.0072000000000001\\
-1.856	-0.0103679999999997\\
-1.824	-0.0154880000000004\\
-1.792	-0.0216320000000003\\
-1.76	-0.0288000000000004\\
-1.728	-0.0369919999999997\\
-1.696	-0.046208\\
-1.664	-0.0564479999999996\\
-1.632	-0.0677120000000002\\
-1.6	-0.0800000000000001\\
-1.568	-0.0933120000000001\\
-1.536	-0.107648\\
-1.504	-0.123008\\
-1.472	-0.139392\\
-1.44	-0.1568\\
-1.408	-0.175232\\
-1.376	-0.194688\\
-1.344	-0.215168\\
-1.312	-0.236672\\
-1.28	-0.2592\\
-1.248	-0.282752\\
-1.216	-0.307328\\
-1.184	-0.332928\\
-1.152	-0.359552\\
-1.12	-0.3872\\
-1.088	-0.415872\\
-1.056	-0.445568\\
-1.024	-0.476288\\
-1	-0.5\\
-0.968	-0.468512\\
-0.936	-0.438048\\
-0.904	-0.408608\\
-0.872	-0.380192\\
-0.84	-0.3528\\
-0.808	-0.326432\\
-0.776	-0.301088\\
-0.744	-0.276768\\
-0.712	-0.253471999999999\\
-0.68	-0.231199999999999\\
-0.648000000000001	-0.209952\\
-0.616000000000001	-0.189728\\
-0.584000000000001	-0.170528\\
-0.552	-0.152352\\
-0.52	-0.1352\\
-0.488	-0.119072\\
-0.456	-0.103968\\
-0.424	-0.0898880000000002\\
-0.392	-0.0768319999999996\\
-0.36	-0.0648\\
-0.328	-0.0537919999999996\\
-0.296	-0.0438080000000003\\
-0.264	-0.0348480000000002\\
-0.232	-0.0269120000000003\\
-0.2	-0.0199999999999996\\
-0.168	-0.0141119999999999\\
-0.136	-0.00924800000000037\\
-0.104	-0.00540800000000008\\
-0.0800000000000001	-0.00319999999999965\\
-0.056	-0.00156799999999979\\
-0.032	-0.000511999999999624\\
-0.00800000000000001	-3.2000000000032e-05\\
0.016	-0.000128000000000128\\
0.04	-0.000799999999999912\\
0.0640000000000001	-0.00204800000000027\\
0.0880000000000001	-0.00387200000000032\\
0.112	-0.00627200000000006\\
0.136	-0.00924800000000037\\
0.168	-0.0141119999999999\\
0.2	-0.0199999999999996\\
0.232	-0.0269120000000003\\
0.264	-0.0348480000000002\\
0.296	-0.0438080000000003\\
0.328	-0.0537919999999996\\
0.36	-0.0648\\
0.392	-0.0768319999999996\\
0.424	-0.0898880000000002\\
0.456	-0.103968\\
0.488	-0.119072\\
0.52	-0.1352\\
0.552	-0.152352\\
0.584000000000001	-0.170528\\
0.616000000000001	-0.189728\\
0.648000000000001	-0.209952\\
0.68	-0.231199999999999\\
0.712	-0.253471999999999\\
0.744	-0.276768\\
0.776	-0.301088\\
0.808	-0.326432\\
0.84	-0.3528\\
0.872	-0.380192\\
0.904	-0.408608\\
0.936	-0.438048\\
0.968	-0.468512\\
1	-0.5\\
1.032	-0.468512\\
1.064	-0.438048\\
1.096	-0.408608\\
1.128	-0.380192\\
1.16	-0.3528\\
1.192	-0.326432\\
1.224	-0.301088\\
1.256	-0.276768000000001\\
1.288	-0.253472\\
1.32	-0.2312\\
1.352	-0.209952\\
1.384	-0.189728\\
1.416	-0.170528\\
1.448	-0.152352\\
1.48	-0.1352\\
1.512	-0.119072\\
1.544	-0.103968\\
1.576	-0.0898880000000002\\
1.608	-0.0768319999999996\\
1.64	-0.0648\\
1.672	-0.0537919999999996\\
1.704	-0.0438080000000003\\
1.736	-0.0348480000000002\\
1.768	-0.0269120000000003\\
1.8	-0.0200000000000005\\
1.832	-0.0141119999999999\\
1.864	-0.00924800000000037\\
1.896	-0.00540800000000008\\
1.92	-0.00319999999999965\\
1.944	-0.00156799999999979\\
1.968	-0.000511999999999624\\
1.992	-3.2000000000032e-05\\
2.088	0\\
4	0\\
};
\addlegendentry{$S_T^- u_3$}

\end{axis}
\end{tikzpicture}%
  \caption{The function $S_T^- u_3$ with $T=1$.}
  \label{Fig1b}
\end{subfigure}
\caption{The dotted line represents the function $u_3$ defined in \eqref{Example4}.}
\label{Fig1}
\end{figure}
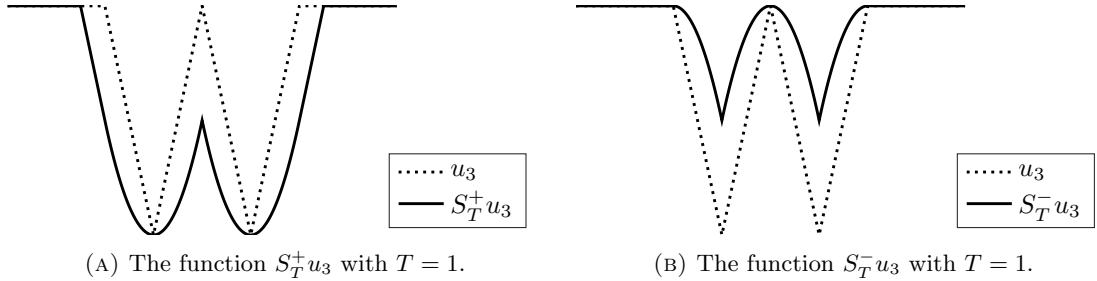

\begin{figure}[h]
\centering
\begin{subfigure}{.5\textwidth}
  \centering
\begin{tikzpicture}

\begin{axis}[%
width=2.2in,
height=1.3in,
at={(0.758in,0.481in)},
scale only axis,
xmin=-4,
xmax=4,
ymin=-1,
ymax=1,
axis line style={draw=none},
ticks=none,
legend style={at={(0.97,0.03)}, anchor=south east, legend cell align=left, align=left, draw=white!15!black}
]
\addplot [color=black, dotted, line width=1.1pt]
  table[row sep=crcr]{%
-4	-0\\
-1.504	-0\\
-1.496	0.00800000000000001\\
-1	1\\
0	-1\\
1	1\\
1.496	0.00800000000000001\\
1.504	-0\\
4	-0\\
};
\addlegendentry{$u_4$}

\addplot [color=black, line width=1.1pt]
  table[row sep=crcr]{%
-4	0\\
-1.48	0.000575999999999688\\
-1.456	0.00230399999999964\\
-1.432	0.00518399999999986\\
-1.4	0.0108160000000002\\
-1.368	0.0184959999999998\\
-1.336	0.0282239999999998\\
-1.304	0.04\\
-1.272	0.0538240000000005\\
-1.24	0.0696960000000004\\
-1.208	0.0876160000000006\\
-1.176	0.107584\\
-1.144	0.1296\\
-1.112	0.153664\\
-1.088	0.173056\\
-1.08	0.16\\
-0.96	-0.0784000000000002\\
-0.92	-0.1536\\
-0.88	-0.2256\\
-0.84	-0.2944\\
-0.8	-0.36\\
-0.76	-0.422400000000001\\
-0.72	-0.4816\\
-0.68	-0.5376\\
-0.640000000000001	-0.5904\\
-0.608000000000001	-0.630336\\
-0.576000000000001	-0.668224\\
-0.544	-0.704064\\
-0.512	-0.737856\\
-0.48	-0.769600000000001\\
-0.448	-0.799296\\
-0.416	-0.826944\\
-0.384	-0.852544\\
-0.352	-0.876096\\
-0.32	-0.8976\\
-0.288	-0.917056000000001\\
-0.256	-0.934464\\
-0.224	-0.949824\\
-0.192	-0.963136\\
-0.16	-0.9744\\
-0.128	-0.983616\\
-0.0960000000000001	-0.990784\\
-0.0640000000000001	-0.995904\\
-0.04	-0.9984\\
-0.016	-0.999744\\
0.00800000000000001	-0.999936\\
0.032	-0.998976\\
0.056	-0.996864\\
0.0800000000000001	-0.9936\\
0.112	-0.987456\\
0.144	-0.979264\\
0.176	-0.969024\\
0.208	-0.956736\\
0.24	-0.9424\\
0.272	-0.926016000000001\\
0.304	-0.907584\\
0.336	-0.887104\\
0.368	-0.864576\\
0.4	-0.84\\
0.432	-0.813376\\
0.464	-0.784704000000001\\
0.496	-0.753984\\
0.528	-0.721216\\
0.56	-0.6864\\
0.592000000000001	-0.649536\\
0.624000000000001	-0.610624\\
0.656000000000001	-0.569664\\
0.688	-0.526656000000001\\
0.728	-0.470016\\
0.768	-0.410176000000001\\
0.808	-0.347136\\
0.848	-0.280896\\
0.888	-0.211456\\
0.928	-0.138816\\
0.968	-0.0629759999999999\\
1.016	0.032\\
1.08	0.16\\
1.088	0.173056\\
1.12	0.147456\\
1.152	0.123904\\
1.184	0.1024\\
1.216	0.0829440000000004\\
1.248	0.0655359999999998\\
1.28	0.0501760000000004\\
1.312	0.0368640000000005\\
1.344	0.0255999999999998\\
1.376	0.0163840000000004\\
1.408	0.00921600000000034\\
1.44	0.00409599999999966\\
1.464	0.00159999999999982\\
1.488	0.000256000000000256\\
1.528	0\\
4	0\\
};
\addlegendentry{$S_T^+u_4$}

\end{axis}
\end{tikzpicture}%
\caption{The function $S_T^+ u_4$ with $T=0.5$.}
  \label{Fig2a}
\end{subfigure}%
\begin{subfigure}{.5\textwidth}
  \centering
\begin{tikzpicture}

\begin{axis}[%
width=2.2in,
height=1.3in,
at={(0.758in,0.481in)},
scale only axis,
xmin=-4,
xmax=4,
ymin=-1,
ymax=1,
axis line style={draw=none},
ticks=none,
legend style={at={(0.97,0.03)}, anchor=south east, legend cell align=left, align=left, draw=white!15!black}
]
\addplot [color=black, dotted, line width=1.1pt]
  table[row sep=crcr]{%
-4	-0\\
-1.504	-0\\
-1.496	0.00800000000000001\\
-1	1\\
0	-1\\
1	1\\
1.496	0.00800000000000001\\
1.504	-0\\
4	-0\\
};
\addlegendentry{$u_4$}

\addplot [color=black, line width=1.1pt]
  table[row sep=crcr]{%
-4	-0\\
-2	-0\\
-1.96	0.0784000000000002\\
-1.92	0.1536\\
-1.88	0.2256\\
-1.84	0.2944\\
-1.8	0.36\\
-1.76	0.422400000000001\\
-1.72	0.4816\\
-1.68	0.5376\\
-1.64	0.5904\\
-1.608	0.630336\\
-1.576	0.668224\\
-1.544	0.704064\\
-1.512	0.737856\\
-1.48	0.769600000000001\\
-1.448	0.799296\\
-1.416	0.826944\\
-1.384	0.852544\\
-1.352	0.876096\\
-1.32	0.8976\\
-1.288	0.917056000000001\\
-1.256	0.934464\\
-1.224	0.949824\\
-1.192	0.963136\\
-1.16	0.9744\\
-1.128	0.983616\\
-1.096	0.990784\\
-1.064	0.995904\\
-1.04	0.9984\\
-1.016	0.999744\\
-0.992	0.999936\\
-0.968	0.998976\\
-0.944	0.996864\\
-0.92	0.9936\\
-0.888	0.987456\\
-0.856	0.979264\\
-0.824	0.969024\\
-0.792	0.956736\\
-0.76	0.9424\\
-0.728	0.926016\\
-0.696	0.907584\\
-0.664000000000001	0.887104\\
-0.632000000000001	0.864576\\
-0.600000000000001	0.84\\
-0.568000000000001	0.813376\\
-0.536	0.784704000000001\\
-0.504	0.753984\\
-0.472	0.721216\\
-0.44	0.6864\\
-0.408	0.649536\\
-0.376	0.610624\\
-0.344	0.569664\\
-0.312	0.526656\\
-0.272	0.470015999999999\\
-0.232	0.410176\\
-0.192	0.347136\\
-0.152	0.280896\\
-0.112	0.211456\\
-0.0720000000000001	0.138816\\
-0.032	0.0629759999999999\\
0	0\\
0.04	0.0784000000000002\\
0.0800000000000001	0.1536\\
0.12	0.2256\\
0.16	0.2944\\
0.2	0.359999999999999\\
0.24	0.4224\\
0.28	0.481599999999999\\
0.32	0.5376\\
0.36	0.5904\\
0.392	0.630336\\
0.424	0.668224\\
0.456	0.704064\\
0.488	0.737856\\
0.52	0.769600000000001\\
0.552	0.799296\\
0.584000000000001	0.826944\\
0.616000000000001	0.852544\\
0.648000000000001	0.876096\\
0.68	0.8976\\
0.712	0.917056\\
0.744	0.934464\\
0.776	0.949824\\
0.808	0.963136\\
0.84	0.9744\\
0.872	0.983616\\
0.904	0.990784\\
0.936	0.995904\\
0.96	0.9984\\
0.984	0.999744\\
1.008	0.999936\\
1.032	0.998976\\
1.056	0.996864\\
1.08	0.9936\\
1.112	0.987456\\
1.144	0.979264\\
1.176	0.969024\\
1.208	0.956736\\
1.24	0.9424\\
1.272	0.926016000000001\\
1.304	0.907584\\
1.336	0.887104\\
1.368	0.864576\\
1.4	0.84\\
1.432	0.813376\\
1.464	0.784704000000001\\
1.496	0.753984\\
1.528	0.721216\\
1.56	0.6864\\
1.592	0.649536\\
1.624	0.610624\\
1.656	0.569664\\
1.688	0.526656\\
1.728	0.470016\\
1.768	0.410176000000001\\
1.808	0.347136\\
1.848	0.280896\\
1.888	0.211456\\
1.928	0.138816\\
1.968	0.0629759999999999\\
2	0\\
4	-0\\
};
\addlegendentry{$S_T^- u_4$}

\end{axis}

\end{tikzpicture}%
\caption{The function $S_T^- u_4$ with $T=0.5$}
  \label{Fig2b}
\end{subfigure}
\caption{The dotted line represents the function $u_4$ defined in \eqref{Example4.5}.}
\label{Fig2}
\end{figure}

\end{example}

The following proposition, 
whose proof can be found in Chapter 1 in \cite{cannarsa2004semiconcave},
gives an interesting characterization of semiconcave functions with linear modulus.
Combining this characterization with Theorem \ref{Thm Oberman convex} in subsection \ref{Sec semiconcave 1.2}, 
it can be easily deduced that any viscosity solution to the differential inequality 
\begin{equation*}
\lambda_n \left[ D^2 v - \dfrac{1}{T}A^{-1}\right] \leq 0,
\end{equation*}
where $A$ is a positive definite matrix, are semiconcave with a linear modulus.

\begin{proposition}\label{Prop semiconcave characterization}
Given a function $f:\R^n\to\R$ and a constant $C\geq 0$,
the following properties are equivalent:
\begin{enumerate}
\item $f$ is semiconcave with linear modulus and constant $C$;
\item the function $x\mapsto f(x) - \dfrac{C}{2}|x|^2$ is concave;
\item there exist two functions $f_1,f_2: \R^n\to \R$ such that $f=f_1+f_2$,
$f_1$ is concave, $f_2\in C^2(\R^n)$ and satisfies $\|D^2f_2\|_\infty \leq C$.
\end{enumerate} 
\end{proposition}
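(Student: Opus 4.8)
The plan is to establish the cycle of implications $(i)\Rightarrow(ii)\Rightarrow(iii)\Rightarrow(i)$, with the function $g(x):=f(x)-\frac{C}{2}|x|^2$ serving as a pivot throughout. The whole argument rests on the elementary parallelogram identity $|x+h|^2+|x-h|^2=2|x|^2+2|h|^2$, together with the classical fact that a continuous midpoint-concave function is concave.

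For the equivalence $(i)\Leftrightarrow(ii)$, I would first use the parallelogram identity to compute, for every $x,h\in\R^n$,
\begin{equation*}
g(x+h)+g(x-h)-2g(x)=f(x+h)+f(x-h)-2f(x)-C|h|^2.
\end{equation*}
Hence the semiconcavity inequality defining $(i)$, namely $f(x+h)+f(x-h)-2f(x)\leq C|h|^2$ for all admissible $x,h$, holds if and only if $g$ satisfies the midpoint-concavity condition $g(x+h)+g(x-h)-2g(x)\leq 0$. Since $f$ is continuous by the definition of semiconcavity with linear modulus, $g$ is continuous, and a continuous midpoint-concave function is concave; this gives $(ii)$. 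Conversely, concavity of $g$ immediately yields the midpoint inequality and hence $(i)$.

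The implication $(ii)\Rightarrow(iii)$ is then immediate: one sets $f_1:=g$, which is concave, and $f_2(x):=\frac{C}{2}|x|^2$, which belongs to $C^2(\R^n)$ with constant Hessian $D^2 f_2=C\,I_n$, so that $\|D^2 f_2\|_\infty=C$ and $f=f_1+f_2$.

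Finally, for $(iii)\Rightarrow(i)$, I would estimate the second difference of $f=f_1+f_2$ termwise. The concave part contributes $f_1(x+h)+f_1(x-h)-2f_1(x)\leq 0$. For the smooth part, applying the one-variable Taylor identity to $\phi(t):=f_2(x+th)$ gives
\begin{equation*}
f_2(x+h)+f_2(x-h)-2f_2(x)=\int_{-1}^{1}(1-|s|)\,\phi''(s)\,ds,
\end{equation*}
and since $\phi''(s)=\langle D^2 f_2(x+sh)\,h,h\rangle\leq\|D^2 f_2\|_\infty|h|^2\leq C|h|^2$ while $\int_{-1}^{1}(1-|s|)\,ds=1$, the smooth part is bounded above by $C|h|^2$. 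Adding the two contributions yields the semiconcavity inequality, and continuity of $f$ follows from continuity of the concave function $f_1$ and of $f_2\in C^2(\R^n)$. I expect the only genuine subtlety to be the passage from midpoint concavity to true concavity of the continuous function $g$ in the step $(i)\Rightarrow(ii)$; every other step reduces to the parallelogram identity and a routine Taylor estimate.
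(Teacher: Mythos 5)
Your proof is correct. Note that the paper does not actually prove this proposition: it simply cites Chapter 1 of Cannarsa--Sinestrari, \emph{Semiconcave functions, Hamilton--Jacobi equations, and optimal control}, where the standard argument is exactly the one you give — the parallelogram identity reducing $(i)\Leftrightarrow(ii)$ to midpoint concavity of $g(x)=f(x)-\frac{C}{2}|x|^2$ plus the classical fact that continuous midpoint-concave functions are concave, the trivial decomposition for $(ii)\Rightarrow(iii)$, and a Taylor estimate for $(iii)\Rightarrow(i)$. So your proposal supplies a correct, self-contained proof along essentially the same lines as the reference the paper points to; all three implications check out, including the integral-remainder computation $f_2(x+h)+f_2(x-h)-2f_2(x)=\int_{-1}^{1}(1-|s|)\langle D^2f_2(x+sh)h,h\rangle\,ds \leq C|h|^2$.
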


\subsection{Proof of Theorem \ref{Thm 1st reach cond}}
This result is a consequence of Proposition \ref{Prop backward stability} below, which ensures that for any initial condition, the process of taking alternatively forward and backward viscosity solutions stabilizes after the first step.

\begin{proposition}\label{Prop backward stability}
Let $H$ satisfy \eqref{CondH}, $u_0\in \Lip(\R^n)$ and $T>0$.
Set the function
\begin{equation*}
\tilde{u}_0 (x) := S_T^-  (S_T^+ u_0) (x), \qquad \text{for} \ x\in\R^n.
\end{equation*}
Then it holds
\begin{equation}\label{backward stab conclusions}
S^+_T u_0 = S^+_T \tilde{u}_0, \qquad \text{and} \qquad u_0 (x) \geq \tilde{u}_0(x), \quad \text{for all} \ x\in \R^n.
\end{equation}
\end{proposition}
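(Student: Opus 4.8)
The plan is to work entirely at the level of the Hopf--Lax representation formulas \eqref{Hopf formula} and \eqref{Backward Hopf formula}, since both conclusions in \eqref{backward stab conclusions} turn out to be elementary consequences of the $\min$/$\max$ structure, with no PDE machinery required. Throughout I write $u_T := S_T^+ u_0$, so that $\tilde{u}_0 = S_T^- u_T$, and I use that for every $x\in\R^n$,
$$
u_T(y) = \min_{z\in\R^n}\Bigl[u_0(z) + T\,L\bigl(\tfrac{y-z}{T}\bigr)\Bigr], \qquad \tilde{u}_0(x) = \max_{y\in\R^n}\Bigl[u_T(y) - T\,L\bigl(\tfrac{y-x}{T}\bigr)\Bigr].
$$
Both extrema are attained because $L=H^\ast$ is superlinear, but the argument will only ever invoke one-sided bounds, so attainment is not essential.

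First I would establish the pointwise inequality $u_0 \geq \tilde{u}_0$. Fix $x\in\R^n$. For each $y$, choosing the admissible competitor $z = x$ in the minimization defining $u_T(y)$ gives $u_T(y) \leq u_0(x) + T\,L(\tfrac{y-x}{T})$, whence $u_T(y) - T\,L(\tfrac{y-x}{T}) \leq u_0(x)$. Taking the maximum over $y$ yields $\tilde{u}_0(x) \leq u_0(x)$. The point that makes this work is that the argument of $L$ is exactly $\tfrac{y-x}{T}$ in both the forward and the backward formula, so the two penalizing terms cancel identically.

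It remains to prove $S_T^+\tilde{u}_0 = u_T$, which I would split into two inequalities. The easy direction $S_T^+\tilde{u}_0 \leq u_T$ is immediate from the monotonicity of $S_T^+$ (evident from \eqref{Hopf formula}, since $f\leq g$ implies $S_T^+ f \leq S_T^+ g$ pointwise) applied to the inequality $\tilde{u}_0 \leq u_0$ already obtained, together with $S_T^+ u_0 = u_T$. For the reverse inequality $S_T^+\tilde{u}_0 \geq u_T$, fix $x$ and any competitor $w$ in the minimization defining $S_T^+\tilde{u}_0(x)$; choosing $y = x$ in the maximization defining $\tilde{u}_0(w)$ gives $\tilde{u}_0(w) \geq u_T(x) - T\,L(\tfrac{x-w}{T})$, so that $\tilde{u}_0(w) + T\,L(\tfrac{x-w}{T}) \geq u_T(x)$. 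Taking the minimum over $w$ gives $S_T^+\tilde{u}_0(x) \geq u_T(x)$.

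The least automatic step is this last inequality $S_T^+\tilde{u}_0 \geq u_T$, which expresses that reconstructing an initial datum by $S_T^-$ and then evolving it forward recovers at least the target; as before, the proof hinges on the observation that the same function $L$ appears with matching arguments in the backward and the forward formula, so that the term added by the forward evolution exactly compensates the one subtracted by the backward evolution. Combining the two inequalities gives $S_T^+\tilde{u}_0 = u_T = S_T^+ u_0$, which together with $u_0\geq \tilde{u}_0$ establishes both assertions of \eqref{backward stab conclusions}.
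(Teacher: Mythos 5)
Your proof is correct and follows essentially the same route as the paper: both work purely at the level of the Hopf--Lax formulas \eqref{Hopf formula} and \eqref{Backward Hopf formula}, proving $u_0\geq\tilde{u}_0$ by inserting a competitor and exploiting the matching arguments of $L$, getting $S_T^+\tilde{u}_0\leq S_T^+u_0$ from monotonicity (the comparison principle), and obtaining the reverse inequality from the same cancellation of the $L$-terms. The only cosmetic difference is that in this last step the paper evaluates at an attained minimizer $y^\ast$ of the Hopf--Lax formula for $S_T^+\tilde{u}_0(x_0)$, whereas you bound an arbitrary competitor and then take the infimum, which avoids any appeal to attainment of the minimum.
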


The following diagram illustrates this property, that was already proved in \cite{barron1999regularity} for a more general setting.
We have included here the proof for completeness.

\[
\begin{tikzcd}
 u_0 \arrow[bend left]{rrrd}{S_T^+}  \\
S_T^-  (S_T^+ u_0)= \tilde{u}_0 \arrow[bend left,swap]{rrr}{S_T^+} &  &  & S_T^+ u_0 = u_T \arrow[bend left]{lll}{S_T^-}
\end{tikzcd}
\]

\begin{proof}
By formula \eqref{Hopf formula} we have, for any $x\in \R^n$,
$$
S_T^+ u_0 (x) \leq u_0(y) + T \, L\left( \dfrac{x-y}{T}\right), \quad \text{for all} \  y\in \R^n.
$$
By the arbitrariness of $x$, we deduce
$$
u_0(y) \geq S_T^+ u_0 (x) - T \, L\left( \dfrac{x-y}{T}\right), \quad \text{for all} \ x,y\in\R^n.
$$
Now, for any fixed $y\in\R^n$, taking the maximum over $x$ in the right hand side of the above inequality,
and in view of formula \eqref{Backward Hopf formula}, we obtain $u_0(y) \geq S^-_T (S_T^+ u_0) (y)$
for any $y\in\R^n$.
The inequality in \eqref{backward stab conclusions} is then proved.

Now, we use the comparison principle for the viscosity solutions, which can be deduced directly from \eqref{Hopf formula} (see also \cite{lions1982generalized} for more details). We obtain
\begin{equation*}
S_T^+ u_0(x) \geq S_T^+ \tilde{u}_0 (x), \quad
\text{for all} \ x\in \R^n.
\end{equation*}

For the reversed inequality, fix $x_0\in \R^n$ 
and let $y^\ast$ be a minimizer in the right hand side of \eqref{Hopf formula}.
We have
\begin{equation}\label{equlity proof lemma}
S^+_T \tilde{u}_0 (x_0) = \tilde{u}_0 (y^\ast) + T\, L\left( \dfrac{x_0-y^\ast}{T} \right).
\end{equation}
Then, from the definition of $\tilde{u}_0$ and formula  \eqref{Backward Hopf formula}, it follows
$$
\tilde{u}_0 (y^\ast) = \max_{x\in \R^n} 
\left[ S_T^+ u_0(x) - T\, L\left( \dfrac{x-y^\ast}{T}\right) \right] 
\geq S_T^+ u_0(x_0) - T\, L\left( \dfrac{x_0-y^\ast}{T}\right).
$$
And combining this inequality with \eqref{equlity proof lemma}, we obtain
$$
S_T^+ u_0(x_0) \leq \tilde{u}_0 (y^\ast) + T\, L\left(\dfrac{x_0-y^\ast}{T} \right) = S_T^+ \tilde{u}_0 (x_0).
$$ 
\end{proof}

Using the analogous arguments as in the previous proof, based on the formulas  \eqref{Hopf formula} and \eqref{Backward Hopf formula},
we can obtain the following similar result.
In this case, we start with a terminal condition $u_T\in\Lip(\R^n)$ and apply first the operator $S_T^-$,
and then the operator $S_T^+$.
This result will be useful in Section \ref{Sec semiconcave} for the proof of Theorem \ref{Thm semiconcave envelope}.

\begin{proposition}\label{Prop forward stability}
Let $H$ satisfy \eqref{CondH}, $u_T\in \Lip(\R^n)$ and $T>0$.
Set the function
\begin{equation*}
u_T^\ast (x) := S_T^+  (S_T^- u_T) (x), \qquad \text{for} \ x\in\R^n.
\end{equation*}
Then it holds
\begin{equation}\label{forward stab conclusions}
S^-_T u_T = S^-_T u_T^\ast, \qquad \text{and} \qquad u_T (x) \leq u^\ast_T(x), \quad \text{for all} \ x\in \R^n.
\end{equation}
\end{proposition}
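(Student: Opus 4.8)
The plan is to dualize the argument used for Proposition \ref{Prop backward stability}, exchanging the roles of the forward formula \eqref{Hopf formula} and the backward formula \eqref{Backward Hopf formula}. Two ingredients are needed: the pointwise lower bound $u_T \leq u_T^\ast$, and the equality $S_T^- u_T = S_T^- u_T^\ast$. Both follow from elementary manipulations of the two Hopf--Lax formulas, the only subtlety being that \eqref{Backward Hopf formula} carries a sign change and a maximum rather than a minimum, so the direction of each inequality must be tracked carefully.

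First I would establish $u_T(x) \leq u_T^\ast(x)$. Starting from \eqref{Backward Hopf formula}, for every $y\in\R^n$ one has, by keeping only the term $z=x$ in the maximum defining $S_T^- u_T(y)$,
$$
S_T^- u_T(y) \geq u_T(x) - T\,L\left(\frac{x-y}{T}\right).
$$
Rearranging gives $S_T^- u_T(y) + T\,L\left(\frac{x-y}{T}\right) \geq u_T(x)$ for every $y$, and taking the minimum over $y$ in the left-hand side, which by \eqref{Hopf formula} equals $u_T^\ast(x) = S_T^+(S_T^- u_T)(x)$, yields the claimed inequality $u_T(x)\leq u_T^\ast(x)$.

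Next, for the equality $S_T^- u_T = S_T^- u_T^\ast$, I would argue by two inequalities. Since $u_T \leq u_T^\ast$ and the backward operator $S_T^-$ is order-preserving (which follows directly from the maximum in \eqref{Backward Hopf formula}, exactly as the comparison principle was deduced from \eqref{Hopf formula} in the previous proof), one immediately gets $S_T^- u_T \leq S_T^- u_T^\ast$. For the reverse inequality I fix $x_0\in\R^n$ and choose a maximizer $y^\ast$ in \eqref{Backward Hopf formula}, so that
$$
S_T^- u_T^\ast(x_0) = u_T^\ast(y^\ast) - T\,L\left(\frac{y^\ast - x_0}{T}\right).
$$
Using $u_T^\ast = S_T^+(S_T^- u_T)$ together with \eqref{Hopf formula}, and keeping only $z=x_0$ in the minimum, gives $u_T^\ast(y^\ast) \leq S_T^- u_T(x_0) + T\,L\left(\frac{y^\ast - x_0}{T}\right)$. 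Substituting this bound into the display above, the two Lagrangian terms cancel and we obtain $S_T^- u_T^\ast(x_0) \leq S_T^- u_T(x_0)$. Combining the two inequalities proves the equality.

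I do not expect a genuine obstacle here: the statement is the mirror image of Proposition \ref{Prop backward stability}, and the proof reduces to a routine application of the two representation formulas together with the order-preserving property of $S_T^-$. The only point requiring attention is bookkeeping the sign change and the $\max$/$\min$ swap in \eqref{Backward Hopf formula} versus \eqref{Hopf formula}, which dictates the correct choice of the distinguished point ($z=x$ inside the maximum for the lower bound, $z=x_0$ inside the minimum for the reverse inequality).
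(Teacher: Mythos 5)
Your proof is correct and follows exactly the route the paper intends: the paper omits the proof of Proposition \ref{Prop forward stability}, stating only that it follows by ``analogous arguments'' to Proposition \ref{Prop backward stability} based on formulas \eqref{Hopf formula} and \eqref{Backward Hopf formula}, and your argument is precisely that dualization (one-term bound plus rearrangement for $u_T\leq u_T^\ast$, then monotonicity of $S_T^-$ and an extremizer argument for the equality), with the $\max$/$\min$ and sign bookkeeping handled correctly.
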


We conclude this section with the proof of Theorem \ref{Thm 1st reach cond}.

\begin{proof}[Proof of Theorem \ref{Thm 1st reach cond}]
The conclusion follows immediately from Proposition \ref{Prop backward stability}.
Indeed, if $I_T(u_T)\neq \emptyset$, then there exists $u_0$ such that $S^+_T u_0 = u_T$.
Now, setting the function $\tilde{u}_0 = S_T^-(S_T^+ u_0)$, after Proposition \ref{Prop backward stability} we obtain
$$S_T^+ (S_T^- u_T) = S_T^+(S_T^- (S_T^+ u_0)) = S_T^+ \tilde{u}_0 = S_T^+ u_0 = u_T.$$
Reversely, if $u_T$ satisfies $S_T^+(S_T^- u_T) = u_T$, we have $S_T^- u_T \in I_T(u_T)$. And the proof is concluded.
\end{proof}

\section{Initial data construction}\label{Sec IniDataConstruction}

Our goal in this section is to prove Theorems \ref{Thm IniData Identif} and \ref{Thm X_I charac}
that, for a time horizon $T>0$ and a reachable target $u_T$, 
give a characterization of the set $I_T(u_T)$ defined in \eqref{Initial conditions}.
For the proofs of these two theorems, we will use the following well-known 
property of viscosity solutions and Hopf-Lax formula.
The proof of this property can be found, for example, in \cite{cannarsa2004semiconcave}.

\begin{proposition}\label{Prop unique min Hopf}
Let $H$ satisfy  \eqref{CondH}, $u_0\in \Lip (\R^n)$ and $T>0$.
For any fixed $x\in \R^n$, the minimizer in the right hand side of \eqref{Hopf formula} 
is unique if and only if the function $S_T^+ u_0(\cdot)$ is differentiable at $x$.
Moreover, in this case, the minimizer is given by $y^\ast = x-T\, H_p(\nabla S^+_T u_0(x))$.
\end{proposition}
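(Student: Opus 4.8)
Write $\phi := S_T^+ u_0$ and, for each $y\in\R^n$, set $g_y(x) := u_0(y) + T\,L\!\left(\tfrac{x-y}{T}\right)$, so that $\phi(x) = \min_{y} g_y(x)$ by \eqref{Hopf formula}, each $g_y$ is $C^2$ in $x$, and $\nabla g_y(x) = \nabla L\!\left(\tfrac{x-y}{T}\right)$. The whole argument rests on two facts from Legendre duality under \eqref{CondH}: the map $H_p = \nabla H$ is a $C^1$ diffeomorphism of $\R^n$, and its inverse is $\nabla L$, so that
\[
q = \nabla L(v) \quad\Longleftrightarrow\quad v = H_p(q).
\]
In particular, from a vector $q = \nabla L((x-y)/T)$ one recovers $y = x - T\,H_p(q)$ uniquely. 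The minimum in \eqref{Hopf formula} is attained because the superlinearity of $L$ in \eqref{CondH} dominates the at most linear growth of $u_0$, making the minimization coercive.

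First I would prove \emph{differentiability $\Rightarrow$ uniqueness}, together with the formula for $y^\ast$. Fix $x$ and let $y^\ast$ be any minimizer, so $\phi(x) = g_{y^\ast}(x)$ while $\phi \le g_{y^\ast}$ everywhere; thus $g_{y^\ast}$ touches $\phi$ from above at $x$. Writing $g_{y^\ast}(x+z) - g_{y^\ast}(x) = \nabla L((x-y^\ast)/T)\cdot z + o(|z|)$ and using $\phi(x+z) - \phi(x) \le g_{y^\ast}(x+z) - g_{y^\ast}(x)$, differentiability of $\phi$ at $x$ gives $\nabla\phi(x)\cdot z \le \nabla L((x-y^\ast)/T)\cdot z$ for every $z$ (after rescaling $z\to tz$, $t\to 0^+$); applying this to $z$ and $-z$ forces $\nabla\phi(x) = \nabla L((x-y^\ast)/T)$. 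By the duality above this pins down $y^\ast = x - T\,H_p(\nabla\phi(x))$, a value independent of the chosen minimizer; hence the minimizer is unique and given by the claimed formula.

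For the converse, \emph{uniqueness $\Rightarrow$ differentiability}, I would combine the semiconcavity of $\phi$ (Proposition \ref{Prop semiconcave}) with a stability argument for minimizers. If $x_k \to x$ and $y_k$ minimizes at $x_k$, then coercivity keeps $(y_k)$ bounded and joint continuity of $(x,y)\mapsto g_y(x)$ shows every limit point of $(y_k)$ minimizes at $x$. Now take $x_k\to x$ ranging over differentiability points of $\phi$ (a full-measure set by Rademacher): at each such point the minimizer $y_k$ is unique and, by the first part, $\nabla\phi(x_k) = \nabla L((x_k-y_k)/T)$. Uniqueness of the minimizer at $x$ forces $y_k \to y^\ast$, whence $\nabla\phi(x_k) \to \nabla L((x-y^\ast)/T)$. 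Thus every reachable gradient of $\phi$ at $x$ equals the single vector $\nabla L((x-y^\ast)/T)$. Since for a semiconcave function the superdifferential is the convex hull of the reachable gradients, and differentiability at $x$ is equivalent to this superdifferential being a singleton (Chapter~3 in \cite{cannarsa2004semiconcave}), $\phi$ is differentiable at $x$.

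The main obstacle is precisely this converse direction: passing from uniqueness of the minimizer to differentiability is not a pointwise algebraic identity but relies on the structure theory of semiconcave functions, namely the identification of the superdifferential with the convex hull of limiting gradients and the criterion that a singleton superdifferential implies differentiability. The forward implication and the explicit formula $y^\ast = x - T\,H_p(\nabla\phi(x))$, by contrast, follow directly from the touching-function estimate and Legendre duality.
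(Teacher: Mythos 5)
The paper itself gives no proof of this proposition: it is stated as a well-known property and the proof is deferred to \cite{cannarsa2004semiconcave}, so there is no internal argument to compare against. Judged on its own, your proof is correct, and it is essentially the standard argument found in that reference. The forward implication is clean: any minimizer $y^\ast$ makes $g_{y^\ast}$ a $C^2$ function touching $\phi = S_T^+u_0$ from above at $x$, so differentiability of $\phi$ at $x$ forces $\nabla\phi(x) = \nabla L((x-y^\ast)/T)$, and inverting via Property \ref{Prop Legendre} (namely $H_p = (L_q)^{-1}$) pins down $y^\ast = x - T\,H_p(\nabla\phi(x))$ independently of which minimizer you started with, giving both uniqueness and the formula. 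The converse is where the real content lies, and your chain of reasoning holds: coercivity (the global Lipschitz bound on $u_0$ plus superlinearity of $L$) keeps the minimizers $y_k$ at differentiability points $x_k \to x$ bounded; any limit point of $(y_k)$ minimizes at $x$ by continuity of $\phi$ and of $(x,y)\mapsto g_y(x)$; uniqueness of the minimizer at $x$ then collapses the set of reachable gradients $D^{*}\phi(x)$ to the single vector $\nabla L((x-y^\ast)/T)$; and the structure theory of semiconcave functions ($D^{+}\phi(x) = \mathrm{co}\, D^{*}\phi(x)$, with singleton superdifferential equivalent to differentiability) concludes. Two points worth making explicit if you write this up: Proposition \ref{Prop semiconcave} gives semiconcavity only on bounded sets, i.e.\ local semiconcavity with linear modulus, which is exactly the setting the Chapter 3 results of \cite{cannarsa2004semiconcave} require; and the coercivity bound must be locally uniform in the base point $x_k$, which indeed follows since $u_0$ is globally Lipschitz and $L$ is superlinear.
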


Let us now proceed with the proof of Theorem \ref{Thm IniData Identif}:

\begin{proof}[Proof of Theorem \ref{Thm IniData Identif}]
First of all note that, after Theorem \ref{Thm 1st reach cond}, $I_T(u_T)\neq \emptyset$
implies $S_T^+ \tilde{u}_0 =u_T$.

\textit{Step 1: \eqref{Thm IniData Identif u0 geq uast} implies \eqref{Thm IniData Identif u_0 in I_T}.}
Let $u_0\in\Lip (\R^n)$ be any function satisfying the condition \eqref{Thm IniData Identif u0 geq uast} in the enunciate.
Since $u_0\geq \tilde{u}_0$, it follows from the comparison principle (see \cite{lions1982generalized}) that
$$
S_T^+ u_0(x) \geq S_T^+ \tilde{u}_0 (x) = u_T (x), \qquad \text{for all} \ x\in\R^n.
$$

Let us prove the reversed inequality.
Fix any $x_0\in\R^n$ such that $u_T$ is differentiable at $x_0$, and consider $y_0 = x_0 -T\,H_p(\nabla u_T(x_0))$.
Note that $y_0\in X_T(u_T)$ and then, by the assumption \eqref{Thm IniData Identif u0 geq uast},
we have $u_0 (y_0) = \tilde{u}_0(y_0)$.

Since $u_T = S_T^+ \tilde{u}_0$ is differentiable at $x_0$,
we know by Proposition \ref{Prop unique min Hopf} that $y_0$ is the unique minimizer in the Hopf-Lax formula
\eqref{Hopf formula} applied to $\tilde{u}_0$.
Hence, 
$$
u_T(x_0) = \tilde{u}_0 (y_0) +T\, L\left( \dfrac{x_0-y_0}{T}\right).
$$

Applying now  the formula \eqref{Hopf formula} to the function $u_0$,
and using $u_0(y_0) = \tilde{u}_0 (y_0)$, we obtain
$$
S_T^+ u_0(x_0) \leq u_0(y_0) +T\, L\left( \dfrac{x_0-y_0}{T}\right) 
= \tilde{u}_0(y_0) + T\, L\left( \dfrac{x_0-y_0}{T}\right) = u_T(x_0).
$$

We have proved that $S_T^+ u_0 (x) \leq u_T(x)$ for all $x$ where $u_T$ is differentiable.
Since $u_T$ is Lipschitz continuous, by Rademacher's Theorem, $u_T$ is differentiable almost everywhere in $\R^n$,
and then, by the continuity of viscosity solutions, we conclude that $S_T^+ u_0 (x) \leq u_T(x)$ for all $x\in\R^n$.
 
\textit{Step 2: \eqref{Thm IniData Identif u_0 in I_T} implies \eqref{Thm IniData Identif u0 geq uast}.}
Let $u_0\in I_T(u_T)$.
Recall that, by Lemma \ref{Prop backward stability}, we have 
\begin{equation}\label{step 2 IniData}
S_T^+ u_0 = S_T^+ \tilde{u}_0 =u_T \qquad \text{and} \qquad  
u_0 (x) \geq \tilde{u}_0 (x), \ \text{for all} \ x\in \R^n.
\end{equation}

Let us prove that $u_0\in I_T(u_T)$ also implies $u_0(x) = \tilde{u}_0(x)$ for any $x\in X_T(u_T)$.
Fix any $x_0\in X_T(u_T)$.
By the definition of $X_T(u_T)$, there exists $z_0\in\R^n$ such that $u_T$ is differentiable at $z_0$ and
$x_0 = z_0 - T\, H_p(\nabla u_T(z_0))$.

By Proposition \ref{Prop unique min Hopf}, together with \eqref{step 2 IniData}, it follows that $x_0$ is the unique minimizer in Hopf-Lax formula \eqref{Hopf formula} with $x=z_0$ and initial data $u_0$ and $\tilde{u}_0$.
Hence, we deduce that
$$
u_T (z_0) = u_0(x_0) + T\, L\left( \dfrac{z_0-x_0}{T}\right) 
= \tilde{u}_0 (x_0) + T\, L\left( \dfrac{z_0-x_0}{T}\right),
$$
which implies $u_0(x_0) = \tilde{u}_0(x_0)$.
\end{proof}

We now prove a result that characterizes the set $X_T(u_T)$, introduced in Theorem \ref{Thm X_I charac}, 
as the set of points $x_0$ for which there exists a function of the form
$$
-T\, L \left( b + \dfrac{x_0 - x}{T}\right) + c, \qquad \text{with} \ b\in\R^n, \ c\in\R,
$$
touching $\tilde{u}_0$ from below at $x_0$.
Note that Theorem \ref{Thm X_I charac} in Section \ref{Sec Results} is a particular case of the following proposition.
However, the assumption \eqref{quadratic Hamiltonian} in Theorem \ref{Thm X_I charac} allows an important simplification of the statement.

\begin{proposition}\label{Prop X_I charac general}
Let $H$ satisfy \eqref{CondH} and $T>0$.
Let $u_T\in \Lip(\R^n)$ be such that $I_T(u_T)\neq \emptyset$ and take any function $u_0\in I_T(u_T)$.
Then, for any $x_0\in\R^n$, the following two statements are equivalent:
\begin{enumerate}
\item $x_0\in X_T(u_T)$;
\item There exist $p_0\in D^- u_0 (x_0)$ and $c_0\in \R$ such that 
$$
u_0 (x_0) = - T \, L\left( H_p(p_0)\right) + c_0 \qquad \text{and}
$$
$$
u_0 (x) > - T \, L\left( H_p(p_0) + \dfrac{x_0-x}{T}\right) + c_0, \qquad \forall x\in \R^n\setminus \{x_0\}.
$$
\end{enumerate}
\end{proposition}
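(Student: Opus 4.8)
The plan is to prove both implications by reducing condition (ii) to the statement that a suitable reference point $z_0$ is a point at which the Hopf--Lax minimizer for $u_0$ is unique, and then to invoke Proposition \ref{Prop unique min Hopf}, which ties uniqueness of the minimizer to differentiability of $S_T^+ u_0 = u_T$. The bridge between the two formulations is the change of reference point $z_0 := x_0 + T\,H_p(p_0)$ together with the Legendre duality identity $\nabla L(H_p(p)) = p$, valid under \eqref{CondH} since $L=H^\ast$ is $C^2$ and $H_{pp}>0$. With this choice one has $\frac{z_0 - x}{T} = H_p(p_0) + \frac{x_0 - x}{T}$, so the touching function in (ii) is exactly the Hopf--Lax weight centered at $z_0$ shifted by a constant, namely $\phi(x) := -T\,L\!\left(\frac{z_0 - x}{T}\right) + c_0$.

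For the implication (ii) $\Rightarrow$ (i), I would read the two displayed relations as saying that $u_0(y) + T\,L\!\left(\frac{z_0 - y}{T}\right) \geq c_0$ for all $y$, with equality if and only if $y = x_0$. Hence $S_T^+ u_0(z_0) = c_0$ and the minimum in \eqref{Hopf formula} at $z_0$ is attained only at $x_0$. Since $S_T^+ u_0 = u_T$, Proposition \ref{Prop unique min Hopf} yields that $u_T$ is differentiable at $z_0$ and that the unique minimizer equals $z_0 - T\,H_p(\nabla u_T(z_0))$; comparing with $x_0$ gives $x_0 = z_0 - T\,H_p(\nabla u_T(z_0))$, that is $x_0 \in X_T(u_T)$. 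This direction does not actually use the membership $p_0 \in D^- u_0(x_0)$, which turns out to be automatic.

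For the converse (i) $\Rightarrow$ (ii), the definition of $X_T(u_T)$ provides a point $z_0$ where $u_T$ is differentiable with $x_0 = z_0 - T\,H_p(\nabla u_T(z_0))$. Applying Proposition \ref{Prop unique min Hopf} to $\tilde{u}_0 := S_T^- u_T$ (recall $S_T^+\tilde{u}_0 = u_T$ by Theorem \ref{Thm 1st reach cond}) shows that $x_0$ is the unique Hopf--Lax minimizer at $z_0$, so
$$
\tilde{u}_0(x_0) + T\,L\!\left(\tfrac{z_0 - x_0}{T}\right) = u_T(z_0), \qquad \tilde{u}_0(y) + T\,L\!\left(\tfrac{z_0 - y}{T}\right) > u_T(z_0) \ \ (y\neq x_0).
$$
I would then transfer these to $u_0$ via Theorem \ref{Thm IniData Identif}: since $u_0 \geq \tilde{u}_0$ everywhere and $u_0(x_0) = \tilde{u}_0(x_0)$ (because $x_0 \in X_T(u_T)$), the equality persists at $x_0$ and the strict inequality persists for $y \neq x_0$. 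Setting $c_0 := u_T(z_0)$ and $p_0 := \nabla u_T(z_0)$, and using $\frac{z_0 - x_0}{T} = H_p(p_0)$, recovers precisely the two displayed relations of (ii).

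It remains to check $p_0 \in D^- u_0(x_0)$. Here I would observe that the smooth function $\phi(x) = -T\,L\!\left(H_p(p_0) + \frac{x_0 - x}{T}\right) + c_0$ touches $u_0$ from below at $x_0$ with equality there, so $\nabla\phi(x_0) \in D^- u_0(x_0)$; a direct differentiation (the chain rule contributing a factor $-\tfrac1T$ that cancels the prefactor $-T$) gives $\nabla\phi(x_0) = \nabla L(H_p(p_0))$, which equals $p_0$ by the duality identity. The main obstacle, and the only place requiring genuine care, is exactly this gradient identification via $\nabla L\circ H_p = \mathrm{id}$, together with the bookkeeping of the substitution $z_0 = x_0 + T\,H_p(p_0)$ that converts the $p_0$-dependent touching surface into a pure Hopf--Lax weight; once these are in place, both implications follow mechanically from Proposition \ref{Prop unique min Hopf} and Theorem \ref{Thm IniData Identif}.
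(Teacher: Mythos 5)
Your proof is correct and takes essentially the same route as the paper: both implications rest on Proposition \ref{Prop unique min Hopf} via the change of reference point $z_0 = x_0 + T\,H_p(p_0)$, the reading of condition (ii) as uniqueness of the Hopf--Lax minimizer at $z_0$, and the Legendre identity $L_q\circ H_p = \mathrm{id}$ from Property \ref{Prop Legendre} (both for identifying $x_0\in X_T(u_T)$ and for checking $p_0\in D^-u_0(x_0)$). The only deviation is in (i) $\Rightarrow$ (ii), where the paper applies Proposition \ref{Prop unique min Hopf} directly to $u_0$ (legitimate since $S_T^+u_0 = u_T$), while you apply it to $\tilde u_0 = S_T^- u_T$ and then transfer the equality and strict inequality to $u_0$ via Theorem \ref{Thm IniData Identif}; this detour is valid but not needed.
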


Here we recall the definition of subdifferential of a function $u_0\in\Lip(\R^n)$:
\begin{equation}\label{Def subdiff}
D^-u_0 (x_0) = \left\{ p\in\R^n\, ; \ \exists \varphi\in C^1(\R^n), \ \nabla\varphi(x_0)=p, \ u_0-\varphi \geq 0, \ (u_0-\varphi)(x_0)=0\right\}.
\end{equation}

\begin{proof}
Let $u_0$ be any initial condition in $I_T(u_T)$,
i.e. $u_0$ satisfies $S_T^+ u_0 =u_T$. 
Let us recall the definition of $X_T(u_T)$ from Theorem \ref{Thm IniData Identif}:
$$
X_T(u_T) = \left\{ z-T\, H_p (\nabla_x u_T(z))\, ; \ \forall z\in\R^n\ \text{such that}\ u_T \ \text{is differentiable at z}\right\}.
$$

Consider any $x_0\in X_T (u_T)$. 
There exists $z_0\in\R^n$ such that $u_T$ is differentiable at $z_0$ and 
\begin{equation}\label{identity Prop proof}
x_0 = z_0-T\, H_p(\nabla_x u_T(z_0)).
\end{equation}
From Proposition \ref{Prop unique min Hopf}, we deduce that $x_0$ is the unique minimizer in
the Hopf-Lax formula \eqref{Hopf formula} with $x=z_0$.
Therefore, we have
$$
u_0 (x_0) = u_T(z_0) - T\, L\left(\dfrac{z_0-x_0}{T}\right)
$$
and
$$
u_0 (x) > u_T(z_0) - T\, L\left(\dfrac{z_0-x}{T}\right), \quad \text{for all} \ x\in\R^n\setminus\{x_0\}.
$$
Taking into account these two relations and the identity \eqref{identity Prop proof},
the statement (ii) follows with $c_0=u_T(z_0)$ and $p_0= \nabla_x u_T(z_0)$.
We observe that, in view of the definition of subdifferential in \eqref{Def subdiff},
we have $L_q(H_p(p_0))\in D^- u_0(x_0)$, and using the properties of the Legendre transform (see Property \ref{Prop Legendre} below),
it follows that $p_0\in D^- u_0(x_0)$.

Conversely, if statement (ii) holds, we see that $x_0$ is the unique minimizer in
formula \eqref{Hopf formula} with $x = z_0$, where $z_0=x_0 + T\, H_p(p_0)$.
From Proposition \ref{Prop unique min Hopf}, it follows that $u_T$ is differentiable at 
$z_0$ and $\nabla_x u_T(z_0) =p_0$. 
Then $x_0\in X_T(u_T)$.
\end{proof}

Here we recall the following elementary property of the Legendre transform that has been used in the previous proof. 
See for example Section A.2 in
\cite{cannarsa2004semiconcave} for the proof of this property.

\begin{property}\label{Prop Legendre}
Under the assumptions \eqref{CondH} on $H$, the function $L$ defined by
$$
L(q) = \max_{p\in\R^n} [ q\cdot p - H(p)]
$$
is a strictly convex $C^2$ function, and its gradient $L_q$ is a $C^1$ diffeomorphism from $\R^n$ to $\R^n$ satisfying
$$
H_p (p) = (L_q)^{-1} (p) \quad \text{and} \quad
\left[ H_{pp} (p)\right]^{-1} = L_{qq}\left( H_p(p)\right),\quad \text{for all} \ p\in\R^n.
$$
\end{property}

We end the section with the proof of Theorem \ref{Thm X_I charac}.

\begin{proof}[Proof of Theorem \ref{Thm X_I charac}]
The result is a particular case of Proposition \ref{Prop X_I charac general}.
Note that in this case $H$ is given by \eqref{quadratic Hamiltonian}.
Hence, we have
\begin{equation}\label{H_p and L(q) quadratic}
H_p (p) = A\, p \qquad \text{and} \qquad L(q) = H^\ast (q) = \dfrac{\langle A^{-1}q, q\rangle}{2}. 
\end{equation}

Consider any $u_0\in I_T(u_T)$.
By Proposition \ref{Prop X_I charac general}, if $x_0\in X_T(u_T)$,
then we have
$$
u_0 (x_0) = -T\, L(H_p(p_0))+c_0,
$$
and
$$
u_0 (x) > -T\, L\left( H_p(p_0) + \dfrac{x_0-x}{T}\right) + c_0 ,\quad \forall x\in \R^n\setminus \{x_0\},
$$
for some $p_0\in D^- u_0 (x_0)$ and $c_0\in \R$.
The identities in \eqref{H_p and L(q) quadratic} and a simple computation give
$$
u_0 (x_0) = -T\, \dfrac{\langle A\, p_0, p_0\rangle}{2} + c_0,
$$
and
$$
u_0 (x) > -\dfrac{\langle A^{-1}(x_0-x),\, x_0-x\rangle}{2T} - \langle p_0,\, x_0-x \rangle - T\, \dfrac{\langle A\, p_0,\, p_0\rangle}{2} + c_0,
\quad \forall x\in \R^n\setminus \{x_0\}.
$$
The statement \eqref{Thm X_I charac ii} in Theorem \ref{Thm X_I charac} then holds with
$$
b := \dfrac{1}{T} A^{-1} x_0 + p_0 \qquad \text{and} \qquad 
c := - \dfrac{\langle A^{-1}x_0,x_0\rangle}{2T} - \langle p_0,\, x_0\rangle 
- T\dfrac{\langle A\, p_0,\, p_0\rangle}{2} + c_0 .
$$

Now, let $x_0\in\R^n$ be such that condition \eqref{Thm X_I charac ii} in Theorem \ref{Thm X_I charac} holds for some
$b\in\R^n$ and $c\in \R$.
In view of the definition of subdifferential \eqref{Def subdiff}, we see that
$$
p_0:= -\dfrac{1}{T} A^{-1} x_0 + b\in D^- u_0 (x_0).
$$
With this choice of $p_0$ and taking $c_0 := u_0(x_0) + T\, \dfrac{\langle A\, p_0, p_0\rangle}{2}$,
it is not difficult to verify that condition (ii) in Proposition \ref{Prop X_I charac general} holds, and then $x_0\in X_T(u_T)$.

\end{proof}

\section{Semiconcave envelopes}\label{Sec semiconcave}

In this section, we study the following fully nonlinear obstacle problem
\begin{equation}\label{obstacle pbm Sec 4}
\min \left\{ v - u_T, \, 
- \lambda_n \left[ D^2 v- \frac{\left[ H_{pp} (D v) \right]^{-1}}{T}\right] \right\} = 0,
\end{equation}
where $H$ is a function satisfying \eqref{CondH}, and $u_T\in\Lip (\R^n)$ and $T>0$ are given.

We first prove Theorem \ref{Thm semiconcave envelope},
which ensures that, for the one-dimensional case and when $H$ is quadratic (i.e. given by \eqref{quadratic Hamiltonian}), the function
$u_T^\ast := S_T^+ (S_T^- u_T)$ is a viscosity solution of \eqref{obstacle pbm Sec 4}. 
This function is obtained by solving the terminal-value problem \eqref{Backward HamJac eq}
and then the initial-value problem \eqref{HamJac eq} with initial condition $S^-_T u_T$.
Afterwards, we will discuss the connection of equation \eqref{obstacle pbm Sec 4} with the notion of semiconcave envelope and we will give the proofs of Theorem \ref{Thm 2nd reach cond} and Corollary \ref{Cor reach cond}. 

\subsection{Proof of Theorem \ref{Thm semiconcave envelope}}

We start by recalling the definition of viscosity solution to problem \eqref{obstacle pbm Sec 4},
in terms of sub- and supersolutions.

\begin{definition}\label{Def vis sol obstacle}
Let $u_T\in \Lip(\R^n)$ be a given function. 
\begin{enumerate}
\item\label{Obstacle subsolution}
The upper semicontinuous function $v$ is a viscosity subsolution of \eqref{obstacle pbm Sec 4}
if for every twice-differentiable function $\phi(\cdot)$,
$$
\phi(x_0)-u_T(x_0)\leq 0 \quad \text{or}  \quad 
-\lambda_n\left[ D^2\phi(x_0) - \dfrac{\left[ H_{pp}(\nabla \phi(x_0)) \right]^{-1}}{T}\right] \leq 0,
$$
whenever $x_0$ is a local maximum of $v-\phi$ and $v(x_0) = \phi(x_0)$.

\item\label{Obstacle supersolution}
The lower semicontinuous function $v$ is a viscosity supersolution of \eqref{obstacle pbm Sec 4}
if for every twice-differentiable function $\phi(\cdot)$,
$$
\phi(x_0)-u_T(x_0) \geq 0 \quad \text{and} \quad 
-\lambda_n\left[ D^2\phi(x_0) - \dfrac{\left[  H_{pp}(\nabla \phi (x_0)) \right]^{-1}}{T}\right]\geq 0,
$$
whenever $x_0$ is a local minimum of $v-\phi$ and $v(x_0)=\phi(x_0)$.

\item\label{Obstacle solution}
The continuous function $v$ is a viscosity solution of \eqref{obstacle pbm Sec 4} if it is at the same time a viscosity super- and subsolution.
\end{enumerate}
\end{definition} 

See \cite{crandall1992user,katzourakis2014introduction} for more details on the theory of viscosity solutions,
and \cite{oberman2007convex} for the particular case of viscosity solutions to degenerate elliptic obstacle problems similar to \eqref{obstacle pbm Sec 4}.

\begin{proof}[Proof of Theroem \ref{Thm semiconcave envelope}]
Since most of the steps in the proof  are in common for the one-dimensional case and 
the quadratic case in any space dimension, we shall treat both cases at the same time, considering a general convex $H$ in any space-dimension.
We will treat both cases separately only when it is needed.
In both cases, the uniqueness of a viscosity solution
can be deduced from Remark \ref{Rmk concave Perron}.

Consider $n\geq 1$, $H$ satisfying \eqref{CondH} and let $L=H^\ast$.
Let us start by proving that $u_T^\ast := S_T^+ (S_T^- u_T)$ is a viscosity supersolution.
Fix $x_0\in\R^n$ and let $\phi$ be a twice-differentiable function such that
$u_T^\ast(x_0)=\phi(x_0)$ and $\phi(x) \leq u_T^\ast(x)$ for any $x\in B_\delta (x_0)$, with $\delta>0$ small.

Since $u_T^\ast(x_0)=\phi(x_0)$, using the inequality in Proposition \ref{Prop forward stability}
it follows that $\phi(x_0)\geq u_T(x_0)$.
By the definition of $u_T^\ast$ and formula \eqref{Hopf formula},
there exists $y^\ast\in\R^n$ such that
\begin{equation}\label{equality sec 4}
\phi(x_0) = u_T^\ast(x_0) = S_T^- u_T(y^\ast) + T\, L\left( \dfrac{x_0-y^\ast}{T} \right).
\end{equation}
In the other hand, using the equality in Proposition \ref{Prop forward stability}, together with formula \eqref{Backward Hopf formula}, we have
$$
S_T^- u_T (y^\ast) = S_T^- u_T^\ast (y^\ast) \geq u_T^\ast (x) - T\, L\left(\dfrac{x-y^\ast}{T} \right),
 \quad \text{for all} \ x\in\R^n,
$$
and this implies
$$
\phi(x) \leq u_T^\ast (x) \leq S_T^- u_T (y^\ast) + T\, L\left(\dfrac{x-y^\ast}{T}\right) \quad \text{for all} \
x\in B_\delta (x_0).
$$
This inequality, combined with \eqref{equality sec 4}, implies that the function
$$
\Psi (x): =  S_T^- u_T (y^\ast) + T\, L \left(\dfrac{x-y^\ast}{T}\right)
$$
satisfies $\phi(x_0)=\Psi(x_0)$ and $\phi(x)\leq \Psi(x)$ for all $x\in B_\delta (x_0)$.

Hence, we deduce that 
$\nabla \phi(x_0) = \nabla \Psi (x_0)$ 
and the Hessian matrix $D^2 (\phi(x_0)-\Psi(x_0))$ is semidefinite negative.

We then have
$$
\nabla \phi(x_0) = L_q\left(\dfrac{x_0-y^\ast}{T}\right), \quad \text{and} \quad
-\lambda_n \left[ D^2 \phi (x_0) - \dfrac{1}{T}L_{qq}\left(\dfrac{x_0-y^\ast}{T}\right)\right] \geq 0.
$$

Now, using Property \ref{Prop Legendre}, we obtain $\dfrac{x_0-y^\ast}{T} = H_p(\nabla \phi(x_0))$,
and then
$$
-\lambda_n \left[ D^2 \phi (x_0) - \dfrac{L_{qq}\left(H_p(\nabla \phi(x_0))\right)}{T}\right] \geq 0.
$$
Using again Property \ref{Prop Legendre}, we conclude
$$
-\lambda_n \left[ D^2 \phi (x_0) - \dfrac{\left[H_{pp}\left(\nabla \phi(x_0)\right)\right]^{-1}}{T}\right] \geq 0.
$$

Next, we prove that $u_T^\ast$ is also a viscosity subsolution.
Fix $x_0\in\R^n$ and let $\phi$ be a twice-differentiable function such that $u_T^\ast(x_0) = \phi(x_0)$
and $\phi(x)\geq u_T^\ast(x)$ for any $x\in B_\delta(x_0)$, with $\delta>0$ small.
We shall prove that 
$$
\text{if $-\lambda_n\left[D^2\phi(x_0) - \dfrac{\left[H_{pp}\left(\nabla \phi(x_0)\right)\right]^{-1}}{T}\right]> 0$, then $\phi(x_0) = u_T(x_0)$.}
$$

Let us suppose that $\lambda_n\left[D^2\phi(x_0) - \dfrac{\left[H_{pp}\left(\nabla \phi(x_0)\right)\right]^{-1}}{T}\right]<0$.
If we take $y_0 = x_0 - T\, H_p\left(\nabla \phi(x_0)\right)$,
using Property \ref{Prop Legendre}, we obtain
$$
\lambda_n \left[ D^2 \phi(x_0) - \dfrac{1}{T}L_{qq} \left(\dfrac{x_0-y_0}{T}\right)\right] <0.
$$
This implies that the function
$$
\psi (x) := \phi(x) - T\, L\left( \dfrac{x-y_0}{T}\right),
$$
is strictly concave in a neighbourhood of $x_0$.
In addition, by the choice of $y_0$,
we can use Property \ref{Prop Legendre} to prove that $\nabla\psi (x_0) =0$.

This implies that $\psi$ has a strict local maximum at $x_0$, 
i.e. there exist $\delta'>0$ small and a constant $C\in\R$, such that
$$
\psi (x_0) = C \quad \text{and} \quad \psi(x)<C \ \text{for all} \ x\in B_{\delta'}(x_0) \setminus \{x_0\}.
$$
Taking $\delta'' = \min \{\delta,\delta'\}$, 
since $\phi\geq u_T^\ast$ in $B_\delta (x_0)$ and $\phi (x_0) = u_T^\ast(x_0)$,
we deduce that the function
$$
\Psi (x) := u_T^\ast(x) - T\, L\left( \dfrac{x-y_0}{T}\right)
$$
also satisfies
\begin{equation}\label{near x0}
\Psi(x_0) = C \quad \text{and} \quad \Psi (x) < C, \ \text{for all} \ 
x\in B_{\delta''}(x_0)\setminus \{x_0\}.
\end{equation}

Now, we shall prove that for the one-dimensional case and for the case when $H$ is quadratic in any space-dimension,
$x_0$ is in fact the unique maximizer of $\Psi$ over $\R^n$.
Here, we treat both cases separately:

\textit{Case 1: One-dimensional case.}
Let us suppose, for a contradiction, that there exists $x_1\in \R\setminus\{x_0\}$ such that $\Psi(x_1)\geq \Psi(x_0)$.
Assume without loss of generality that $x_0<x_1$,
and let
$$x_2 := \underset{x\in [x_0,x_1]}{\text{argmin}} \Psi(x).$$
From \eqref{near x0}, we deduce that $x_2$ is an interior point of $[x_0,x_1]$
and satisfies $\Psi(x_2)<\Psi(x_0)$.

Then, using the definition of subdifferential in \eqref{Def subdiff},
it is not difficult to prove that $0\in D^- \Psi (x_2)$,
and by the definition of $\Psi$, we have
\begin{equation}\label{subdiff x_2}
L_q\left(\dfrac{x_2-y_0}{T}\right) \in D^- u_T^\ast (x_2).
\end{equation}

Now, observe that, by Proposition \ref{Prop semiconcave}, we have that
$u_T^\ast = S_T^+(S_T^- u_T)$ is a semiconcave function.
From a well-known property of semiconcave functions (Proposition 3.3.4 in \cite{cannarsa2004semiconcave}), the superdifferential of a semiconcave function is nonempty for any $x$.

Hence, we have
$$D^- u_T^\ast (x_2)\neq \emptyset \quad \text{and} \quad D^+ u_T^\ast (x_2)\neq \emptyset,$$
and by Proposition 3.1.5 in \cite{cannarsa2004semiconcave}, we deduce that $u_T^\ast$ is differentiable at $x_2$.
Indeed, in view of \eqref{subdiff x_2}, we have
$$
\nabla u_T^\ast (x_2) = L_q\left(\dfrac{x_2-y_0}{T}\right).
$$

Now, by Proposition \ref{Prop unique min Hopf}, 
the unique minimizer in the Hopf-Lax formula \eqref{Hopf formula}, with $u_0 = S_T^- u_T$ and $x=x_2$,
is given by
$$
y^\ast = x_2 - T\, H_p (\nabla u_T^\ast (x_2)) = x_2 - T\, H_p\left( L_q \left(\dfrac{x_2-y_0}{T}\right)\right) = y_0.
$$
Here we used Property \ref{Prop Legendre}.
Therefore, we have
$$
u_T^\ast (x_2) = S_T^- u_T (y_0) + T\, L\left(\dfrac{x_2-y_0}{T}\right).
$$
This implies that $\Psi (x_2)=S_T^- u_T(y_0)$, and using Proposition \ref{Prop forward stability}, we deduce
\begin{eqnarray*}
\Psi (x_2) &=& S_T^- u_T (y_0) = S_T^- u_T^\ast (y_0) \\
&=& \max_{x\in\R} \left\{ u_T^\ast (x) - T\, L\left(\dfrac{x-y_0}{T}\right) \right\} \\
&\geq &  u_T^\ast (x_0) - T\, L\left(\dfrac{x_0-y_0}{T}\right) = \Psi (x_0),
\end{eqnarray*}
and this contradicts $\Psi(x_2) < \Psi(x_0)$. We then deduce that $x_0$ is the unique maximizer of $\Psi$.

\textit{Case 2: $H$ quadratic.}
Suppose that $H$ is given by \eqref{quadratic Hamiltonian} for some positive definite matrix $A$.
As we have proved in the first part of this proof,
$u_T^\ast$ is a viscosity supersolution of \eqref{obstacle pbm Sec 4}.
Since in this case $H_{pp} (p)= A$ for all $p\in\R^n$.
We have that $u_T^\ast$ satisfies the inequality $-\lambda_n \left[D^2 u_T^\ast - \dfrac{A^{-1}}{T}\right] \geq 0$
in a viscosity sense. 

Then, since $L_{qq} (q) = A^{-1}$  for all $q\in\R^n$,
we deduce that $\Psi$ satisfies $-\lambda_n[D^2 \Psi]\geq 0$ in a viscosity sense,
and this implies that $\Psi$ is a concave function (see Theorem \ref{Thm Oberman convex} below).
From the concavity of $\Psi$, together with \eqref{near x0}, we obtain that $x_0$ is the unique maximizer of $\Psi$.

In both cases, we have proved that
\begin{equation}\label{not near x0}
\Psi(x_0) = C \quad \text{and} \quad \Psi (x) <C, \ \text{for all} \ 
x\in \R^n\setminus \{x_0\}.
\end{equation}

Therefore, we have
\begin{equation}\label{not near x0 2 1}
C = u_T^\ast (x_0) - T\, L\left(\dfrac{x_0-y_0}{T}\right)
\end{equation} 
and 
\begin{equation}\label{not near x0 2 2}
C> u_T^\ast (x) - T\, L\left(\dfrac{x-y_0}{T}\right),
\ \text{for all} \ x\in \R^n\setminus \{x_0\},
\end{equation}
and after the equality in Proposition \ref{Prop forward stability} and formula \eqref{Backward Hopf formula}, we obtain
\begin{equation}\label{equality uT uTstar C}
S^-_T u_T (y_0) = S_T^- u_T^\ast (y_0) = C.
\end{equation}
Finally, applying \eqref{not near x0 2 2} and the inequality in Proposition \ref{Prop forward stability}, we obtain
$$
S^-_T u_T(y_0) = C > u_T(x) - T\, L \left( \dfrac{x-y_0}{T}\right), \qquad \text{for all} \ x\in\R^n\setminus\{x_0\},
$$
which, combined with \eqref{equality uT uTstar C}, 
implies $u_T(x_0) - T\, L\left( \dfrac{x_0-y_0}{T}\right) =C$, since the maximum in formula \eqref{Backward Hopf formula} is always attained. 
The equality $\phi(x_0)=u_T^\ast(x_0) = u_T(x_0)$ then follows from \eqref{not near x0 2 1} and the choice of $\phi$.
\end{proof}

\begin{remark}\label{Rmk no general H in multiD}
Observe that in Case 1 of this proof, we used that in the one-dimensional case, 
between two local maxima of a continuous function there is always a local minimum. 
In higher dimension, this property is no longer true since there can be saddle points.
Therefore, the same argument cannot be used to treat the multidimensional case.
However, by assuming that the Hessian matrix of $H$ is constant over $\R^n$,
we can prove that $\Psi$ is concave and then deduce that $x_0$ is the unique maximizer of $\Psi$.
\end{remark}

\subsection{Semiconcave envelopes and proof of Theorem \ref{Thm 2nd reach cond}}\label{Sec semiconcave 1.2}
It is well known that twice-differentiable convex functions are characterized by the property that
the Hessian is everywhere positive semidefinite,
and analogously, twice-differentiable concave functions are those such that
the Hessian is everywhere negative semidefinite.
This result can be generalized to continuous functions by the following result
proved by Oberman in \cite{oberman2007convex}.

\begin{theorem}[Oberman \cite{oberman2007convex}]\label{Thm Oberman convex}
\ 
\begin{enumerate}
\item The continuous function $v:\R^n \to \R$ is convex if and only if it is a viscosity solution of 
$\lambda_1 [D^2 v]\geq 0$.
\item The continuous function $v:\R^n \to \R$ is concave if and only if it is a viscosity solution of 
$\lambda_n [D^2 v]\leq 0$.
\end{enumerate}
\end{theorem}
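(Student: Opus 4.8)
The plan is to prove part (ii) and to obtain part (i) for free by duality. For a symmetric matrix $X$ one has $\lambda_1[X]=-\lambda_n[-X]$, so the substitution $v\mapsto -v$, $\phi\mapsto -\phi$ carries local maxima of $v-\phi$ to local minima of $(-v)-(-\phi)$ and turns $\lambda_1[D^2 v]\ge 0$ into $\lambda_n[D^2(-v)]\le 0$ (since $D^2(-v)=-D^2 v$ and $\lambda_n[-D^2 v]=-\lambda_1[D^2 v]$). As $v$ is convex exactly when $-v$ is concave, statements (i) and (ii) are equivalent. Hence I would only prove: a continuous $v\colon\R^n\to\R$ is concave if and only if it is a viscosity solution of $\lambda_n[D^2 v]\le 0$, where (Definition \ref{Def visc sol diff ineq}) this tests $C^2$ functions $\phi$ at local minima $x_0$ of $v-\phi$ and requires $\lambda_n[D^2\phi(x_0)]\le 0$ there.

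For the forward implication, assume $v$ concave, take $\phi\in C^2$ and a local minimum $x_0$ of $v-\phi$, normalized so $v\ge\phi$ near $x_0$ with equality at $x_0$. Suppose $\lambda_n[D^2\phi(x_0)]>0$ and let $\xi$ be a unit eigenvector for this largest eigenvalue. Then $k(t):=\phi(x_0+t\xi)$ is strictly convex near $t=0$, while $h(t):=v(x_0+t\xi)$ is concave (a concave function restricted to a line) and satisfies $h\ge k$ near $0$ with $h(0)=k(0)$. Letting $p$ be a supergradient of $h$ at $0$, concavity gives $h(0)+pt\ge h(t)$, strict convexity gives $k(t)>k(0)+k'(0)t$, and $h\ge k$ yields $h(0)+pt>h(0)+k'(0)t$, i.e. $(p-k'(0))t>0$ for all small $t\neq0$ of both signs, which is impossible. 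Hence $\lambda_n[D^2\phi(x_0)]\le 0$.

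The converse is the substantial direction, and I would argue by contradiction. If $v$ is not concave, then its restriction $g(s):=v(z+se)$ to some line fails to be concave, so on a subinterval $[\alpha,\beta]$ it dips strictly below the chord $a$ joining its endpoint values; thus $w:=g-a$ vanishes at $\alpha,\beta$ and attains a strictly negative interior minimum. I then add a small upward parabola $P(s):=-\tfrac{\delta}{2}(s-\alpha)(\beta-s)$, which satisfies $P\le 0$ on $[\alpha,\beta]$, $P(\alpha)=P(\beta)=0$ and $P''\equiv\delta>0$. For $\delta$ small, $w-P$ is still negative at the former minimizer while vanishing at the endpoints, so it has a negative interior minimum at some $s_\delta$; consequently $\tilde\phi:=a+P+c$ touches $g$ from below at $s_\delta$, for a suitable constant $c$, with $\tilde\phi''\equiv\delta>0$ along $e$. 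Finally I extend this one-dimensional contact to the test function $\Phi(x):=\tilde\phi(\langle x-z,e\rangle)-\tfrac{M}{2}\,|(I-e\otimes e)(x-z)|^2$, whose transverse penalty is designed to make $\bar x:=z+s_\delta e$ a genuine local minimum of $v-\Phi$; its Hessian there is $D^2\Phi(\bar x)=\delta\,e\otimes e-M(I-e\otimes e)$, with $\lambda_n[D^2\Phi(\bar x)]=\delta>0$, contradicting the supersolution inequality.

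The step requiring genuine care is this last extension: manufacturing an $n$-dimensional local minimum out of a purely one-dimensional contact while keeping the top eigenvalue strictly positive. For Lipschitz $v$ the transverse quadratic penalty with $M$ large works directly. For merely continuous $v$ the modulus of continuity can defeat a quadratic penalty near the contact point, so I would first replace $v$ by its inf-convolution $v_\eps(x)=\inf_y\bigl[v(y)+\tfrac{1}{2\eps}|x-y|^2\bigr]$, which is semiconcave and locally Lipschitz and remains a supersolution of $\lambda_n[D^2 v_\eps]\le 0$ (the equation being translation invariant and proper), run the contradiction on $v_\eps$ to conclude it is concave, and recover concavity of $v=\lim_{\eps\to 0}v_\eps$ as a locally uniform limit of concave functions. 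Producing strictly positive curvature from mere non-concavity (the perturbation $P$) and this transverse-penalty construction are the only delicate points; the rest is the soft comparison-on-a-line argument together with the duality reduction.
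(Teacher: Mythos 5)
First, note that the paper does not actually prove this statement: it is quoted as an external result of Oberman \cite{oberman2007convex}, so your proposal can only be measured against the standard proof there. Your duality reduction and the forward implication (concave $\Rightarrow$ supersolution) are correct, as is the one-dimensional step producing a touching parabola with strict curvature $\delta$ along the line. The genuine gap is the last step, where you promote the one-dimensional contact at the \emph{prescribed} point $\bar x = z + s_\delta e$ to an $n$-dimensional local minimum of $v-\Phi$ via the transverse penalty $-\tfrac{M}{2}|(I-e\otimes e)(x-z)|^2$. That penalty has zero gradient in the transverse directions, so the construction silently assumes $v$ has no transverse slope at $\bar x$; it already fails for the smooth non-concave function $v(x_1,x_2)=x_1^2+x_2$ with $e=e_1$, where $v-\Phi = \bigl(x_1^2-\tilde\phi(x_1)\bigr) + x_2 + \tfrac{M}{2}x_2^2$ is negative at $(s_\delta,t)$ for all small $t<0$, no matter how large $M$ is. Your claim that ``for Lipschitz $v$ the quadratic penalty with $M$ large works directly'' is false for the same reason: $\tfrac{M}{2}r^2 - Lr<0$ for $0<r<2L/M$, so a Lipschitz transverse dip defeats the penalty arbitrarily close to $\bar x$. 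Nor does inf-convolution repair this: $v(x_1,x_2)=x_1^2-|x_2|$ is locally Lipschitz and semiconcave (so it survives inf-convolution essentially unchanged), yet \emph{no} $C^2$ function can touch it from below at any point of the line $x_2=0$ (a lower test function $\phi$ would need $\partial_2\phi \le -1$ from the right and $\ge 1$ from the left at the concave kink), and on any other line the transverse-slope obstruction recurs; so your argument can never produce the contradiction for this $v$, even though $v$ does violate the supersolution property at points with $x_2\neq 0$.

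The fix — and this is what Oberman's argument actually does — is to stop prescribing the contact point and instead extract it from compactness. Keep your $\Phi(x) = a(\langle x-z,e\rangle) + P(\langle x-z,e\rangle) - \tfrac{M}{2}|(I-e\otimes e)(x-z)|^2$, but minimize $v-\Phi$ over the closed cylinder $\{\alpha \le \langle x-z,e\rangle\le\beta,\ |(I-e\otimes e)(x-z)|\le\rho\}$. On the axis the minimum value is at most $-\kappa:=\min_{[\alpha,\beta]}(w-P)<0$; on the two end faces $v-\Phi\ge-\omega(\rho)$ (using $P(\alpha)=P(\beta)=0$ and the local modulus of continuity $\omega$ of $v$), which beats $-\kappa$ once $\rho$ is small; on the lateral face $v-\Phi \ge -\kappa-\omega(\rho)+\tfrac{M}{2}\rho^2 > -\kappa$ once $M>2\omega(\rho)/\rho^2$. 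The crucial point is that the penalty only needs to beat the modulus at the single radius $|x^\perp|=\rho$, not for all small radii, which is exactly what makes the quantifiers close. The minimum is therefore attained at an interior point $\bar x$ — wherever it happens to lie, possibly off the axis and off any kink set — and there the supersolution property forces $\lambda_n[D^2\Phi(\bar x)]\le 0$, contradicting $D^2\Phi = \delta\, e\otimes e - M(I-e\otimes e)$, whose largest eigenvalue is $\delta>0$. With this interior-extremum-over-a-cylinder argument replacing your prescribed-contact construction, the rest of your outline (duality, forward direction, one-dimensional perturbation) assembles into a complete and correct proof, valid for merely continuous $v$ and with no need for the inf-convolution detour.
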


The definition of viscosity solution for these two inequalities is the same as Definition \ref{Def visc sol diff ineq} below, adapted in an obvious way.
In \cite{oberman2007convex}, it was proved that 
the convex envelope of a given function $f:\R^n \to\R$ is the viscosity solution of the obstacle problem
\begin{equation}\label{convex envelope eq Sec 4}
\max\{ v-f, \, -\lambda_1[D^2 v]\} = 0.
\end{equation}
In \cite{oberman2007convex}, the solution of \eqref{convex envelope eq Sec 4} is also identified with 
the value function of a stochastic optimal control problem.

Analogously to the equation for the convex envelope, the concave envelope of $f$ is the unique viscosity solution of 
$$
\min\{ v-f, \, -\lambda_n[D^2 v]\} = 0.
$$
For related literature concerning equations involving operators like $\lambda_1[\cdot]$ and $\lambda_n[\cdot]$ and its relation 
with convex/concave envelopes and with stochastic control theory
we refer  to \cite{birindelli2018family,blanc2019game,blanc2019games,fleming2006controlled,oberman2011dirichlet,oberman2008computing}
and the references therein.

\begin{figure}[h]
  \centering
\includegraphics[width=6cm, height=4cm]{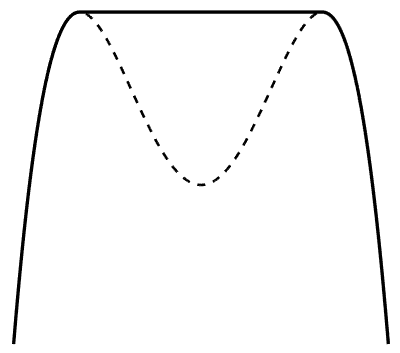}
 \caption{A function $f$, represented by a dotted line, and its concave envelope $f^\ast$.}
\label{FigConcaveEnvelope}
\end{figure}

In our case, the viscosity solution $v$ of the obstacle problem \eqref{obstacle pbm Sec 4} is not necessarily concave.
However, if $H$ is given by \eqref{quadratic Hamiltonian}, the solution of \eqref{obstacle pbm Sec 4} satisfies the inequality
$$
\lambda_n \left[ D^2 v - \dfrac{1}{T}A^{-1}\right] \leq 0,
$$
in the viscosity sense of Definition \ref{Def visc sol diff ineq}.
This implies, using Theorem \ref{Thm Oberman convex}, that the function
$$
x\longmapsto v(x) - \dfrac{\langle A^{-1}x, x\rangle}{2T}
$$
is concave.
Hence, applying the characterization of semiconcave functions in Proposition \ref{Prop semiconcave characterization},
we deduce that $v$ is semiconcave with linear modulus and constant
$$
C = \lambda_n\left[\dfrac{A^{-1}}{T}\right] = \dfrac{1}{T\lambda_1[A]}. 
$$

Next we prove that, when $H$ is quadratic, the solution to the problem \eqref{obstacle pbm Sec 4} can be given
in terms of the concave envelope of a certain auxiliary function $f$.
This result proves the conclusion of Corollary \ref{Cor concave envelope}.

\begin{lemma}\label{Lemma semiconcave concave}
Let $H$ be given by \eqref{quadratic Hamiltonian} for some definite positive $n\times n$ matrix $A$.
Let $T>0$ and $u_T\in \Lip(\R^n)$.
The function $v$ is the viscosity solution of \eqref{obstacle pbm Sec 4} if and only if
the function
$$
w(x) := v(x) - \dfrac{\langle A^{-1}x, x\rangle}{2T} 
$$
is the viscosity solution of 
\begin{equation}\label{concave envelope eq Sec 4}
\min \left\{ w - f, \, -\lambda_n[D^2 w]\right\} =0,
\end{equation}
where
$$
f(x):= u_T (x) - \dfrac{\langle A^{-1}x,x\rangle}{2T}. 
$$
\end{lemma}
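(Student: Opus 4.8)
The plan is to exploit the fact that, because $H$ is quadratic, the second-order term in \eqref{obstacle pbm Sec 4} loses its dependence on the gradient: by \eqref{quadratic Hamiltonian} we have $H_{pp}(p)=A$ for every $p\in\R^n$, so $[H_{pp}(Dv)]^{-1}/T\equiv A^{-1}/T$ and \eqref{obstacle pbm Sec 4} reduces to $\min\{v-u_T,\,-\lambda_n[D^2v-A^{-1}/T]\}=0$. The whole equivalence will then follow from a single change of the unknown that absorbs the constant matrix $A^{-1}/T$ into a fixed smooth quadratic. Concretely, I would set $q(x):=\langle A^{-1}x,x\rangle/(2T)$, so that $w=v-q$ and $f=u_T-q$, and record the two elementary identities on which everything rests: first, $q\in C^\infty(\R^n)$ with constant Hessian $D^2q\equiv A^{-1}/T$; second, $w-f=v-u_T$ pointwise. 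These are what make the two obstacle operators coincide after substitution.

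The core of the argument is then to transfer the property of being a viscosity sub/supersolution through Definition \ref{Def vis sol obstacle}. First I would, given a twice-differentiable test function $\phi$ for $v$, associate $\psi:=\phi-q$, which is again twice-differentiable, and note that $\phi\mapsto\psi$ is a bijection on the class of admissible test functions. Since $w-\psi=v-\phi$ identically, a point $x_0$ is a local maximum (resp.\ minimum) of $w-\psi$ with $w(x_0)=\psi(x_0)$ if and only if it is a local maximum (resp.\ minimum) of $v-\phi$ with $v(x_0)=\phi(x_0)$. Next I would check that the two inequalities match verbatim: $\phi(x_0)-u_T(x_0)=\psi(x_0)-f(x_0)$, and $D^2\phi(x_0)-A^{-1}/T=D^2\psi(x_0)$, so $-\lambda_n[D^2\phi(x_0)-A^{-1}/T]=-\lambda_n[D^2\psi(x_0)]$.

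Putting these together, the subsolution condition for $v$ in \eqref{obstacle pbm Sec 4} at $\phi$ becomes precisely the subsolution condition for $w$ in \eqref{concave envelope eq Sec 4} at $\psi$, and symmetrically for the supersolution conditions. Because upper/lower semicontinuity and continuity are preserved under adding the smooth function $q$, I would conclude that $v$ is a viscosity solution of \eqref{obstacle pbm Sec 4} if and only if $w$ is a viscosity solution of \eqref{concave envelope eq Sec 4}, which is the assertion of the lemma (and, combined with \eqref{convex envelope def}, yields the description of $u_T^\ast$ in Corollary \ref{Cor concave envelope}).

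This is essentially a bookkeeping computation, so I do not expect a genuine analytic obstacle. The only points requiring care are verifying that the gradient entering $H_{pp}(\nabla\phi)$ in Definition \ref{Def vis sol obstacle} plays no role here, which holds exactly because $A$ is constant, and matching the semicontinuity hypotheses so the equivalence transfers to solutions and not merely to smooth functions. A shorter but less self-contained alternative would be to invoke the uniqueness noted in Remark \ref{Rmk concave Perron} and compare both problems to the same concave-envelope equation; I nonetheless prefer the direct test-function substitution above.
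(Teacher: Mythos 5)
Your proposal is correct and follows essentially the same route as the paper's proof: a change of unknown absorbing the quadratic $q(x)=\langle A^{-1}x,x\rangle/(2T)$, combined with the test-function substitution $\phi\mapsto\phi\mp q$, using that $H_{pp}\equiv A$ is constant so the gradient dependence in Definition \ref{Def vis sol obstacle} disappears. The only cosmetic difference is that you organize the substitution as a bijection on test functions, handling sub- and supersolutions in both directions at once, whereas the paper writes out the supersolution case in one direction and invokes symmetry for the rest.
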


\begin{proof}
We recall that the definition of viscosity solution of \eqref{concave envelope eq Sec 4} is the same as
Definition \ref{Def vis sol obstacle} replacing $H$ by $0$ and $u_T$ by $f$.

We need to prove that $v$ is a viscosity supersolution (resp. subsolution) of  \eqref{obstacle pbm Sec 4}
if and only if $w$ is a viscosity supersolution (resp.subsolution) of \eqref{concave envelope eq Sec 4}.
We only give the proof for the equivalence of viscosity supersolution since the same argument works for the viscosity subsolutions.

Let $v$ be a viscosity supersolution of \eqref{obstacle pbm Sec 4}.
For any $x_0\in\R^n$, take $\phi$ any twice-differentiable function such that $w(x_0) = \phi (x_0)$ and 
$x_0$ is a local minimum for $w-\phi$.
Using the definition of $w$ in the enunciate of the lemma, this implies that
\begin{equation*}
v(x_0) - \left( \dfrac{\langle A^{-1}x_0, x_0\rangle}{2T} + \phi(x_0)\right) = 0
\end{equation*}
and
\begin{equation*}
0\leq v(x) - 
\underbrace{\left( \dfrac{\langle A^{-1}x, x\rangle}{2T} + \phi(x)\right)}_{\varphi (x)}, \quad \text{for all} \ x\in B_\delta (x_0),
\end{equation*}
for some $\delta>0$ small.
Observe that $\varphi$ is a twice differentiable function such that $v(x_0) = \varphi(x_0)$
and $x_0$ is a local minimum for $v-\varphi$.
Then, since $v$ is a viscosity supersolution of \eqref{obstacle pbm Sec 4}, we have
$$
\varphi (x_0) \geq u_T(x_0) \quad \text{and} \quad 
-\lambda_n \left[ D^2 \varphi (x_0) - \dfrac{A^{-1}}{T} \right] \geq 0.
$$
Using the definition of $f$ and the choice of $\varphi$, this implies that
$$
\phi (x_0) \geq u_T(x_0) - \dfrac{\langle A^{-1}x_0, x_0\rangle}{2T} = f(x_0)  \quad \text{and} \quad 
-\lambda_n \left[ D^2 \phi (x_0) \right] \geq 0.
$$
We have then proved that $w$ is a viscosity supersolution of \eqref{concave envelope eq Sec 4}.
Similarly, we can prove that if $w$ is a viscosity supersolution of \eqref{concave envelope eq Sec 4},
then $v$ is a viscosity supersolution of \eqref{obstacle pbm Sec 4}.
\end{proof}

We now turn our attention to the reachability condition given in Theorem \ref{Thm 2nd reach cond}:
if $H$ is quadratic or if the space dimension is 1,
then the target $u_T$ is reachable if and only if $u_T$ satisfies the inequality
\begin{equation}\label{diff ineq Sec 4}
\lambda_n \left[ D^2 u_T - \dfrac{\left[ H_{pp} (D u_T)\right]^{-1}}{T}\right] \leq 0
\end{equation}
in the viscosity sense.
Let us make precise the definition of viscosity solution of the above inequality.

\begin{definition}\label{Def visc sol diff ineq}
The upper semicontinuous function $u_T$ is a viscosity solution of
\eqref{diff ineq Sec 4} if for every twice-differentiable function $\phi(\cdot)$,
$$
\lambda_n \left[ D^2\phi (x) - \dfrac{\left[ H_{pp} (\nabla \phi (x)) \right]^{-1}}{T} \right] \leq 0, \quad 
\text{whenever $x$ is a local minimum of $u_T-\phi$}. 
$$
\end{definition}

Observe that it is equivalent to say that $u_T$ is a viscosity supersolution of \eqref{obstacle pbm Sec 4} 
(see \eqref{Obstacle supersolution} in Definition \ref{Def vis sol obstacle}).

\begin{proof}[Proof of Theorem \ref{Thm 2nd reach cond}]
The result can be deduced from Theorems \ref{Thm 1st reach cond} and \ref{Thm semiconcave envelope}.
Let $T>0$ be fixed.
From Theorem \ref{Thm 1st reach cond}, a function $u_T$
satisfies $I_T(u_T)\neq \emptyset$ if and only if 
$$u_T^\ast = S_T^+(S_T^- u_T) = u_T.$$
In view of Theorem \ref{Thm semiconcave envelope}, if $n=1$ or if $H$ is given by  \eqref{quadratic Hamiltonian},
this is equivalent to say that $u_T$ is the viscosity solution of the obstacle problem \eqref{obstacle pbm Sec 4}

Observe that, in view of Definition \ref{Def vis sol obstacle}, $u_T$ is always a viscosity subsolution of \eqref{obstacle pbm Sec 4}.
In order to be also a viscosity supersolution, it only needs to verify the inequality
$$\lambda_n \left[ D^2 u_T (x) - \dfrac{\left[ H_{pp} (D u_T)\right]^{-1}}{T} \right] \leq 0$$
in the viscosity sense of Definition \ref{Def visc sol diff ineq}.
We then deduce that $u_T$ is a viscosity solution of the obstacle problem \eqref{obstacle pbm Sec 4}
if and only if $u_T$ verifies this inequality in the viscosity sense.
\end{proof}

We end this section with the proof of Corollary \ref{Cor reach cond},
that links the reachability of a target $u_T$ with its semiconcavity constant.
At some point in the proof, it will be necessary to use the following elementary property from linear algebra:

\begin{property}\label{Prop eigenvalues}
For any two $n\times n$ symmetric matrices $X$ and $Y$, the following inequality holds:
$$
\lambda_1(X) + \lambda_n(Y) \leq \lambda_n (X+Y) \leq \lambda_n(X) + \lambda_n(Y).
$$
\end{property}

This property can be deduced from the formula
$$
\lambda_1 (X) = \inf_{p\in \R^n} \dfrac{\langle X\, p, p\rangle}{|p|^2}
\quad \text{and} \quad
\lambda_n (X) = \sup_{p\in \R^n} \dfrac{\langle X\, p, p\rangle}{|p|^2}.
$$

\begin{proof}[Proof of Corollary \ref{Cor reach cond}]
Let $u_T$ be a Lipschitz function such that $I_T(u_T)\neq \emptyset$.
From Theorem \ref{Thm 2nd reach cond} we have that $u_T$ is a viscosity solution of
$$
\lambda_n \left[ D^2 u_T - \dfrac{1}{T}A^{-1}\right] \leq 0.
$$
Hence, in view of Theorem \ref{Thm Oberman convex}, the function
$x\mapsto u_T(x) - \dfrac{\langle A^{-1}x, x\rangle}{2T}$
is concave.
Using the characterization of semiconcave functions in Proposition \ref{Prop semiconcave characterization},
we deduce that $u_T$ is semiconcave with linear modulus and constant
$\frac{1}{T\lambda_1 [A]}$.

Let us prove now the second statement.
Suppose that $u_T$ is semiconcave with linear modulus and constant 
$$C = \dfrac{1}{T\lambda_n[A]} = \lambda_1 \left[ \dfrac{A^{-1}}{T}\right].$$

Using the equivalence (i)-(ii) in Proposition \ref{Prop semiconcave characterization},
we have that the function
$$
f(x) := u_T (x) - \dfrac{\lambda_1 [A^{-1}]}{2T} |x|^2
$$
is concave.
Hence, applying Theorem \ref{Thm Oberman convex}, we have that $f$ is a viscosity solution of
$\lambda_n [ D^2 f] \leq 0$.

We then have that $u_T$ satisfies
\begin{equation}\label{cor proof ineq 1}
\lambda_n \left[ D^2 u_T - \dfrac{\lambda_1[A^{-1}]}{T} I_n \right] \leq 0
\end{equation}
in the viscosity sense, where $I_n$ is the $n\times n$ identity matrix.
Now, we combine  \eqref{cor proof ineq 1} with Property \ref{Prop eigenvalues}, 
along with the fact that $\lambda_1[c I_n] = c$, to obtain
\begin{eqnarray*}
0 &\geq & \lambda_n \left[ D^2 u_T - \dfrac{\lambda_1 [A^{-1}]}{T} I_n \right] \\
 &\geq & \lambda_n [D^2 u_T] + \lambda_1 \left[ -\dfrac{\lambda_1[A^{-1}]}{T} I_n\right] \\
 & = & \lambda_n [D^2 u_T] - \dfrac{\lambda_1 [A^{-1}]}{T}.
\end{eqnarray*}
Finally, combining this inequality again with Property \ref{Prop eigenvalues}, we obtain
\begin{eqnarray*}
\lambda_n \left[ D^2 u_T - \dfrac{1}{T} A^{-1} \right] &\leq & 
\lambda_n [D^2 u_T] + \dfrac{\lambda_n[-A^{-1}]}{T} \\
&=& \lambda_n [D^2 u_T] - \dfrac{\lambda_1[A^{-1}]}{T} \leq 0.
\end{eqnarray*}
And from Theorem \ref{Thm 2nd reach cond}, we conclude that $I_T(u_T)\neq \emptyset$.
\end{proof}

\section{Conclusions and perspectives}\label{Sec Conclusions}

The first goal in this work was to characterize the set of reachable targets $u_T$ for a time-evolution Hamilton-Jacobi equation of the form \eqref{HamJac eq}.
Using the inversion of the direction of time in the equation and the notion of backward viscosity solution
we proved, in Theorem \ref{Thm 1st reach cond}, 
that the reachable targets are the fix-points of the composition operator $S_T^+\circ S_T^-$
(see the precise definition of $S_T^+$ and $S_T^-$ in Section \ref{Sec Backward solutions}).
We note that this result can be extended, using similar arguments,
to Hamilton-Jacobi equations with a Hamiltonian depending on $x$, i.e.
\begin{equation*}
\partial_t u + H(x,D_x u) = 0.
\end{equation*}

In addition, for the one-dimensional case
and for quadratic Hamiltonians in any space-dimension
we characterize, in Theorem \ref{Thm 2nd reach cond}, the set of reachable targets as 
the viscosity solutions to the differential inequality
\begin{equation}\label{Reach Ineq Conclusion}
\lambda_n \left[ D^2 u_T - \dfrac{\left( H_{pp} (D u_T)\right)^{-1}}{T} \right] \leq 0.
\end{equation}

Since $H$ is assumed to be strictly convex (see hypothesis \eqref{CondH}), inequality \eqref{Reach Ineq Conclusion}
implies in particular that the reachable targets are semiconcave functions with linear modulus and constant
$$
C = \dfrac{1}{T\, K_{u_T}}, \qquad \text{where} \quad K_{u_T} = \min_{|p|\leq \Lip(u_T)} H_{pp}(p) >0.
$$
Here, $\Lip(u_T)$ is the Lipschitz constant of $u_T$.

Although the semiconcavity of the viscosity solutions is a well-known property of Hamilton-Jacobi equations,
we point out that inequality \eqref{Reach Ineq Conclusion} gives a necessary and sufficient condition
for the reachability of the target.
Inequality \eqref{Reach Ineq Conclusion} describes exactly the semiconcavity that is needed for a target to be reachable.
In one space dimension, inequality \eqref{Reach Ineq Conclusion} can be written as 
$$\partial_{xx} u_T - \left(T\, H_{pp} (\partial_x u_T)\right)^{-1} \leq 0,$$
and is the analogous to the one-sided Lipschitz condition for scalar conservation laws 
(see for example \cite{colombo2019initial,liard2019inverse}).
We recall that the transformation
\begin{equation*}
u (t,x)\longmapsto v(t,x):=\partial_x u (t,x),
\end{equation*}
transforms any viscosity solution of the Hamilton-Jacobi equation \eqref{HamJac eq} in one-space dimension, into an entropy solution of the scalar conservation law
$$
\partial_t v + \partial_x (H(v)) = 0.
$$

In higher dimension, the hypothesis of $H_{pp}(p)$ being constant seems to be crucial in our proof, 
and we do not know if the inequality \eqref{Reach Ineq Conclusion} also provides a necessary and sufficient reachability condition for general strictly convex Hamiltonians in dimension higher than 1.
For the case of a Hamiltonian $H(x,p)$ depending on $x$,
the existence of an inequality similar to \eqref{Reach Ineq Conclusion} 
characterizing the set of reachable targets is a probably difficult problem.

Our second goal in this work was to construct, for any reachable target $u_T$,
the set of all the initial conditions $u_0$ satisfying $S_T^+ u_0 =u_T$.
In Theorem \ref{Thm IniData Identif}, we proved that this construction can be carried out by obtaining two elements:
\begin{enumerate}
\item the function $\tilde{u}_0 := S_T^- u_T$, which in fact happens to be the smallest initial condition
satisfying $S_T^+ u_0 = u_T$;
\item and the set $X_T(u_T)\subset \R^n$, obtained from the differentiability points of $u_T$, where all the initial conditions $u_0$ satisfying $S_T^+ u_0 = u_T$ coincide.
\end{enumerate}
A similar construction was done recently in \cite{colombo2019initial} for the one-dimensional case,
using the equivalence with scalar conservation laws.
However, our approach is based only on the Hamilton-Jacobi setting and applies to any space-dimension.
We stress that we also give, in Theorem \ref{Thm X_I charac} and Proposition \ref{Prop X_I charac general}, 
a rather geometrical identification of the points in $X_T(u_T)$,
which allows to construct the set $X_T(u_T)$ without the necessity of knowing the differentiability points of $u_T$.

The problem of identifying initial sources has been addressed in the context of many different time-evolution models,
and has a relevant interest from both a theoretical and a practical view-point. 
We refer to \cite{casas2015sparse,li2014heat} for works on initial data identification for parabolic equations,
where the fast decay of the solution and the regularizing effect make difficult the reconstruction of the solution at the initial time, specially if the time horizon $T$ is large.
In this setting, the usual approach is to approximate the final target
by considering initial sources constituted by a finite number of Dirac deltas,
where the location of the Dirac deltas as well as the weight for each of them must be carefully determined.

Finally, our last main result concerns the composition operator $S_T^+\circ S_T^-$, which applied to any target $u_T$ (reachable or not), represents a projection of $u_T$ on the set of reachable targets.
For the one-dimensional case and for the case of a quadratic Hamiltonian in any space-dimension
we prove, in Theorem \ref{Thm semiconcave envelope}, that the function $u_T^\ast := S_T^+(S_T^- u_T)$ is the unique viscosity solution to the fully nonlinear obstacle problem
\begin{equation}\label{Obstacle Conclusion}
\min \left\{ v - u_T, \, 
- \lambda_n \left[ D^2 v- \frac{\left[ H_{pp} (D v) \right]^{-1}}{T}\right] \right\} = 0.
\end{equation}
In analogy with the notion of concave envelope of a function, we call the solution to \eqref{Obstacle Conclusion}
the semiconcave envelope of $u_T$.
Note that, if we make $T$ go to infinity, equation \eqref{Obstacle Conclusion} can be viewed as an approximation of the equation for the concave envelope \eqref{concave envelope eq}.

For a fixed $T>0$, the projection $u_T^\ast$ is in fact the smallest reachable target bounded from below by $u_T$.
For the case of a general strictly convex Hamiltonian in any space-dimension, we have not been able to prove the validity of Theorem \ref{Thm semiconcave envelope},
and the existence of an equation similar to \eqref{Obstacle Conclusion} for the case of a more general Hamiltonian $H(x,p)$ depending on $x$ is unknown up to the best of our knowledge.
We would like to end this discussion by noting that projections like $S_T^+\circ S_T^-$, obtained by a backward-forward resolution of the equation, might be of interest in time-evolution problems different from Hamilton-Jacobi equations,
and its identification with the solution of an equation like \eqref{Obstacle Conclusion} could be useful to better understand the nature and properties of this projection.

\bibliographystyle{abbrv}
\bibliography{mybibfile-HJ}

\begin{thebibliography}{10}

\bibitem{alvarez1999hopf}
O.~Alvarez, E.~N. Barron, and H.~Ishii.
\newblock {H}opf-{L}ax formulas for semicontinuous data.
\newblock {\em Indiana University Mathematics Journal}, pages 993--1035, 1999.

\bibitem{auroux2005back}
D.~Auroux and J.~Blum.
\newblock Back and forth nudging algorithm for data assimilation problems.
\newblock {\em Comptes Rendus Mathematique}, 340(12):873--878, 2005.

\bibitem{auroux2008nudging}
D.~Auroux and J.~Blum.
\newblock A nudging-based data assimilation method: the back and forth nudging
  (bfn) algorithm.
\newblock 2008.

\bibitem{bardi2008optimal}
M.~Bardi and I.~Capuzzo-Dolcetta.
\newblock {\em Optimal control and viscosity solutions of
  {H}amilton-{J}acobi-{B}ellman equations}.
\newblock Springer Science \& Business Media, 2008.

\bibitem{bardi1984hopf}
M.~Bardi and L.~C. Evans.
\newblock On {H}opf's formulas for solutions of {H}amilton-{J}acobi equations.
\newblock {\em Nonlinear Analysis: Theory, Methods \& Applications},
  8(11):1373--1381, 1984.

\bibitem{barles1987uniqueness}
G.~Barles.
\newblock Uniqueness for first-order {H}amilton-{J}acobi equations and {H}opf
  formula.
\newblock {\em Journal of differential equations}, 69(3):346--367, 1987.

\bibitem{barles1994solutions}
G.~Barles.
\newblock Solutions de viscosit{\'e} des {\'e}quations de {H}amilton-{J}acobi.
\newblock {\em Collection SMAI}, 1994.

\bibitem{barles2013introduction}
G.~Barles.
\newblock An introduction to the theory of viscosity solutions for first-order
  {H}amilton--{J}acobi equations and applications.
\newblock In {\em Hamilton-Jacobi equations: approximations, numerical analysis
  and applications}, pages 49--109. Springer, 2013.

\bibitem{barron1999regularity}
E.~Barron, P.~Cannarsa, R.~Jensen, and C.~Sinestrari.
\newblock Regularity of {H}amilton--{J}acobi equations when forward is
  backward.
\newblock {\em Indiana University mathematics journal}, pages 385--409, 1999.

\bibitem{birindelli2018family}
I.~Birindelli, G.~Galise, and H.~Ishii.
\newblock A family of degenerate elliptic operators: maximum principle and its
  consequences.
\newblock In {\em Annales de l'Institut Henri Poincar{\'e} C, Analyse non
  lin{\'e}aire}, volume~35, pages 417--441. Elsevier, 2018.

\bibitem{blanc2019game}
P.~Blanc and J.~D. Rossi.
\newblock {\em Game Theory and Partial Differential Equations}, volume~31.
\newblock Walter de Gruyter GmbH \& Co KG, 2019.

\bibitem{blanc2019games}
P.~Blanc and J.~D. Rossi.
\newblock Games for eigenvalues of the {H}essian and concave/convex envelopes.
\newblock {\em Journal de Math{\'e}matiques Pures et Appliqu{\'e}es},
  127:192--215, 2019.

\bibitem{cannarsa2004semiconcave}
P.~Cannarsa and C.~Sinestrari.
\newblock {\em Semiconcave functions, {H}amilton-{J}acobi equations, and
  optimal control}, volume~58.
\newblock Springer Science \& Business Media, 2004.

\bibitem{casas2015sparse}
E.~Casas, B.~Vexler, and E.~Zuazua.
\newblock Sparse initial data identification for parabolic pde and its finite
  element approximations.
\newblock {\em Mathematical Control \& Related Fields}, 5(3):377, 2015.

\bibitem{claudel2011convex}
C.~G. Claudel and A.~M. Bayen.
\newblock Convex formulations of data assimilation problems for a class of
  {H}amilton--{J}acobi equations.
\newblock {\em SIAM Journal on Control and Optimization}, 49(2):383--402, 2011.

\bibitem{colombo2019initial}
R.~Colombo and V.~Perrollaz.
\newblock Initial data identification in conservation laws and
  {H}amilton-{J}acobi equations.
\newblock {\em arXiv preprint arXiv:1903.06448}, 2019.

\bibitem{crandall1992user}
M.~G. Crandall, H.~Ishii, and P.-L. Lions.
\newblock User’s guide to viscosity solutions of second order partial
  differential equations.
\newblock {\em Bulletin of the American mathematical society}, 27(1):1--67,
  1992.

\bibitem{crandall1983viscosity}
M.~G. Crandall and P.-L. Lions.
\newblock Viscosity solutions of {H}amilton-{J}acobi equations.
\newblock {\em Transactions of the American mathematical society},
  277(1):1--42, 1983.

\bibitem{evans2010partial}
L.~C. Evans.
\newblock {\em Partial differential equations}, volume~19.
\newblock American Mathematical Soc., 2010.

\bibitem{fleming2006controlled}
W.~H. Fleming and H.~M. Soner.
\newblock {\em Controlled {M}arkov processes and viscosity solutions},
  volume~25.
\newblock Springer Science \& Business Media, 2006.

\bibitem{ghil1991data}
M.~Ghil and P.~Malanotte-Rizzoli.
\newblock Data assimilation in meteorology and oceanography.
\newblock In {\em Advances in geophysics}, volume~33, pages 141--266. Elsevier,
  1991.

\bibitem{katzourakis2014introduction}
N.~Katzourakis.
\newblock {\em An introduction to viscosity solutions for fully nonlinear {PDE}
  with applications to calculus of variations in $L^\infty$}.
\newblock Springer, 2014.

\bibitem{li2014heat}
Y.~Li, S.~Osher, and R.~Tsai.
\newblock Heat source identification based onconstrained minimization.
\newblock {\em Inverse Problems and Imaging}, 8(1):199--221, 2014.

\bibitem{liard2019inverse}
T.~Liard and E.~Zuazua.
\newblock Inverse design for the one-dimensional {B}urgers equation.
\newblock 2019.

\bibitem{lions1982generalized}
P.-L. Lions.
\newblock {\em Generalized solutions of {H}amilton-{J}acobi equations},
  volume~69.
\newblock London Pitman, 1982.

\bibitem{misztela2020initial}
A.~Misztela and S.~Plaskacz.
\newblock An initial condition reconstruction in {H}amilton-{J}acobi equations.
\newblock {\em arXiv preprint arXiv:2001.00864}, 2020.

\bibitem{miyakoda1978initialization}
K.~Miyakoda, R.~Strickler, and J.~Chludzinski.
\newblock Initialization with the data assimilation method.
\newblock {\em Tellus}, 30(1):32--54, 1978.

\bibitem{oberman2007convex}
A.~Oberman.
\newblock The convex envelope is the solution of a nonlinear obstacle problem.
\newblock {\em Proceedings of the American Mathematical Society},
  135(6):1689--1694, 2007.

\bibitem{oberman2011dirichlet}
A.~Oberman and L.~Silvestre.
\newblock The {D}irichlet problem for the convex envelope.
\newblock {\em Transactions of the American Mathematical Society},
  363(11):5871--5886, 2011.

\bibitem{oberman2008computing}
A.~M. Oberman.
\newblock Computing the convex envelope using a nonlinear partial differential
  equation.
\newblock {\em Mathematical Models and Methods in Applied Sciences},
  18(05):759--780, 2008.

\end{thebibliography}

\end{document}